\newcommand{\todo}[1]{}
\newcounter{ContadorComentario}
\newtheorem{theorem}{Theorem}
\newtheorem{corollary}[theorem]{Corollary}
\newtheorem{lemma}[theorem]{Lemma}
\newtheorem{claim}[theorem]{Claim}
\newtheorem{remark}[theorem]{Remark}
\newenvironment{proofClaim}[1]{\noindent {\em Proof of Claim~$\ref{#1}$.}}{\hfill$\blacksquare$\bigskip}
\newcommand{\sep}{\mathcal S}
\renewcommand{\top}[2]{\overset{\frown}{{\cal #1}^{#2}}}
\title{On restricted completions of chordal and trivially perfect graphs}
\author{Mitre C. Dourado$^{a,}$\thanks{Partially supported by Conselho Nacional de Desenvolvimento Científico e Tecnológico, Brazil, Grant number 305404/2020-2.}, Luciano N. Grippo$^{b,}$\thanks{Partially supported by ANPCyT PICT 2017-1315 and Programa Regional MATHAMSUD MATH190013.} and Mario Valencia-Pabon$^c$}
\begin{document}

\onehalfspace
\maketitle

\begin{center}
{\small
$^a$Instituto de Computação, Universidade Federal do Rio de Janeiro, \\ Rio de Janeiro, Brazil. E-mail:mitre@ic.ufrj.br\\
$^b$Instituto de Ciencias, Universidad Nacional de General Sarmiento, Argentina. E-mail:lgrippo@campus.ungs.edu.ar \\
$^c$LIPN, Universit\'{e} Paris-13, Sorbonne Paris Cit\'{e}, CNRS UMR7030, Villataneuse, France. \\E-mail:valencia@lipn.univ-paris13.fr
}
\end{center}

\begin{abstract}
Let $G$ be a graph having a vertex $v$ such that $H = G - v$ is a trivially perfect graph. We give a polynomial-time algorithm for the problem of deciding whether it is possible to add at most $k$ edges to $G$ to obtain a trivially perfect graph. This is a slight variation of the well-studied {\sc Edge Completion}, also known as {\sc Minimum Fill-In}, problem. We also show that if $H$ is a chordal graph, then the problem of deciding whether it is possible to add at most $k$ edges to $G$ to obtain a chordal graph is \NP-complete.
\end{abstract}

\section{Introduction}

\todo{incluir CNPq.}

We consider finite, simple, and undirected graphs. A {\em chordal graph} is a graph with no induced cycles of length at least four. A graph $G$ is {\em trivially perfect} if for every induced subgraph $G'$ of $G$, the cardinality of a maximum independent set of $G'$ is equal to the number of maximal cliques of $G'$. A {\em split graph} is a graph whose vertex set can be partitioned into an independent set and a clique. Given as input a graph $G$ and an integer $k$, the problem of deciding whether it is possible to add at most $k$ edges to $G$ to obtain a graph belonging to a graph class $\Pi$ can be formalized as follows.

\bigskip
\noindent {\sc $\Pi$ completion}\\
\begin{tabular}{lp{14cm}}
	Input: & A graph $G$ and an integer $k$.\\
	Question: & Is there a graph $H \in \Pi$ such that $V(H) = V(G)$, $E(G) \subseteq E(H)$ and $|E(H)| - |E(G)| \le k$?
\end{tabular}
\bigskip

The {\sc Chordal completion} problem was proved to be \NP-complete in~\cite{Y1981}. There are polynomial-time approximation algorithms~\cite{NSS2000}, exponential-time exact algorithms~\cite{FKT2004,FV2008}, and parameterized algorithms~\cite{BHV2011,C1996,DFPV2015,KST1994} for this problem.
The {\sc Trivially perfect completion} problem is also known to be \NP-complete~\cite{NG2013} and a polynomial kernel for this problem was presented in~\cite{DP2018}. We consider the following slight variation of this problem.

\bigskip
\noindent {\sc $\Pi$ $p$-completion}\\
\begin{tabular}{lp{14cm}}
	Input: & A graph $G$ having a set $T \subseteq V(G)$ such that $|T| \le p$ and $G - T \in \Pi$ and an integer $k$.\\
	Question: & Is there a graph $H \in \Pi$ such that $V(H) = V(G)$, $E(G) \subseteq E(H)$ and $|E(H)| - |E(G)| \le k$?
\end{tabular}
\bigskip

Given graphs $G$ and $H$ and a graph class $\Pi$, we say that $H$ is a {\em $\Pi$ completion of $G$} if  $G$ is a spanning subgraph of $H$ and $H \in \Pi$. Every edge of $E(H) \setminus E(G)$ is called a {\em fill edge} and we denote $fill(H) = |E(H)| - |E(G)|$. Note that for any positive integer $p$, the {\sc $\Pi$ $p$-completion} problem is a restriction of the {\sc $\Pi$ completion} problem in the sense that an input for {\sc $\Pi$ $p$-completion} is also an input for {\sc $\Pi$ completion}.

We remark that the input of the {\sc $\Pi$ $p$-completion} problem is formed only by a graph $G$ and an integer $k$, i.e., to know more than one set $T \subset V(G)$ such that $|T| \le p$ and $G - T \in \Pi$ can help in the search for a minimum $\Pi$ completion of $G$, since each such set has a different structure that makes the graph $G$ not to belong to $\Pi$, however, this does not change the answer because the question of the problem asks only about $G$ and $k$. We also remark that the case $p=1$ was proposed in~\cite{LMP10} with another formulation and this is the problem we investigate in this work. In fact, the definition given in~\cite{LMP10} has as input a graph $G \in $ {\sc Cograph} and asks for a minimum {\sc Cograph} completion of the graph obtained by adding one vertex $v$ to $G$ (jointly with some edges incident to $v$), which, as observed above, is equivalent to the {\sc {\sc Cograph} $1$-completion} problem.

The text is organized as follows. In Section~\ref{sec:chordal}, we show that {\sc Chordal $1$-completion} is \NP-complete. We also prove that given a graph $G$ having a vertex $v$ such that $G - v$ is a split graph and an integer $k$, it is \NP-complete to decide whether $G$ has chordal completion with at most $k$ edges. In Section~\ref{sec:TP}, we present an algorithm for solving the {\sc Trivially perfect $1$-completion} problem in polynomial time.

We conclude this section introducing some notations. For a natural number $k$, we denote $\{1, \ldots, k\}$ by $[k]$.
We use $H \subset G$ to say that $H$ is a proper subgraph of $G$.
The neighborhood of $v \in V(G)$ is denoted by $N_G(v)$.

\section{Chordal and split graphs} \label{sec:chordal}

Consider a graph $G$. A set of vertices $S$ of $G$ is said to be a \emph{vertex separator} if there exist two vertices $v$ and $w$ of $G$ such that $v$ and $w$ are in distinct connected component of $G-S$. A minimal separator set is called \emph{minimal} if it is minimal under inclusion. We will denote by $\sep (G)$ the set of all minimal vertex separators of $G$.

\begin{theorem} {\em \cite{Gol04}}\label{thm:chordal-charact}
Let G be a graph. The graph $G$ is chordal if and only if every minimal vertex separator
set is a clique.
\end{theorem}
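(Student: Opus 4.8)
The plan is to prove both implications directly from the definitions: the forward direction by extracting a chordless cycle from a non-clique minimal separator, and the backward direction by locating a minimal separator inside a long induced cycle.

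For the direction assuming $G$ chordal, I would start from a minimal separator $S$ that separates two vertices $a$ and $b$, and let $C_a$ and $C_b$ be the connected components of $G - S$ containing $a$ and $b$ respectively. The key structural fact I would establish first is that $S$ is \emph{full} with respect to both components, i.e.\ every vertex of $S$ has a neighbor in $C_a$ and a neighbor in $C_b$; otherwise removing such a vertex from $S$ would still separate $a$ from $b$, contradicting minimality. Given two distinct vertices $x, y \in S$, I would then take a shortest $x$--$y$ path $P_a$ whose internal vertices all lie in $C_a$ and a shortest $x$--$y$ path $P_b$ whose internal vertices all lie in $C_b$; these exist because $C_a$ and $C_b$ are connected and $x, y$ each have a neighbor in both. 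Assuming for contradiction that $x \not\sim y$, the union $P_a \cup P_b$ is a cycle of length at least $4$, and it is induced: there are no edges between $C_a$ and $C_b$ since they are distinct components of $G - S$, and each $P_i$, being a shortest path, is chordless. This contradicts chordality, so $x \sim y$ and $S$ is a clique.

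For the converse, assume every member of $\sep(G)$ is a clique and suppose toward a contradiction that $G$ has an induced cycle $v_1 v_2 \cdots v_\ell v_1$ with $\ell \ge 4$. Since the cycle is induced and has length at least four, $v_1$ and $v_3$ are non-adjacent, hence separable, and I would fix a minimal $v_1$--$v_3$ separator $S$. The path $v_1 v_2 v_3$ forces $v_2 \in S$, and the complementary path $v_1 v_\ell v_{\ell-1} \cdots v_4 v_3$ forces $S$ to contain some $v_j$ with $4 \le j \le \ell$. Because the cycle is induced, the only neighbors of $v_2$ on it are $v_1$ and $v_3$, so $v_2 \not\sim v_j$; thus $S$ contains two non-adjacent vertices, contradicting that $S$ is a clique.

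I expect the main obstacle to be the forward direction, specifically the verification that the cycle $P_a \cup P_b$ is genuinely chordless. This rests on two points that must be handled carefully: the fullness of $S$, which guarantees that the two paths exist and meet only at $x$ and $y$, and the absence of edges across the separator, which together with the shortest-path choice rules out chords both between and within the two paths. The backward direction is comparatively routine once the right pair of separated vertices on the cycle is chosen.
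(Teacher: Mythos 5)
The paper does not actually prove this statement: it is Dirac's classical characterization of chordal graphs, quoted from \cite{Gol04}, so there is no internal proof to compare against. Your proof is the standard argument for Dirac's theorem and it is correct: minimality of the separator yields that each of its vertices has neighbors in both components $C_a$ and $C_b$, the two shortest paths through $C_a$ and $C_b$ glue into an induced cycle of length at least four (chords are excluded by the shortest-path choice and by the absence of edges between distinct components of $G-S$), and conversely a minimal $v_1$--$v_3$ separator of an induced cycle must contain the non-adjacent pair $v_2$, $v_j$. One remark: your argument (correctly, and in agreement with Golumbic's definition) interprets ``minimal vertex separator'' as a minimal $a$--$b$ separator for some pair $a$, $b$; under the paper's literal phrasing ``minimal under inclusion,'' read globally over all separators of $G$, the backward direction would actually be false --- for $C_4$ together with an isolated vertex, the only inclusion-wise minimal separator is the empty set, which is vacuously a clique --- so the pairwise reading you adopted is the right one.
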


\begin{remark}\label{rmk:minimal-separator}
Let $G$ be a graph. If $H$ is a $\Pi$ completion of $G$ and $S$ is a vertex separator of $H$, then $S$ is a vertex separator of $G$.
\end{remark}

\begin{lemma}\label{lem:simplicialvetex-in-completions}
Let $G$ be a graph and let $H$ be a minimum chordal completion of $G$. Then, for every clique $C$ of $H$ properly contained in $N_H(v)$ for $v\in V(G)$, there exists a vertex $x\in V(H) \setminus C$ such that $vx \in E(G)$.
\end{lemma}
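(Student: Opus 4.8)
The plan is to argue by contradiction, using the minimality of $H$ as the only real lever. Suppose there is a clique $C$ of $H$ with $C \subsetneq N_H(v)$ for which the conclusion fails, i.e. every vertex $x \in V(H) \setminus C$ satisfies $vx \notin E(G)$. Since any $G$-neighbour of $v$ lying outside $C$ would be such an $x$, this assumption is exactly the statement that $N_G(v) \subseteq C$. From this I aim to build a chordal completion of $G$ with strictly fewer fill edges than $H$, contradicting the choice of $H$.

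The completion I would construct is obtained by \emph{detaching} $v$ from $H$ and \emph{reattaching} it only to $C$. Formally, let $H'$ be the graph on vertex set $V(H)$ whose edge set consists of all edges of $H$ not incident with $v$, together with the edges $vc$ for every $c \in C$; thus in $H'$ the neighbourhood of $v$ is exactly $C$.

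I would then check three things. First, $H'$ is a completion of $G$: every edge of $G$ avoiding $v$ survives in $H - v \subseteq H'$, and every edge of $G$ incident with $v$ joins $v$ to a vertex of $N_G(v) \subseteq C$, hence is present in $H'$; so $E(G) \subseteq E(H')$. Second, $H'$ is chordal: the graph $H - v$ is chordal, being an induced subgraph of the chordal graph $H$, and in $H'$ the vertex $v$ is simplicial since its neighbourhood $C$ is a clique of $H - v$. Adding a simplicial vertex to a chordal graph preserves chordality, because any induced cycle of length at least four through $v$ would force its two neighbours on the cycle, which both lie in the clique $C$ and are therefore adjacent, to be non-adjacent. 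Third, $H'$ has strictly fewer edges than $H$: the two graphs agree on every edge not incident with $v$, while $v$ has $|C|$ neighbours in $H'$ and $|N_H(v)|$ neighbours in $H$, and $C \subsetneq N_H(v)$ yields $|C| < |N_H(v)|$.

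Combining these, $H'$ is a chordal completion of $G$ with $\filledges(H') < \filledges(H)$, contradicting the minimality of $H$, and the lemma follows. The crux is the detach-and-reattach construction together with the observation that reattaching $v$ to the clique $C$ makes it simplicial; the preservation of chordality is the one spot that needs a (short) argument, while the remaining points are routine bookkeeping on which edges of $G$ are retained and on edge counts. I do not expect to need the separator characterization of Theorem~\ref{thm:chordal-charact} here, since the simplicial-vertex argument is self-contained.
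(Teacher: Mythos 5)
Your proposal is correct and follows essentially the same argument as the paper: both negate the conclusion to get $N_G(v) \subseteq C$, replace $v$'s neighbourhood in $H$ by exactly $C$ (your detach-and-reattach graph coincides with the paper's $H - F'$, where $F'$ is the set of fill edges from $v$ to $N_H(v) \setminus C$), and then use that $v$ is simplicial in the new graph to conclude chordality and contradict minimality. Your write-up is, if anything, slightly more careful, since you spell out why adding a simplicial vertex preserves chordality, a step the paper takes for granted.
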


\begin{proof}
Suppose, by contradiction, that for every vertex $x\in N_G(v) \setminus C$, $vx$ is a fill edge. Let us call $F'$ to the set of fill edges $vx$ with $x\in N_G(v) \setminus C$. Consider the graph $H'=H-F'$. Notice that $v$ is a simplicial vertex of $H'$ and $F' \neq \varnothing$. In addition, $H'-v=H-v$ and thus $H'$ is a chordal graph. Consequently, $H'$ is a chordal completion of $G$ with less fill edges than $H$, contradicting that $H$ is a minimum chordal completion of $G$. The contradiction arises from supposing that for every vertex $x\in N_G(v) \setminus C$, $vx$ is a fill edge. Therefore, $N_G(v)\cap (V(H) \setminus C) \neq \varnothing$.
\end{proof}

\begin{lemma}\label{lem:minimal-separator-completation}
Let $G$ be a graph having a vertex $v$ such that $G - v$ is a non-complete connected chordal graph, and let $H$ be a minimum chordal completion of $G$. If no clique separator $S \in\sep (G)$ is contained in $N_H(v)$, then $N_G(v) \in\sep (H)$.
\end{lemma}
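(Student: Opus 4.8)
The plan is to verify the two defining properties of a member of $\sep(H)$ for the set $N_G(v)$: that it is a vertex separator of $H$, and that it is inclusion-minimal as such. Three ingredients from the excerpt should do the work. Since $H$ is chordal, Theorem~\ref{thm:chordal-charact} tells me that every element of $\sep(H)$ is a clique of $H$; Remark~\ref{rmk:minimal-separator} lets me transfer separators of $H$ back to $G$; and Lemma~\ref{lem:simplicialvetex-in-completions} controls the fill edges at $v$ by producing, for every clique strictly inside $N_H(v)$, an original neighbour of $v$ lying outside it.

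First I would pin down that $N_G(v)$ actually separates $v$ from the remainder of $H$. Because $G-v$ is connected and non-complete, $H-v$ is connected and $V(G)\setminus N_G[v]\neq\varnothing$, so there is a vertex that $N_G(v)$ ought to cut off from $v$. The point to check is that no fill edge lets $v$ escape this cut, i.e.\ that $N_H(v)=N_G(v)$, or at least that $v$ has no fill neighbour in $V(G)\setminus N_G[v]$; here I would combine the minimality of $H$ with Lemma~\ref{lem:simplicialvetex-in-completions}, arguing that such a stray fill edge could be deleted and the neighbourhood re-triangulated at no greater cost, contradicting that $H$ is minimum. Once this is done, every $v$--$x$ path with $x\notin N_G[v]$ meets $N_G(v)$, so $N_G(v)$ is a vertex separator of $H$.

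It remains to prove minimality, and this is where the hypothesis enters and where I expect the main obstacle. The natural attempt is by contradiction: if $N_G(v)$ were not inclusion-minimal, some proper subset $S\subsetneq N_G(v)$ would already separate $v$ from a vertex $x\notin N_G[v]$; taking $S$ minimal it lies in $\sep(H)$, hence is a clique of $H$ by Theorem~\ref{thm:chordal-charact} and a separator of $G$ by Remark~\ref{rmk:minimal-separator}, and $S\subseteq N_G(v)\subseteq N_H(v)$. To finish I must extract from $S$ a minimal separator of $G$ that is still a clique \emph{in $G$} (not merely in $H$) and still contained in $N_H(v)$, so that it contradicts the hypothesis; reconciling ``clique of $H$'' with ``clique of $G$'', and making sure $v$ itself never falls into the separator being analysed, is the delicate step. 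I expect the absence of any clique separator of $G$ inside $N_H(v)$ to be exactly what prevents the minimum completion from routing through a smaller clique cut, thereby forcing $N_G(v)$ to be the minimal separator witnessing $N_G(v)\in\sep(H)$.
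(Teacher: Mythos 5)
Your proposal inverts the logical order of the paper's argument, and this inversion creates a genuine gap in your first step. You try to establish $N_H(v)=N_G(v)$ (or at least that $v$ has no fill neighbour outside $N_G[v]$) from the minimality of $H$ together with Lemma~\ref{lem:simplicialvetex-in-completions} alone, explicitly deferring the clique-separator hypothesis to the second step. Minimality alone does not give this. Concretely, let $G=C_5$ with vertices $v,u,x,y,w$ in cyclic order and let $H$ be the fan triangulation at $x$, with fill edges $xv$ and $xw$. Then $H$ is a minimum chordal completion of $G$, yet $vx$ is a fill edge and $x\notin N_G[v]$. Deleting the stray edge $vx$ leaves the induced four-cycle on $v,u,x,w$, and any repair again costs two fill edges; ``re-triangulated at no greater cost'' is not a contradiction with minimality --- you need strictly fewer edges, which requires that the deletion by itself preserves chordality, i.e.\ that the remaining neighbourhood $N_G(v)$ is a clique of $H$ (so that $v$ becomes simplicial). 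That clique-ness is exactly what the hypothesis supplies and what your ordering never establishes; note also that Lemma~\ref{lem:simplicialvetex-in-completions} takes as input a clique $C$ of $H$ properly contained in $N_H(v)$, and you invoke it without exhibiting such a $C$. (The example does not contradict the lemma itself: there $\{x,w\}\in\sep(G)$ is a clique of $H$ contained in $N_H(v)$, so the hypothesis fails for that $H$.)

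The paper uses the hypothesis first, not last. Since $N_H(v)$ is a vertex separator of the chordal graph $H$, it contains a minimal separator of $H$, which by Theorem~\ref{thm:chordal-charact} is a clique and by Remark~\ref{rmk:minimal-separator} is a separator of $G$, hence contains a clique member of $\sep(G)$ lying inside $N_H(v)$; the hypothesis forbids this unless $N_H(v)$ is itself that minimal separator of $H$. So $N_H(v)\in\sep(H)$ and in particular $N_H(v)$ is a clique of $H$. Only now does Lemma~\ref{lem:simplicialvetex-in-completions} apply, with $C=N_G(v)$ (a clique of $H$ because it sits inside the clique $N_H(v)$): if $N_G(v)\subsetneq N_H(v)$, the lemma produces $x\notin N_G(v)$ with $vx\in E(G)$, which is absurd; hence $N_H(v)=N_G(v)\in\sep(H)$. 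In other words, there is no separate ``then prove minimality'' phase --- minimality of $N_H(v)$ as a separator of $H$ is the first thing the hypothesis yields, and the identification with $N_G(v)$ is the conclusion, not the starting point. Two smaller defects: your claim that $G-v$ being connected and non-complete forces $V(G)\setminus N_G[v]\neq\varnothing$ is a non sequitur ($v$ may be adjacent to every other vertex while $G-v$ is non-complete); and your second step, where you correctly sense the clique-in-$G$ versus clique-in-$H$ subtlety, is left as a plan rather than carried out.
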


\begin{proof}
Since $N_H(v)$ is a vertex separator of $H$, $H$ is a chordal graph, and $N_H(v)$ contains no minimal clique separator in $\sep(G)$, Theorem~\ref{thm:chordal-charact} implies that $N_H(v)$ is a minimal clique separator of $H$. So, by Lemma \ref{lem:simplicialvetex-in-completions}, $N_H(v)=N_v(G)$, which means that $N_v(G) \in\sep (H)$.
\end{proof}

\begin{theorem}{\em \cite{Y1981}}\label{thm:np-chordal-completion-co-bipartite-graphs}
Given a co-bipartite graph $G$ and an integer $k$, it is \NP-complete to decide whether there exists a chordal completion of $G$ with at most $k$ fill edges.
\end{theorem}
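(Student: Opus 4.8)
The plan is first to settle membership in \NP and then to give a polynomial reduction from a known \NP-complete problem. Membership is routine: a chordal completion $H$ together with its set of fill edges is a certificate of polynomial size, and one can verify in polynomial time both that $\filledges(H)\le k$ and that $H$ is chordal (for instance by computing a perfect elimination ordering via Lex-BFS, or by applying Theorem~\ref{thm:chordal-charact}). So the real work lies entirely in proving \NP-hardness.

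For the hardness part I would exploit the very rigid structure of a co-bipartite graph. Write $V(G)=K_1\cup K_2$ where $K_1$ and $K_2$ are cliques; then every non-edge of $G$ joins $K_1$ to $K_2$, so the complement $\overline G$ is a bipartite graph with sides $K_1,K_2$. Because $K_1$ and $K_2$ are already cliques and every fill edge must cross the partition, a chordal completion of $G$ corresponds exactly to deleting a set of edges of $\overline G$, the deleted edges being precisely the fill edges. The first structural step is therefore the lemma: a co-bipartite graph $G$ is chordal if and only if $\overline G$ contains no induced $2K_2$ (equivalently, $\overline G$ is a chain graph). I would prove this through the classical fact that, here, chordal is equivalent to having no induced $C_4$: any induced hole of $G$ must use exactly two vertices of $K_1$ and two of $K_2$ (three in one clique would create a chord, which a hole forbids), so the only possible hole is a $C_4$ on two vertices of each side, and such a $C_4$ is precisely the image of an induced $2K_2$ of $\overline G$. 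Consequently the minimum fill-in of $G$ equals the minimum number of edges one must delete from the bipartite graph $\overline G$ to destroy all induced $2K_2$'s, i.e.\ to turn it into a chain graph.

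With this reformulation in hand, the theorem reduces to showing that deciding whether a bipartite graph can be made $2K_2$-free by deleting at most $k$ edges is \NP-hard, since from any bipartite graph $B$ one builds in polynomial time the co-bipartite graph $G$ with $\overline G = B$ while preserving the optimum exactly. I would obtain this last piece by a reduction from a suitable \NP-complete problem: following Yannakakis, one encodes the constraints of the source instance as forced $2K_2$'s whose destruction costs are balanced against a global budget $k$, so that a low-cost chain deletion exists precisely when the source instance is a yes-instance. The construction must employ weight/padding gadgets (conveniently simulated by groups of parallel vertices sharing an identical neighborhood) to make each unit of ``cost'' in the source problem correspond to a fixed number of deleted edges.

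I expect the main obstacle to be the reduction gadget together with the two-directional cost accounting. One must show not only that a solution of the source instance yields a chain deletion within budget, but conversely that \emph{every} minimum chain deletion can be read back as a valid solution of the source instance. Controlling the shape of an \emph{arbitrary} optimal edge-deletion set --- not merely the ``intended'' ones --- is the delicate point; it is handled by exchange arguments showing that any optimum can be normalized into the canonical nested form dictated by the chain-graph (nested-neighborhood) condition without increasing its cost, after which the encoding of the source instance becomes legible.
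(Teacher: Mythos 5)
First, a point of reference: the paper does not prove this statement at all. It is stated as Theorem~\ref{thm:np-chordal-completion-co-bipartite-graphs} with an attribution to~\cite{Y1981} and is used as a black box in the reduction of Lemma~\ref{lem:np-complete}. So your attempt can only be judged against the known proof from the literature, not against anything in this paper. Within that frame, your first two steps are correct and do match how the actual argument begins: membership in \NP{} is routine, and the structural reformulation is sound. In a co-bipartite graph $G$ with cliques $K_1,K_2$, any hole can contain at most two vertices of each clique, so every hole is a $C_4$; hence $G$ is chordal if and only if the bipartite graph $\overline{G}$ has no induced $2K_2$ (is a chain graph), and since every fill edge crosses the partition, the minimum fill-in of $G$ equals the minimum number of edges whose deletion from $\overline{G}$ destroys all induced $2K_2$'s. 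This equivalence is exactly the right first move.

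The genuine gap is your third paragraph, which is where the entire hardness content lives and where nothing is actually proved. Writing ``following Yannakakis, one encodes the constraints of the source instance as forced $2K_2$'s\dots'' is circular: the theorem you are asked to prove \emph{is} Yannakakis's theorem, so his construction is not available as a citable tool, and what you give in its place is only a description of what a reduction would look like. You never name the \NP-complete source problem, never define the gadget (the ``weight/padding'' vertices are mentioned but not specified), and never carry out either direction of the correctness argument. In particular, the step you yourself flag as delicate --- showing that an \emph{arbitrary} optimal deletion set can be normalized into the nested (chain) canonical form and then read back as a solution of the source instance --- is precisely the mathematical substance of the proof, and deferring it means the \NP-hardness remains unestablished. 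As it stands, your proposal correctly reduces the theorem to ``bipartite chain edge-deletion is \NP-hard'' and then assumes that statement; it is a valid reduction of the problem to an equivalent one, not a proof.
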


Given a co-bipartite graph $F$ with $(A,B)$ a bipartition of $\overline{F}$ and an integer $k$, set $K=k+\frac {1}{2} |A| (|A|-1)$ and define $F^*$ as the graph with vertex set $V(F)\cup\{c_1, \ldots ,c_{K+1}\}$ and $E(F^*)=(E(F) \setminus E(F[A]))\cup\{c_iu : i \in [K+1] \mbox{ and } u\in V(F) \setminus \{c_i\}\}$. In other words, $F^*$ is the graph obtained from $F$ by deleting all edges with both endpoints in $A$ and adding $K$ universal vertex to the resulting graph. Notice that $F^*$ is a split graph and thus a chordal graph. Denote by $G$ the graph with vertex set $V(G) = V(F) \cup \{v\}$ where $v$ is a new vertex and edge set $E(G) = E(F^*) \cup \{va : a \in A\}$. Write $C=\{c_1,\ldots,c_{K+1}\}$.

\begin{lemma}\label{lem:np-complete}
Let $F$ be a co-bipartite graph with $(A,B)$ a bipartition of $\overline{F}$, let $k$ be an integer and let $G$ be constructed from $F$ as above. Then, $F$ has a minimum chordal completion with at most $k$ fill edges if and only if $G$ has a minimum chordal completion with at most $K$ edges.
\end{lemma}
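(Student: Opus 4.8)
The plan is to prove the two implications separately, reducing everything to a single structural fact: in \emph{any} chordal completion $H$ of $G$ with $fill(H)\le K$, the set $A$ is forced to become a clique. Recall that $A$ is a clique of $F$ but that all $\binom{|A|}{2}=\frac{1}{2}|A|(|A|-1)$ edges inside $A$ were deleted when forming $F^{*}$, so in $G$ the set $A$ is independent while $N_G(v)=A$ and each $c_i\in C$ is adjacent to every vertex of $A\cup B$. The additive term $\frac{1}{2}|A|(|A|-1)$ in the definition of $K$ is meant to be exactly the cost of reinserting those deleted edges, and the $K+1$ universal vertices $c_1,\dots,c_{K+1}$ form the gadget that forces this reinsertion.

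For the direction $(\Leftarrow)$, which I regard as the heart of the argument, I would take a minimum chordal completion $H$ of $G$, so that $fill(H)\le K$, and first establish the \textbf{key claim} that $A$ induces a clique in $H$. Suppose not, and pick $a_1,a_2\in A$ with $a_1a_2\notin E(H)$. For every $c\in C$ the four vertices $v,a_1,c,a_2$ induce a four-cycle in $G$, since the edges $va_1,a_1c,ca_2,a_2v$ all lie in $G$ while $vc\notin E(G)$ and $a_1a_2\notin E(G)$. As $H$ is chordal, this $C_4$ must receive a chord in $H$, and since $a_1a_2\notin E(H)$ the chord must be $vc$. Because this holds for \emph{every} $c\in C$ and no edge $vc$ belongs to $G$, we obtain $|C|=K+1$ distinct fill edges incident to $v$, contradicting $fill(H)\le K$. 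Hence $A$ is a clique in $H$, which consumes exactly the $\binom{|A|}{2}$ fill edges corresponding to the deleted set $E(F[A])$. To finish this direction I restrict to $V(F)$: the graph $H[V(F)]$ is an induced subgraph of the chordal graph $H$, hence chordal; it spans $V(F)$; and it contains $F$, because $E(G)\cap\binom{V(F)}{2}=E(F)\setminus E(F[A])$ while the remaining edges $E(F[A])$ are present since $A$ is now a clique. Thus $H[V(F)]$ is a chordal completion of $F$, and a direct count shows that the number of edges of $H[V(F)]$ not already in $F$ equals the number of fill edges of $H$ lying inside $V(F)$ minus the reinserted $\binom{|A|}{2}$ edges, hence is at most $K-\binom{|A|}{2}=k$, as required.

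For the direction $(\Rightarrow)$ I would argue constructively. Starting from a minimum chordal completion $F'$ of $F$ with $fill(F')\le k$, I build $H$ from $G$ by adding all $\binom{|A|}{2}$ edges inside $A$ together with the fill edges $E(F')\setminus E(F)$; all of these lie inside $V(F)$ and are distinct from the edges already in $G$, so $fill(H)=\binom{|A|}{2}+fill(F')\le K$. It then remains to verify that $H$ is chordal: in $H-v$ each $c_i$ is universal, and since $H[V(F)]=F'$ is chordal, appending the universal vertices of $C$ one at a time preserves chordality (a cycle of length at least four through a universal vertex always has a chord); finally $N_H(v)=A$ is a clique, so $v$ is simplicial and appending it keeps the graph chordal.

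The step I expect to be the main obstacle is the key claim of $(\Leftarrow)$, namely turning the intuition that ``the $K+1$ universal vertices are too costly to route around'' into the clean contradiction above; once the clique on $A$ is forced, both the restriction $H\mapsto H[V(F)]$ and the explicit construction $F'\mapsto H$ are routine, and the fill-edge bookkeeping matches precisely because of the choice $K=k+\frac{1}{2}|A|(|A|-1)$. I note that the same forcing could instead be derived structurally, by combining Lemma~\ref{lem:simplicialvetex-in-completions} and Lemma~\ref{lem:minimal-separator-completation} with Theorem~\ref{thm:chordal-charact} to conclude that $N_H(v)=A$ is a minimal separator and hence a clique of $H$, but the direct $C_4$ gadget above seems cleaner and self-contained.
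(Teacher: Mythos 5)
Your proof is correct, and the heart of it --- forcing $A$ to be a clique in $H$ via the induced $C_4$ on $v,a,c_i,a'$ together with the pigeonhole on the $K+1$ vertices of $C$ --- is exactly the paper's key claim, as is your forward direction (add $E(F[A])$ plus the fill edges of a completion of $F$, then argue chordality via the universal vertices of $C$ and the simplicial vertex $v$). Where you genuinely diverge is in how the backward direction is finished. The paper, after forcing $A$ to be a clique, invokes its earlier machinery: Lemma~\ref{lem:simplicialvetex-in-completions}, Lemma~\ref{lem:minimal-separator-completation} and Theorem~\ref{thm:chordal-charact} are combined to show that $N_H(v)=N_G(v)=A$ and hence that no fill edge touches $C\cup\{v\}$, from which it concludes that the remaining $k'\le k$ fill edges run between $A$ and $B$ and can be added to $F$. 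Your argument skips all of that: you simply restrict to $H[V(F)]$, which is chordal as an induced subgraph of $H$, contains $F$ because $A$ is now a clique, and has at most $K-\binom{|A|}{2}=k$ edges outside $E(F)$ since every pair inside $A$ is a fill edge of $H$. This is a real simplification --- it needs neither the minimality of $H$ (which the paper's Lemma~\ref{lem:simplicialvetex-in-completions} requires) nor the separator analysis, since any fill edges wasted on $C\cup\{v\}$ are harmlessly discarded by the restriction. What the paper's route buys in exchange is the stronger structural conclusion $N_H(v)=N_G(v)$ and the exact location of all fill edges in a minimum completion, information that is not needed for the lemma as stated; your version proves the equivalence with less machinery and under the weaker hypothesis that $G$ merely has \emph{some} chordal completion with at most $K$ fill edges.
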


\begin{proof}
Suppose that $F$ has a minimum chordal completion with at most $k$ fill edges. Let us call $M$ to that set of fill edges. Consequently, a chordal completion $H$ of $G$ with $K$ edges can be obtained by adding all the edges with both endpoints in $A$ plus the edges of the set $M$. In order to show that $H$ is indeed a chordal completion of $G$, it suffices to observe that for any vertex $u \in V(H) \setminus V(F)$, $N_H(u)$ is a clique or $u$ is a universal vertex of $H$, i.e., $u$ does not belong to an induced $C_p$ for $p \ge 4$.

Conversely, suppose that $G$ has a minimum chordal completion $H$ with at most $K$ edges. Let us call $M$ to that set of fill edges. Notice that every minimal clique separator of $G - v$ contains the clique $C$.

We claim that for any $a,a' \in A$, it holds that $aa' \in E(H)$. Then, suppose the contrary and let $a,a' \in A$ such that $aa' \not\in E(H)$. Since $|M| \le K$, there is $c_i \in C$ such that $vc_i \not\in E(H)$, which implies that $vac_ia'$ is an induced $C_4$ of the chordal graph $H$, a contradiction. Hence, for any $a,a' \in A$, it holds that $aa' \in E(H)$.

Since $H$ has at most $K$ fill edges, $N_H(v)$ contains no minimal clique separator of $\sep(G)$. Therefore, by Lemma~\ref{lem:minimal-separator-completation}, $N_G(v)$ is a minimal separator of $H$ and thus, by Theorem \ref{thm:chordal-charact}, $N_G(v)$ is a clique in $H$. By Lemma \ref{lem:simplicialvetex-in-completions}, $N_H(v)=N_G(v)$. So, there is no fill edge having an endpoint in $C \cup \{v\}$. Since every edge with both endpoints in $A$ is a fill edge, the amount of edges with both endpoints in $A$ is $\frac{1}{2}|A|(|A|-1)$ and $C$ is a clique formed by universal vertices of $H-v$, it follows that the remaining $k' \le k$ edges have one endpoint in $A$ and the other in $B$. Therefore, adding these $k'$ edges to $F$ gives a chordal completion of $F$ with at most $k$ edges. 
\end{proof}

By combining Theorem \ref{thm:np-chordal-completion-co-bipartite-graphs} and Lemma \ref{lem:np-complete}, we obtain the following result.

\begin{theorem}
The {\sc Chordal $1$-completion} problem is \NP-complete. In addition, given a graph $G$ having a vertex $v$ such that $G - v$ is a split graph and an integer $k$, it is \NP-complete to decide whether $G$ has a chordal completion with at most $k$ edges.
\end{theorem}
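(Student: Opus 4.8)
The plan is to establish membership in \NP\ and then give a single polynomial-time reduction from the co-bipartite chordal completion problem of Theorem~\ref{thm:np-chordal-completion-co-bipartite-graphs} that witnesses both claims at once. For membership, I would observe that a chordal completion $H$ of $G$ with at most $k$ fill edges is a certificate of polynomial size, and that chordality can be verified in polynomial time (for instance via Theorem~\ref{thm:chordal-charact}, or by any linear-time recognition algorithm), so the decision problem lies in \NP.

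For hardness, I would take an arbitrary instance $(F,k)$ of the co-bipartite chordal completion problem, fix a bipartition $(A,B)$ of $\overline{F}$, and build the pair $(G,K)$ with $K = k + \frac{1}{2}|A|(|A|-1)$ exactly as in the construction preceding Lemma~\ref{lem:np-complete}. The decisive structural observation is that $G - v = F^*$ is a split graph, hence chordal; this simultaneously certifies that $(G,K)$ is a legitimate instance of {\sc Chordal $1$-completion} (deleting the single vertex $v$ leaves a chordal graph) and of the split-graph variant (the same $v$ leaves a split graph). Correctness is then immediate from Lemma~\ref{lem:np-complete}: $F$ admits a chordal completion with at most $k$ fill edges if and only if $G$ admits one with at most $K$ fill edges. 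Because one and the same construction witnesses both statements, the two NP-completeness assertions follow together, with no separate argument needed for the split-graph case.

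The point requiring care is that the reduction must run in polynomial time, while the construction inserts $K+1$ universal vertices $c_1,\dots,c_{K+1}$. I would first dispose of the trivial case: if $k \ge \binom{|V(F)|}{2}$ then $F$ can be completed to a complete graph within budget, so $(F,k)$ is a yes-instance and may be mapped to any fixed yes-instance of the target problem. Otherwise $k < \binom{|V(F)|}{2}$, whence $K = k + \frac{1}{2}|A|(|A|-1)$ is bounded by a polynomial in $|V(F)|$; consequently $G$ has polynomially many vertices and edges and is produced in polynomial time. The main obstacle is in fact entirely absorbed into Lemma~\ref{lem:np-complete}, so once that equivalence and the polynomial size bound on $K$ are in hand, the theorem is a direct corollary.
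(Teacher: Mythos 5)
Your proposal is correct and follows essentially the same route as the paper: the paper obtains the theorem directly by combining Theorem~\ref{thm:np-chordal-completion-co-bipartite-graphs} with the construction of $G$ (whose deletion of $v$ leaves the split graph $F^*$) and the equivalence of Lemma~\ref{lem:np-complete}. The only additions you make — explicit \NP\ membership and the observation that $k$ may be assumed smaller than $\binom{|V(F)|}{2}$ so that the $K+1$ added universal vertices keep the reduction polynomial — are standard technicalities the paper leaves implicit, and both are handled correctly.
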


\section{Trivially perfect graphs} \label{sec:TP}

Trivially perfect graphs were introduced by Wolk~\cite{Wolk62} as {\em comparability graphs of trees}. Golumbic~\cite{Gol78} called them {\em trivially perfect} graphs because it is easy to show that they are perfect from a characterization of perfect graphs, and showed that these graphs are precisely those graphs that are $\{P_4,C_4\}$-free. This characterization implies that the class of trivially perfect graphs is the subclass of cographs which are also chordal graphs or interval graphs.
In this section, we present a polynomial-time algorithm for finding a minimum trivially perfect completion of a graph $G$ having a vertex $v$ such that $G - v$ is trivially perfect.

For short, we will use ``TP completion" standing for trivially perfect completion and write ``$(G,F,v) \in$ {\sc 1TP}" if $G$ is connected a graph, $v \in V(G)$ and $F = G - v$ is a trivially perfect graph.
We denote the number of connected components of a graph $G$ by $\omega(G)$. A {\em rooted forest} is a set of rooted trees. It is known~\cite{YCC96} that there is a bijection $\alpha$, up to isomorphism, among the trivially perfect graphs and the rooted forests. In the remaining of the text, given a trivially perfect graph $G$, we will reserve the corresponding calligraphic letter to represent $\alpha(G)$, i.e., ${\cal G} = \alpha(G)$ and $G = \alpha^{-1}({\cal G})$.
We adopt the notation $|G| = |E(G)|$ and $|{\cal G}| = |V(G)|$.
If $G$ is connected, then we also denote the root of ${\cal G}$ by $g$. Note that $g$ is a universal vertex of $G$ and since $G$ can have other universal vertices, $g$ can have descendants that are also universal vertices of $G$.
We denote the trees of a rooted forest using the same letter with an underlined superscript indicating its position in a non-increasing ordering of their number of vertices, i.e., given a rooted forest ${\cal G}$,
one of the rooted trees of ${\cal G}$ with maximum number of vertices is chosen arbitrarily to be ${\cal G}^{\underline{1}}$, and 
for rooted trees ${\cal G}^{\underline{i}}$ and ${\cal G}^{\underline{j}}$ such that $|{\cal G}^{\underline{i}}| \ge |{\cal G}^{\underline{j}}|$ it holds if $i < j$. The root of ${\cal G}^{\underline{i}}$ is denoted by $g^{\underline{i}}$.

Let ${\cal T}$ be a rooted tree and let $u,w \in V({\cal T})$. We remark that the neighborhood of $u$ in $T$ is formed by the descendant and the ancestral vertices of $u$ in ${\cal T}$.
We denote ${\cal T}_u$ for the subtree of ${\cal T}$ rooted at $u$, $\pi(u)$ is the parent of $u$ in ${\cal T}$ and we can also use $T_u$ meaning $\alpha^{-1}({\cal T}_u)$.
If $u$ is ancestral of $w$, we denote the set of vertices that are descendants of $u$ and ancestral of $w$ in ${\cal T}$ by $[u,w]_{\cal T}$. We also use
$(u,w]_{\cal T} = [u,w]_{\cal T} \setminus \{u\}$,
$[u,w)_{\cal T} = [u,w]_{\cal T} \setminus \{w\}$ and
$(u,w)_{\cal T} = [u,w]_{\cal T} \setminus \{u,w\}$.
Every vertex of $[t,w)_{\cal T}$ is a {\em proper ancestral of $w$ in ${\cal T}$}.
The {\em level of $u$ in $V({\cal T})$} is 1 plus the distance of $u$ to the root of ${\cal T}$ and is denoted by $\ell_{\cal T}(u)$.
Observe that $|T| = \underset{{\cal T}^i \in {\cal T}}{\overset{v \in V({\cal T}^i)}{\sum}} (|{\cal T}^i_v|-1)$.

Every rooted tree ${\cal T}$ has also a special vertex called the {\em base of ${\cal T}$}, for which we reserve the letter $\overline{t}$. When not explicitly designated, the base of a tree is equal to its root. The base appears in the following definitions.

\begin{itemize}

\item For every $u,w \in V(T)$ such that $u$ is ancestral of $w$ and $w$ is ancestral of $\overline{t}$. If $w \ne \overline{t}$, then the subtree ${\cal T}_{u,w}$ is defined as ${\cal T}_u - {\cal T}_x$ such that $x$ is the child of $w$ that is ancestral of $\overline{t}$. The base of ${\cal T}_{u,w}$ is $w$. If $w = \overline{t}$, then we define ${\cal T}_{u,w} = {\cal T}_u$.

\item Given another rooted tree ${\cal G}$, denote by $\langle {\cal G}, {\cal T} \rangle = \langle {\cal G}_{g,\overline{g}}, {\cal T}_{t,\overline{t}} \rangle$ the rooted tree obtained by adding the edge $\overline{g}t$ with root $g$ and base $\overline{t}$.

\bigskip

For the following two definitions, let $k = \ell_{\cal T}(\overline{t})$ and for every $j \in [k]$, denote by $u_j$ the ancestral of $\overline{t}$ in ${\cal T}$ such that $\ell_{\cal T}(u_j) = j$.

\item For $1 \le i \le j \le k$, define the {\em average of ${\cal T}_{u_i,u_j}$} as $a({\cal T}_{u_i,u_j}) = \frac{\underset{q \in \{i, \ldots, j\}}{\sum} |{\cal T}_{u_q,u_q}|}{j-i+1}$. We will use $a({\cal T}) = a({\cal T}_{t,\overline{t}})$.

\item We say that ${\cal T}_{t,u_i}$ is the {\em leading subtree of ${\cal T}$} if $i \in [k]$ is the minimum number such that $a({\cal T}_{t,u_i}) \ge a({\cal T}_{t,u_j})$ for every $j \in [k]$.

\end{itemize}

Given a rooted tree ${\cal G}$, note that ${\cal G}_{g,\overline{g}} = {\cal G}$ and that if $\overline{g} = g$, then ${\cal G}_{g,g} = {\cal G}$. Note also that
if $u$ and $w$ are twins in $G$ and $u$ is ancestral of $w$ that is a proper ancestral of $\overline{g}$ in ${\cal G}$, then $V({\cal G}_{u,u}) = \{u\}$.
For $k \ge 3$ and rooted trees ${\cal T}^1, \ldots, {\cal T}^k$, we define $\langle {\cal T}^1, \ldots, {\cal T}^k \rangle = \langle \langle  {\cal T}^1, \ldots, {\cal T}^{k-1} \rangle, {\cal T}^{k} \rangle$.
Note that every rooted tree ${\cal T}$ can be decomposed into
$\langle {\cal T}_{t,\pi(u_1)}, {\cal T}_{u_1,\pi(u_2)}, \ldots, {\cal T}_{u_k,\overline{t}} \rangle$ for any $k \le \ell_{\cal T}(\overline{t})$ where $t$ is ancestral of $u_1$, $u_i$ is ancestral of $u_{i+1}$ for $i \in [k-1]$ and $u_k$ is ancestral of $\overline{t}$.
Hence, given a possibly disconnected trivially perfect graph $F$ and a connected TP completion $H$ of $F$, there is a TP completion $G$ of $F$ with $\omega(G) = \omega(F)$ such tat ${\cal H} = \langle {\cal X}^1_{x^1,x_1}, \ldots, {\cal X}^k_{x^k,x_k} \rangle$ for some $k \ge \omega(F)$ where ${\cal X}^i$ is a tree of 
${\cal G} - \{{\cal X}^1_{x^1,x_1}, \ldots, {\cal X}^{i-1}_{x^{i-1},x_{i-1}}\}$ for every $i \in [k]$ and $x_i$ is ancestral of $\overline{x_i}$ in ${\cal X}^i$.
We say that ${\cal H}$ is a {\em merge of ${\cal G}$}; and if for $i \in [k]$, ${\cal X}^i_{x^i,x_i}$ is the leading subtree with maximum average among the trees of ${\cal G} - \{{\cal X}^1_{x^1,x_1}, \ldots, {\cal X}^{i-1}_{x^{i-1},x_{i-1}}\}$, then we say that ${\cal H}$ is a {\em leading merge of ${\cal G}$}.

\begin{lemma} \label{lem:twotrees}
Let ${\cal T}^1$ and ${\cal T}^2$ be rooted trees. For $j \in \{1,2\}$, let ${\cal T}^j$ be a rooted tree. If $a({{\cal T}^1}) \ge a({{\cal T}^2})$, then $|G_1| \le |G_2|$ where ${\cal G}_1 = \langle {\cal T}^1, {\cal T}^2 \rangle$ and ${\cal G}_2 = \langle {\cal T}^2, {\cal T}^1 \rangle$.
\end{lemma}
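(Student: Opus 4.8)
The plan is to turn the inequality $|G_1| \le |G_2|$ into a transparent arithmetic comparison by computing both edge counts in closed form and then reading off the result from the hypothesis.

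First I would simplify the average. Writing $u_1 = t, \ldots, u_k = \overline{t}$ for the ancestors of the base of ${\cal T}$ with $k = \ell_{\cal T}(\overline{t})$, the summands telescope: $|{\cal T}_{u_q,u_q}| = |{\cal T}_{u_q}| - |{\cal T}_{u_{q+1}}|$ for $q < k$ while $|{\cal T}_{u_k,u_k}| = |{\cal T}_{u_k}|$, so $\sum_{q=1}^{k} |{\cal T}_{u_q,u_q}| = |{\cal T}_{u_1}| = |{\cal T}|$. Hence $a({\cal T}) = |{\cal T}| / \ell_{\cal T}(\overline{t})$: the average is simply the number of vertices divided by the level of the base. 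Setting $n_j = |{\cal T}^j|$ and $d_j = \ell_{{\cal T}^j}(\overline{t^j})$, the hypothesis $a({\cal T}^1) \ge a({\cal T}^2)$ becomes $n_1/d_1 \ge n_2/d_2$.

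Next I would compute $|G_1|$ via the identity $|T| = \sum_{v \in V({\cal T})} (|{\cal T}_v| - 1)$ applied to ${\cal G}_1 = \langle {\cal T}^1, {\cal T}^2\rangle$. In ${\cal G}_1$ the root $t^2$ of ${\cal T}^2$ is attached as a child of the base $\overline{t^1}$ of ${\cal T}^1$. Consequently the subtree rooted at $v$ is unchanged for every $v \in V({\cal T}^2)$ and for every $v \in V({\cal T}^1)$ that is not an ancestor of $\overline{t^1}$, whereas for the $d_1$ vertices $u_1, \ldots, u_{d_1}$ on the root-to-base path of ${\cal T}^1$ the subtree gains exactly the $n_2$ vertices of ${\cal T}^2$. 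Summing the contributions gives $|G_1| = |T^1| + |T^2| + d_1 n_2$, and the same argument with the roles of the two trees exchanged yields $|G_2| = |T^1| + |T^2| + d_2 n_1$.

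Subtracting, $|G_1| - |G_2| = d_1 n_2 - d_2 n_1$, which is $\le 0$ precisely when $n_1/d_1 \ge n_2/d_2$, i.e. when $a({\cal T}^1) \ge a({\cal T}^2)$, completing the argument. The main obstacle is the bookkeeping in the third step: one must justify cleanly that attaching ${\cal T}^2$ below $\overline{t^1}$ enlarges the subtree of precisely the $\ell_{{\cal T}^1}(\overline{t^1})$ ancestors of $\overline{t^1}$ (and of no other vertex), and that each such subtree grows by exactly $|{\cal T}^2|$. Everything else is the telescoping identity for the average together with a one-line cross-multiplication, so once the subtree-size accounting is pinned down the conclusion is immediate.
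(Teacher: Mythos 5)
Your proof is correct and follows essentially the same route as the paper's: identify $a({\cal T}^j)=|{\cal T}^j|/\ell_{{\cal T}^j}(\overline{t^j})$, compute $|G_1| = |T^1|+|T^2|+\ell_1|{\cal T}^2|$ and $|G_2| = |T^1|+|T^2|+\ell_2|{\cal T}^1|$, and cross-multiply. In fact you supply two details the paper treats as immediate (the telescoping of $\sum_q |{\cal T}_{u_q,u_q}|$ and the accounting of which subtrees grow when ${\cal T}^2$ is attached below $\overline{t^1}$), so your write-up is a more complete version of the same argument.
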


\begin{proof}
Write $\ell_1 = \ell_{{\cal T}^1}(\overline{t^1})$ and $\ell_2 = \ell_{{\cal T}^2}(\overline{t^2})$. By definition, $a({{\cal T}^1}) = \frac{|{\cal T}^1|}{\ell_1} \ge \frac{|{\cal T}^2|}{\ell_2} = a({{\cal T}^2})$. Then, $\ell_2 |{\cal T}^1| \ge \ell_1 |{\cal T}^2|$. It remains to note that $|G_1| = \ell_1 |{\cal T}^2| + |T^1| + |T^2|$ and that $|G_2| = \ell_2 |{\cal T}^1| + |T^1| + |T^2|$.
\end{proof}

\begin{lemma} \label{lem:average}
Let ${\cal F}$ be a family of rooted trees and let ${\cal H}$ be a merge of ${\cal F}$. If ${\cal H}$ is not a leading merge of ${\cal F}$, then there are vertices $u_1,u_2,u_3 \in V({\cal F})$ such that $u_1 \in [h,u_2)_{\cal H}$, $u_2 \in (u_1,u_3)_{\cal H}$, $u_3 \in (u_2,\overline{h}]_{\cal H}$ and $|H'| < |H|$ where
${\cal H}' = \langle {\cal H}_{h,\pi(u_1)}, {\cal H}_{u_2,u_3}, {\cal H}_{u_1,\pi(u_2)}, {\cal H} - {\cal H}_{h,u_3} \rangle$.
\end{lemma}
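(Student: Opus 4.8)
The plan is to reduce the statement to a single comparison of two averages and then to exhibit that comparison from the failure of the leading-merge property. First I would fix notation for the rearrangement: writing $P = {\cal H}_{h,\pi(u_1)}$, $A = {\cal H}_{u_1,\pi(u_2)}$, $B = {\cal H}_{u_2,u_3}$ and $Q = {\cal H} - {\cal H}_{h,u_3}$, the decomposition of a rooted tree recalled just before Lemma~\ref{lem:twotrees} gives ${\cal H} = \langle P, A, B, Q \rangle$, whereas by definition ${\cal H}' = \langle P, B, A, Q\rangle$. Thus ${\cal H}'$ is obtained from ${\cal H}$ by interchanging the two consecutive spine segments $A$ and $B$ while leaving $P$ and $Q$ in place. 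Counting edges exactly as in the proof of Lemma~\ref{lem:twotrees}, namely using that every spine vertex of an earlier segment is adjacent to all vertices of a later segment, the internal edges of $P,A,B,Q$ and every cross-contribution other than the one between $A$ and $B$ are unchanged, so I would obtain $|H| - |H'| = \ell_A\,\ell_B\,(a(B) - a(A))$, where $\ell_A,\ell_B$ denote the numbers of spine vertices of $A$ and $B$. Hence it suffices to produce spine vertices with $u_1 \in [h,u_2)_{\cal H}$, $u_2 \in (u_1,u_3)_{\cal H}$, $u_3 \in (u_2,\overline{h}]_{\cal H}$ and $a({\cal H}_{u_2,u_3}) > a({\cal H}_{u_1,\pi(u_2)})$.

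Next I would locate such an inversion from the hypothesis. Writing the given merge as ${\cal H} = \langle B_1, \ldots, B_k\rangle$ along its spine, I would first record the elementary prefix-average facts about a leading subtree ${\cal T}_{t,u_i}$, the shortest maximum-average prefix: every strictly shorter prefix has strictly smaller average, and every segment ${\cal T}_{u_{i+1},u_j}$ lying below it has average at most $a({\cal T}_{t,u_i})$. These yield, for a leading merge, that the block averages satisfy $a(B_1) \ge \cdots \ge a(B_k)$ and that each $B_i$ is the maximum-average leading subtree of the forest that remains once $B_1, \ldots, B_{i-1}$ have been removed. Since ${\cal H}$ is not a leading merge, I would take the first block $B_i$ violating this condition. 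If the violation is an inversion of block averages, $a(B_i) < a(B_{i+1})$, I would simply set $u_1$ to the root of $B_i$, $u_2$ to the root of $B_{i+1}$ and $u_3$ to the base of $B_{i+1}$, so that $A = B_i$ and $B = B_{i+1}$ and the required strict inequality holds; the first paragraph then gives $|H'| < |H|$.

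The step I expect to be the real obstacle is the remaining case, in which the block averages are still non-increasing but $B_i$ fails to be the maximum-average leading subtree of the current forest, either because $B_i$ is not the leading subtree of its own tree or because some other remaining tree has a leading subtree of strictly larger average. Here I would use the prefix-average monotonicity above to extract a sub-segment of strictly larger average from inside, or from just below, $B_i$ and to realize it as ${\cal H}_{u_2,u_3}$ for a suitable choice of $u_1,u_2,u_3$ along the spine. The delicate points are to keep every inequality strict, so that the conclusion is $|H'| < |H|$ rather than merely $|H'| \le |H|$, and to handle correctly the tie-breaking built into the definition of the leading subtree as the \emph{shortest} maximum-average prefix. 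Once the triple is produced, the reduction in the first paragraph closes the argument.
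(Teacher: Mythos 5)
Your first paragraph is correct, and it is essentially the paper's Lemma~\ref{lem:twotrees} recast for spine segments: with $n_X$ and $\ell_X$ denoting the number of vertices and of spine vertices of a segment $X$, one gets $|H|-|H'| = \ell_A n_B - \ell_B n_A = \ell_A\ell_B\bigl(a(B)-a(A)\bigr)$, so the lemma does reduce to exhibiting an adjacent inversion. Your case of an inversion between consecutive blocks is also fine (up to the degenerate situation where $B_{i+1}$ has a single spine vertex, so that $u_2=u_3$ and the triple violates the strict containments demanded by the statement --- a wrinkle the paper itself glosses over). The genuine gap is exactly the part you defer: the paper's entire proof is devoted to your ``remaining case'', and your sketch for it would not work as written. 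The witness of non-leadingness is the maximum-average leading subtree ${\cal T}^1$ of the forest, and its vertices need not lie inside, or just below, the offending block $B_i$: they can be scattered over many non-consecutive blocks of ${\cal H}$, interleaved with segments of other trees. That is why the paper does not argue locally around one block; it takes a minimal counterexample, observes that $\overline{t^1}$ then has an ancestral in ${\cal H}$ outside ${\cal T}^1$, partitions the whole spine interval $[h,\overline{t^1}]_{\cal H}$ into maximal alternating segments ${\cal G}^1,\ldots,{\cal G}^q$ lying inside/outside ${\cal T}^1$, uses the assumed absence of inversions to get $a({\cal G}^1)\ge\cdots\ge a({\cal G}^q)$, and in both parity cases contradicts the maximality of $a({\cal T}^1)$.

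There is a second, structural problem with your plan: your dichotomy is taken relative to one fixed block decomposition of ${\cal H}$, but being a leading merge is not a property of a fixed decomposition --- because of ties and of the freedom in where blocks are cut, a given decomposition can violate the greedy condition while ${\cal H}$ is nevertheless a leading merge under a different decomposition. For instance, let ${\cal F}$ consist of a tree with two spine positions of weight $3$ each and a second tree with a single position of weight $3$; the decomposition whose first block is the whole first tree violates your condition (the leading subtree of that tree is only its first position, being the shortest maximum-average prefix), yet the resulting ${\cal H}$ is a leading merge and admits no inversion whatsoever, so any procedure that ``extracts a sub-segment of strictly larger average'' from this violation must fail. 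Hence your case (b) hypothesis does not imply the existence of an inversion at all; the argument has to be run against the tree-level property (as the paper does, via the leading subtree ${\cal T}^1$ and induction on a minimal counterexample), and that is precisely the content your proposal leaves open.
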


\begin{proof}
Suppose by contradiction that the result does not hold. We can assume that ${\cal F}$ is a family with minimum number of vertices for which the result does not hold. Denote the leading subtree of ${\cal F}$ with maximum average as ${\cal T}^1$. Then, $\overline{t^1}$ has an ancestral in ${\cal H}$ not belonging to ${\cal T}^1$.
Denote the set of ancestrals of $\overline{t^1}$ in ${\cal H}$ by $\{w_1, \ldots, w_p\} = [h,\overline{t^1}]_{\cal H}$ and denote by ${\cal G}^1, \ldots, {\cal G}^q$ the partition of 
${\cal H}_{h,\overline{t^1}}$ into subtrees such that for every $i \in [q]$,
${\cal G}^i = {\cal H}_{w_j,w_k}$ is such that

\begin{itemize}
	\item either
	$\{w_j, \ldots, w_k\} \subseteq V({\cal T}^1)$, 
	$j = 1$ or $w_{j-1} \not\in V({\cal T}^1)$, and
	$k = p$ or $w_{k+1} \not\in V({\cal T}^1)$;

	\item or
	$\{w_j, \ldots, w_k\} \cap V({\cal T}^1) = \varnothing$,
	$j = 1$ or $w_{j-1} \in V({\cal T}^1)$, and
	$k = p$ or $w_{k+1} \in V({\cal T}^1)$.
\end{itemize}

On the one hand, ${\cal G}^i$ is a subtree of ${\cal T}^1$ if and only if $i$ is even.
By Lemma~$\ref{lem:twotrees}$, $a({\cal G}^i) \ge a({\cal G}^{i+1})$ for every $i \in [k-1]$.
Therefore, $a({\cal H}_{h,\overline{t^1}} - {\cal T}^1) > a({\cal T}^1)$, which contradicts the fact that the maximum leading subtree of ${\cal F}$ is ${\cal T}^1$.

On the other hand, ${\cal G}^i$ is a subtree of ${\cal T}^1$ if and only if $i$ is odd.
Using Lemma~$\ref{lem:twotrees}$ again, we have that $a({\cal H}_{h,\overline{t^1}} - {\cal T}^1) > a({{\cal T}^1_{g^3,\overline{t^1}}})$.
However, the fact that the maximum leading subtree of ${\cal F}$ is ${\cal T}^1$ implies that $a({\cal G}^1) < a({{\cal T}^1_{g^3,\overline{t^1}}})$, which means that $a({\cal H}_{h,\overline{t^1}} - {\cal T}^1) > a({{\cal T}^1})$, a contradiction with the fact that the maximum leading subtree of ${\cal F}$ is ${\cal T}^1$.
\end{proof}

Let $(G,F,v) \in$ {\sc 1TP}.
Note that if we add to $G$ the necessary edges to make $v$ a universal vertex, then the obtained graph $H$ is trivially perfect. 
We call this completion the {\em universal TP completion of $(G,v)$}, or simply $U(G,v)$. 
Let ${\cal R}$ be a subtree of a tree of ${\cal F}$. We denote by
$\top{\cal R}{}$ the subtree of ${\cal R}$ induced by $r$ and the vertices of the subtrees ${\cal R}_u$ such that $u$ is a child of $r$ with $V({\cal R}_u) \cap N_G(v) = \varnothing$.

\subsection{Slim TP completion}

Given a trivially perfect graph $F$ and $S \subseteq V(F)$, we say that a graph $H$ is a {\em slim TP completion of $(F,S)$} if $H$ is a connected trivially perfect completion of $(F,S)$, $S$ is a clique of $H$ and $\ell_{\cal H}(s) \le |S|$ for every $s \in S$.
A slim TP completion is minimum if there is no other with less edges than it.
The task of computing a minimum slim TP completion of $(F,S)$ takes part of the algorithm for solving {\sc Trivially perfect $1$-completion} in polynomial time (Algorithm~\ref{alg:MinTPC}).
In such algorithm, the base of ${\cal H}$ will be the vertex $s \in S$ such that $\ell_{\cal H}(s) = |S|$.
We show in the sequel how to find a minimum slim TP completion in polynomial time.

\begin{lemma} \label{lem:minslim}
Let $F$ be a trivially perfect graph, let $S \subseteq V(F)$ be such that $V(F') \cap S \ne \varnothing$ for every connected component $F'$ of $F$ and let $u \in V(F)$ such that every proper ancestral of $u$ in ${\cal F}$ belongs to $S$. If $H$ is a minimum slim TP completion of $(F,S)$, then $H[V({\cal F}_u)]$ is a minimum slim TP completion of $(F_u,S)$.
\end{lemma}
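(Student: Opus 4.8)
The plan is to prove the statement by an optimal-substructure (exchange) argument: I show that $H[V(\mathcal F_u)]$ is a feasible slim TP completion of the sub-instance, and that, were it not minimum, one could graft a cheaper completion of the sub-instance back into $H$ and contradict the minimality of $H$. Throughout write $A = V(\mathcal F_u)$, $B = V(F)\setminus A$, and $S_u = S\cap A$; since $\mathcal F_u$ is a single rooted tree, $F_u = F[A]$ is a connected trivially perfect graph. Splitting the edges of $F$ and of $H$ according to the partition $\{A,B\}$ yields the additive identity
\[
|E(H)|-|E(F)| = \bigl(|E(H[A])|-|E(F[A])|\bigr) + \bigl(|E(H[B])|-|E(F[B])|\bigr) + \bigl(e_H(A,B)-e_F(A,B)\bigr),
\]
so the fill of $H$ decomposes as the fill of $H[A]$ over $F_u$ plus two terms that I will argue do not depend on the internal structure of $H[A]$.

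The crux is a \emph{clean-separation} claim: in the minimum completion $H$, the set of $\mathcal H$-descendants of $u$ equals $A$, i.e.\ $\mathcal H[A] = \mathcal H_u$. One inclusion uses the hypothesis that every proper ancestral of $u$ in $\mathcal F$ lies in $S$: since $S$ is a clique of $H$ occupying the top $|S|$ levels of $\mathcal H$ (by slimness, $S$ is exactly the root-to-base spine), the proper ancestrals $a_1,\dots,a_m$ of $u$ form an initial segment of that spine and, being $F$-adjacent to every vertex of $A$, force all of $A$ to hang strictly below them. The remaining inclusions—that no $\mathcal F$-descendant of $u$ is pushed $\mathcal H$-above $u$, and that no vertex of $B$ is nested $\mathcal H$-below $u$—are where minimality enters: any such inversion or superfluous nesting creates fill edges that can be eliminated by the reordering operations of Lemma~\ref{lem:twotrees} and Lemma~\ref{lem:average}, contradicting that $H$ is minimum. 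I expect this separation step to be the main obstacle, since it is what turns a purely numerical inequality into a structurally localized one.

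Granting clean separation, feasibility of $H[A]$ as a slim TP completion of $(F_u,S_u)$ is routine: $H[A]$ is an induced subgraph of the $\{P_4,C_4\}$-free graph $H$, hence trivially perfect; it is connected because $\mathcal H_u$ is a subtree rooted at $u$; it contains $F_u$ as a spanning subgraph; and $S_u$ is a clique lying on the bottom contiguous segment of the global spine, so inside $\mathcal H_u$ it occupies levels $1,\dots,|S_u|$ and satisfies $\ell_{\mathcal H_u}(s)\le |S_u|$ for every $s\in S_u$—that is, the slimness condition transfers from $H$ to $H[A]$ (the case $S_u=\varnothing$ being vacuous).

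Finally I prove optimality. Clean separation pins the two remaining terms of the decomposition: $e_H(A,B)$ is exactly the complete bipartite join between $A$ and the $\mathcal H$-ancestors of $u$ on the spine, and both $e_H(A,B)-e_F(A,B)$ and $|E(H[B])|-|E(F[B])|$ depend only on where the block $A$ is attached in $\mathcal H$, not on how $H[A]$ is arranged internally. Hence, if $H[A]$ were not a minimum slim TP completion of $(F_u,S_u)$, I could take a strictly cheaper one $H'_A$ and graft its rooted tree in place of $\mathcal H_u$, attaching its top at the former $\mathcal H$-parent of $u$; the resulting $H^\ast$ is again a connected trivially perfect graph (joining two forests along a root-path preserves $\{P_4,C_4\}$-freeness), still has $S$ as a spine clique and is still slim, yet $|E(H^\ast)|<|E(H)|$, contradicting the minimality of $H$. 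Therefore $H[V(\mathcal F_u)]$ is a minimum slim TP completion of $(F_u,S_u)$, as claimed.
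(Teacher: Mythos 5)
Your proof hinges on the ``clean-separation'' claim that $V({\cal H}_u) = A$, and this claim is false --- not because a minimality argument for it is missing, but because feasibility itself forbids it. Slimness forces $S$ to be a chain in ${\cal H}$ whose levels are exactly $1,\ldots,|S|$, i.e., $S$ is precisely the root-to-base spine of ${\cal H}$, and by hypothesis \emph{every} connected component of $F$ contributes at least one vertex to $S$. Consider then the typical case in which the lemma is actually invoked (e.g., inside the proof of Lemma~\ref{lem:mincompaction}): $u \in S$ and $S \setminus V({\cal F}_u) \ne \varnothing$. Every vertex of $S \setminus V({\cal F}_u)$ lies on the spine, hence is ${\cal H}$-comparable to $u$, and those lying at levels greater than $\ell_{\cal H}(u)$ are ${\cal H}$-descendants of $u$, together with whatever subtrees of $B$ hang from them; so $V({\cal H}_u) \supsetneq A$ and vertices of $A$ and $B$ are interleaved below $u$ in \emph{every} feasible slim completion, including minimum ones (indeed the leading-merge structure of optimal solutions typically interleaves blocks from different components). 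Symmetrically, when $u \notin S$ and $S \cap V({\cal F}_u) \ne \varnothing$, every $s \in S \cap V({\cal F}_u)$ is an ${\cal H}$-\emph{ancestor} of $u$ (if $u$ were above $s$, then $u$ would lie on the root-to-base path, all of whose first $|S|$ vertices belong to $S$, contradicting $u \notin S$), so $A \not\subseteq V({\cal H}_u)$. No reordering via Lemma~\ref{lem:twotrees} or Lemma~\ref{lem:average} can restore your claim, since the violation is forced by the slimness constraint rather than by a suboptimal arrangement. With clean separation gone, the rest collapses: the cross term $e_H(A,B)-e_F(A,B)$ \emph{does} depend on the internal arrangement of $H[A]$ (a subtree of $F_u$ hung below a spine vertex of $B$ incurs cross fill edges, while hung elsewhere it may incur none), and the grafting step does not typecheck, because the subtree ${\cal H}_u$ you would excise contains vertices of $B$.

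This forced interleaving is exactly what the paper's proof is built to handle, and it is why that proof looks so different from yours. The paper assumes a counterexample $u$ chosen with \emph{maximum level} in ${\cal F}$ (an inductive device you do not have), treats the case $u \notin S$ by a replacement argument anchored on the spine $S \cap V({\cal F}_u)$ --- close in spirit to your grafting, and sound there because the only vertices of $V({\cal F}_u)$ ancestral to the base are those of $S \cap V({\cal F}_u)$ --- and treats the case $u \in S$ by observing that ${\cal H}^1$ minus its top is a \emph{merge} of minimum slim completions of the children's subtrees ${\cal F}_{u_i}$, then invoking Lemma~\ref{lem:average} to locate an improving exchange and, when the relevant interval of ${\cal H}$ is contaminated by $B$-vertices, partitioning that interval into maximal alternating blocks of $A$- and $B$-vertices and applying Lemma~\ref{lem:twotrees} to swap two adjacent blocks inside ${\cal H}$ itself. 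That block-swapping surgery on the interleaved tree, rather than a wholesale graft of a disjoint piece, is the missing idea; your proposal correctly flags the separation step as the main obstacle, but as stated that obstacle cannot be overcome.
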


\begin{proof}
Suppose by contradiction that $H^1 = H[V({\cal F}_u)]$ is not a minimum slim TP completion of $(F_u,S)$. We can assume that $u$ has been chosen maximizing its level in ${\cal F}$. 

First, consider that $u \not\in S$. If $S \cap V(F_u) = \varnothing$, then observe that
$V(F_u) \subset V({\cal H}_{u',u'})$, where $u' = \pi_{\cal F}(u)$, which means that no edges have been added joining vertices of $F_u$. Then, we can assume that $S_u := S \cap V(F_u) \ne \varnothing$.
By the definition of slim TP completion, every vertex of $S_u$ is ancestral of $u$ in ${\cal H}$, which means that $H^1$ is a slim TP completion of $(F_u,S)$. Recall that $V({\cal H}^1_{s,s}) = \{s\}$ if $s \in S_u$ is not the base of ${\cal H}^1$. Furthermore, the only vertices of $V(F_u)$ that are ancestrals of $\overline{h}$ in ${\cal H}$ are the vertices of $S_u$. Let $H^2$ be a minimum slim TP completion of $(F_u,S)$. We know that $|H^2| < |H^1|$.
Now, observe that if we replace $H^1$ by $H^2$ in $H$, the resulting graph is a slim TP completion of $(F,S)$ with less fill edges than $H$, which is a contradiction.

Consider now that $u \in S$. Denote by $u_1, \ldots, u_k$ the children of $u$ in ${\cal F}$ such that $F_{u_i}$ has vertices of $S$ for $i \in [k]$. Then, by the choice of $u$, it holds that $F_i = H[V({\cal F}_{u_i})]$ is a minimum slim TP completion of $({\cal F}_{u_i},S)$ for every $i \in [k]$.
Write ${\cal F}' = \{{\cal F}_1, \ldots, {\cal F}_k \}$.
Since $H^1$ is a slim TP completion of $(F_u,S)$, we conclude that ${\cal H}^2 = {\cal H}^1 - {\cal H}^1_{h^1,h^1}$ is a merge of ${\cal F}'$. Since $H^1$ is not minimum, by Lemma~\ref{lem:average}, ${\cal H}^2$ is not a leading merge of ${\cal F}'$.
Lemma~\ref{lem:average} also implies that there are vertices $u_1,u_2,u_3 \in V(H^2)$ such that $u_1 \in [h^2,u_2)_{{\cal H}^2}$, $u_2 \in (u_1,u_3)_{{\cal H}^2}$, $u_3 \in (u_2,\overline{h^2}]_{{\cal H}^2}$ and $|H'| < |H^2|$ where
${\cal H}' = \langle {\cal H}^2_{h^2,\pi(u_1)}, {\cal H}^2_{u_2,u_3}, {\cal H}^2_{u_1,\pi(u_2)}, {\cal H}^2 - {\cal H}^2_{h^2,u_3} \rangle$.
On the one hand, ${\cal H}^2_{u_1,u_3} = {\cal H}_{u_1,u_3}$. Then, Lemma~\ref{lem:average} implies that replacing ${\cal H}^2$ by ${\cal H}'$ in ${\cal H}$ yields a slim TP completion of $(F,S)$ with less edges than $H$.

On the other hand, ${\cal H}^2_{u_1,u_3} \ne {\cal H}_{u_1,u_3}$. Then, write $[u_1,u_3]_{\cal H} = \{w_1, \ldots, w_p\}$ and denote by ${\cal G}^1, \ldots, {\cal G}^q$ the partition of ${\cal H}_{u_1,u_3}$ into subtrees such that for every $i \in [q]$,
${\cal G}^i = {\cal H}_{w_j,w_k}$ is such that

\begin{itemize}
	\item either
	$\{w_j, \ldots, w_k\} \subseteq V({\cal H}^2_{u_1,u_3})$, 
	$j = 1$ or $w_{j-1} \not\in V({\cal H}^2_{u_1,u_3})$, and
	$k = p$ or $w_{k+1} \not\in V({\cal H}^2_{u_1,u_3})$;
	
	\item or
	$\{w_j, \ldots, w_k\} \cap V({\cal H}^2_{u_1,u_3}) = \varnothing$, 
	$j = 1$ or $w_{j-1} \in V({\cal H}^2_{u_1,u_3})$, and
	$k = p$ or $w_{k+1} \in V({\cal H}^2_{u_1,u_3})$.
\end{itemize}

Since $a({\cal H}^2_{u_2,u_3}) > a({\cal H}^2_{u_1,\pi(u_2)})$, we conclude that there is $i \in [k-1]$ such that $a({\cal G}^i) > a({\cal G}^{i+1})$. Then, apply Lemma~\ref{lem:twotrees} in these two trees, a slim TP completion of $(F,S)$ with less edges than $H$ is constructed, which is a contradiction.
\end{proof}

\begin{lemma} \label{lem:mincompaction}
The following hold for a connected trivially perfect graph $F$ and $S \subseteq V(F)$.

\begin{enumerate}[$(i)$]

\item If $f \not\in S$, then $H$ is a minimum slim TP completion of $(F,S)$ if and only if $H$ is the graph obtained from $F$ by adding the necessary edges to make every vertex of $S$ a universal vertex. \label{ite:fnotinS}

\item If $f \in S$, then $\langle {\cal F}_{f,f}, {\cal H}' \rangle$, where $H'$ is a minimum slim TP completion of $(F - V(\top{F}{}), S \setminus \{f\})$, is a minimum slim TP completion of $(F,S)$. Furthermore, ${\cal H}'$ is a leading merge of a minimum slim TP completions of the trees of ${\cal F} - \top{F}{}$. \label{ite:finS}

\end{enumerate}
\end{lemma}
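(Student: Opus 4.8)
The plan is to handle the two items separately, using item (i) as the base case of the recursion that item (ii) sets up, and to delegate the optimality of arrangements below the root to the exchange arguments already packaged in Lemmas~\ref{lem:twotrees}, \ref{lem:average} and \ref{lem:minslim}. Throughout, I use that $F$ connected makes ${\cal F}$ a single rooted tree with root $f$, and that $f$ is a universal vertex of $F$, hence remains universal in every completion $H \supseteq F$ and therefore lies on the initial path of universal vertices of ${\cal H}$.

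For item (i), I would first note that in a slim completion the clique $S$ occupies exactly the levels $1,\dots,|S|$ along the spine from the root to the base $\overline{h}$. Since $f \notin S$ but $f$ is forced onto the universal path, $f$ must sit strictly below level $|S|$, i.e.\ below $\overline{h}$, so every $s \in S$ is a proper ancestor of $f$ in ${\cal H}$. I would then show that any proper ancestor of a universal vertex is itself universal: every vertex is comparable to $f$, hence comparable to any ancestor of $f$. Thus each $s \in S$ is universal in $H$, so every slim completion already contains all edges of the graph $H^\star$ obtained by making every vertex of $S$ universal, giving $E(H^\star) \subseteq E(H)$. Since $H^\star$ is itself a slim completion (adding universal vertices preserves trivial perfectness, $S$ is a clique, and the universal vertices may be reordered so that $S$ occupies the top $|S|$ levels), $H^\star$ is minimum, and minimality of $H$ forces $E(H) = E(H^\star)$, yielding the equivalence.

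For item (ii), $f \in S$ is universal, so after reordering the universal vertices (which preserves the number of edges) I may take $f$ to be the root of ${\cal H}$. The split of ${\cal F}$ into the top part $\top{F}{}$ (the child subtrees of $f$ avoiding $N_G(v)$, together with $f$) and the active subtrees (those meeting $N_G(v)$) is chosen so that $S \setminus \{f\}$ lies entirely in the active part; hence the free subtrees contain no clique vertex. I would argue that leaving each free subtree as an unaltered child of $f$ is optimal, since any fill edge incident to a free subtree can be deleted without destroying trivial perfectness, connectivity, or the clique $S$; so a minimum completion adds no such edge and the top part of ${\cal H}$ is exactly ${\cal F}_{f,f} = \top{F}{}$. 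The portion of ${\cal H}$ below $f$ outside the free subtrees is then a connected completion ${\cal H}'$ of the active forest $F - V(\top{F}{})$ with spine-clique $S \setminus \{f\}$, so that ${\cal H} = \langle {\cal F}_{f,f}, {\cal H}' \rangle$. Applying Lemma~\ref{lem:minslim} to each active child $u$ of $f$ (whose only proper ancestor is $f \in S$) shows that the restriction of ${\cal H}'$ to each active subtree is a minimum slim completion of that subtree, so ${\cal H}'$ is a merge of minimum slim completions of the trees of ${\cal F} - \top{F}{}$; by Lemma~\ref{lem:average}, a non-leading merge can be rearranged into one with strictly fewer edges, so minimality forces ${\cal H}'$ to be a leading merge, which is the ``furthermore'' clause. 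Finally, since the free and active parts branch off at $f$ and do not interact, the total cost of $\langle {\cal F}_{f,f}, {\cal H}' \rangle$ splits into the fixed cost of the free part plus the separately minimized cost of ${\cal H}'$, so the construction is minimum.

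The main obstacle I anticipate is the bookkeeping around the free/active split: justifying rigorously that the $N_G(v)$-defined top part $\top{F}{}$ may be detached unchanged, that $S \setminus \{f\}$ indeed lives wholly in the active forest, and that no global rearrangement mixing free and active subtrees beats treating them independently. Once the root placement of $f$ and this detachment are secured, the optimality of the arrangement below $f$ is handed to Lemmas~\ref{lem:average} and \ref{lem:twotrees} through Lemma~\ref{lem:minslim}, so the remaining work is assembly rather than new combinatorics.
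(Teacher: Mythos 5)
Your proposal follows the paper's proof essentially step for step: item (i) via the observation that $f\notin S$ forces every $s\in S$ to be a universal ancestor of $f$ in any slim completion, so the completion must contain the graph that makes $S$ universal; and item (ii) by rooting the completion at $f$, detaching $\top{F}{}$ unchanged, obtaining minimality of ${\cal H}'$ from minimality of ${\cal H}$, and delegating the leading-merge claim to Lemmas~\ref{lem:minslim} and~\ref{lem:average}, exactly as the paper does. One caveat on the step you yourself flag as the main obstacle: your justification that a fill edge incident to a free subtree ``can be deleted without destroying trivial perfectness'' is false as an edge-by-edge operation (deleting a single edge from a trivially perfect completion can create an induced $P_4$, e.g.\ removing the fill edge between a free-subtree vertex and an internal vertex it was hung below); the correct repair, which the paper leaves implicit in asserting ${\cal H}=\langle {\cal F}_{f,f},{\cal H}'\rangle$, is to re-hang all free subtrees under $f$ simultaneously, which deletes all such fill edges at once while keeping a valid tree representation and can only decrease the edge count.
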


\begin{proof}
\bigskip\noindent $(\ref{ite:fnotinS})$ Let $H'$ be a slim TP completion of $(F,S)$. Since $f \not\in S$, every vertex $s \in S$ is ancestral of $f$ in ${\cal H}'$. Note that $s \in S$ is ancestral of $f$ in ${\cal H}'$ if and only if $s$ is a universal vertex of $H'$.
Now, the result follows from the fact that the addition of the necessary edges to make every vertex of $S$ a universal vertex produces a slim TP completion of $(F,S)$.

\bigskip\noindent $(\ref{ite:finS})$ Since $f$ is a universal vertex of $F$, $f$ is a universal vertex of every TP completion of $(F,S)$, i.e., $f$ can be chosen as the root of any minimum slim TP completion of $(F,S)$. Let ${\cal H}$ be a minimum slim TP completion of $(F,S)$ with root $f$. Then, we can write ${\cal H} = \langle {\cal F}_{f,f}, {\cal H}' \rangle$. Note that $H'$ is a slim TP completion of $(F - V({\cal F}_{f,f}), S \setminus \{f\})$. Now, the minimality of ${\cal H}$ implies that ${\cal H'}$ is also minimum. Furthermore, since $f \in S$, Lemma~\ref{lem:minslim} implies every $H[F_u]$ is a minimum slim TP completion of $(F_u,S)$ for every child $u$ of $f$ in ${\cal F}$. Then, Lemma~\ref{lem:average} implies that ${\cal H}'$ is a leading merging of minimum slim TP completions of the trees of ${\cal F} - \top{F}{}$.
\end{proof}

\begin{lemma} \label{lem:slim}
Let $F$ be a trivially perfect graph of order $n$ and let $S \subseteq V(F)$ be such that $V(F') \cap S \ne \varnothing$ for every connected component $F'$ of $F$. A minimum slim TP completion of $(F,S)$ can be computed in $O(n^2)$ steps.
\end{lemma}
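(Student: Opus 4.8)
The plan is to read off a recursive algorithm directly from Lemma~\ref{lem:mincompaction} and to bound its running time by a charging argument over the rooted forest $\mathcal{F} = \alpha(F)$. First I would build $\mathcal{F}$ from $F$ in time linear in the input (standard trivially perfect recognition), and precompute for every $u$ the subtree size $|\mathcal{F}_u|$ and the level $\ell_{\mathcal{F}}(u)$, so that every later operation is phrased purely in terms of the tree structure and the averages $a(\cdot)$ can be read off quickly.

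The algorithm is divide-and-conquer on each connected component. For a component with root $f$, if $f \notin S$ then Lemma~\ref{lem:mincompaction}(\ref{ite:fnotinS}) says the unique minimum slim completion makes every vertex of $S$ universal, a direct $O(n)$ construction. If $f \in S$, then Lemma~\ref{lem:mincompaction}(\ref{ite:finS}) instructs me to split off $\top{F}{}$ (the root together with the child-subtrees avoiding $S$), recurse on the child-subtrees $F_{u_1}, \dots, F_{u_k}$ meeting $S$ to obtain minimum slim completions $H_1, \dots, H_k$, form a leading merge of $\{\mathcal{H}_1, \dots, \mathcal{H}_k\}$, and finally prepend $\mathcal{F}_{f,f}$. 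Correctness follows by induction on $|V(F)|$: the base case is a single vertex, and the inductive step is exactly Lemma~\ref{lem:mincompaction}, with Lemma~\ref{lem:average} guaranteeing that the greedy leading merge is optimal.

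For the running time I would argue $O(n^2)$ by charging. There are $O(n)$ recursive calls (one for each vertex all of whose proper ancestors lie in $S$, as in the hypothesis of Lemma~\ref{lem:minslim}), and the only nontrivial work per call is the leading merge. Maintaining prefix sums of the quantities $|\mathcal{T}_{u_q,u_q}|$ along each active root-to-base path lets me evaluate the average of any candidate leading subtree in $O(1)$ and find the leading subtree of a single tree in time linear in its base path. Granting that a leading merge of trees of total size $m_f := |V(F_f)|$ can be computed in $O(m_f)$ overall, the total cost is $\sum_{f \in S} m_f = \sum_u \#\{\,S\text{-ancestors of }u\,\} \le n \cdot n = O(n^2)$, since each vertex is counted once per $S$-ancestor and there are at most $n$ of those.

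The hard part will be establishing the per-merge linear bound. A naive implementation recomputes all leading subtrees after each extraction, costing $\Theta(m_f^2)$ per merge, which aggregates to $\Theta(n^3)$ on a long path of $S$-nodes and so is not good enough. I therefore expect the crux to be an amortized argument showing that, across all extractions within a single merge, each vertex on a base path is touched $O(1)$ times: using the monotonicity of segment averages forced by Lemma~\ref{lem:twotrees} (decreasing averages along the spine of a leading merge), already-settled segments need never be rescanned, so the whole merge runs in $O(m_f)$ rather than $O(m_f^2)$. Pinning down the data structure and the invariant it maintains is the technical heart; the remaining steps are bookkeeping justified by Lemma~\ref{lem:mincompaction} and Lemma~\ref{lem:average}.
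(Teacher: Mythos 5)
Your proposal is correct and takes essentially the same route as the paper: recurse componentwise using Lemma~\ref{lem:mincompaction}, combine the pieces by a leading merge justified by Lemmas~\ref{lem:minslim} and~\ref{lem:average}, and bound the total work by a depth-times-linear-work argument, which is just the paper's recurrence $T(n)=T(n-1)+O(n)=O(n^2)$ written as a charging scheme. The step you single out as the ``technical heart''---that a single leading merge can be performed in time linear in its input---is precisely the step the paper itself asserts without proof (``a leading merge can be computed in $O(n)$''), so your attempt matches the paper's proof in both structure and level of rigor.
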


\begin{proof}
By Lemmas~\ref{lem:average} and~\ref{lem:minslim}, a minimum slim TP completion of $(F,S)$ can be constructed by 
by computing minimum slim TP completion of $(F',S \cap V(F'))$ for every connected component of $F$ and then doing a leading merge of the obtained trees.

According to Lemma~\ref{lem:mincompaction}, for every connected component $F'$ of $F$,
if $f' \not\in S$, then a minimum slim TP completion of $(F',S \cap V(F'))$ can be obtained by adding the necessary edges to make all vertices of $V(F') \cap S$ adjacent to all vertices of $F'$.
If $f' \in S$, then, applying recursion, a minimum slim TP completion of $(F',S \cap V(F'))$ is $\langle \top{F'}{}, {\cal H'} \rangle$, where $H'$ is a minimum slim TP completion of $(F' - V(\top{F'}{}), S \setminus \{f'\})$.

Since a leading merge can be computed in $O(n)$,
to add edges to make that some vertices become universal can also be done in $O(n)$ steps (considering the tree representation of a trivially perfect graph), and
the sum of the inputs of the recursive calls is smaller than $n$,
we conclude that the total time complexity $T(n)$ of this algorithm can be expressed as $T(n) = T(n-1) + O(n)$, which gives $T(n) = O(n^2)$.
\end{proof}

\subsection{Properties of a minimum TP completion}

Now, we present some properties of minimum TP completions that are used in the polynomial-time algorithm for solving the {\sc Trivially perfect $1$-completion} problem.

\begin{lemma} \label{lem:basic}
Let $G$ be a connected graph and let $(G,F,v) \in$ {\sc 1TP}. Then, every minimum TP completion $H$ of $G$ different of $U(G,v)$ having $v$ as the base satisfies the following properties where $S = N_G(v)$ and $\omega = \omega(F)$.

\begin{enumerate}[$(i)$]
	\item $h \in S \cup \{f^{\underline{1}}, \ldots, f^{\underline{\omega}}\}$. \label{ite:rootTiS}
	
	\item For every $u \in [h,v)_{\cal H}$, it holds $V({\cal H}_{u,u}) \subseteq V(F^{\underline{i}})$ for some $i \in [\omega]$. \label{ite:homogeneous}

	\item If $h \in V({\cal F}^{\underline{2}})$, then $h$ has a child in ${\cal H}$ belonging to $\{v\} \cup V({\cal F}^{\underline{1}})$. \label{ite:rootT2}

	\item If $h \in V({\cal F}^{\underline{2}})$, then the number of fill edges that are not incident to $h$ is less than $|{\cal F}^{\underline{2}}|$. \label{ite:F2root}

	\item If $\ell_{\cal H}(v) \ge 2$ and $\omega \ge 2$, then $fill(H) \ge |{\cal F}^{\underline{2}}|$. \label{ite:boundF2}

	\item If $\ell_{\cal H}(v) \ge 3$, then $h \in V({\cal F}^{\underline{1}}) \cup V({\cal F}^{\underline{2}})$. \label{ite:rootT1T2}

	\item If $\ell_{\cal H}(v) \ge 3$, then $|{\cal F}^{\underline{1}}| > |{\cal F}| - |{\cal F}^{\underline{1}}| - |{\cal F}^{\underline{2}}|$. \label{ite:universal}

	\item If $\ell_{\cal H}(v) \ge 3$, then $|{\cal F}^{\underline{1}}| > \frac{|{\cal F}|}{3}$. \label{ite:oneThird}
		
	\item For $u \in [h,v]_{\cal H}$, ${\cal H}_u$ is a minimum TP completion of $G[V({\cal H}_u)]$. \label{ite:hereditary}

	\item If $|V({\cal H}_{h,h})| = 1$, then there is $h^* \in (h,v)_{\cal H}$ with $|V({\cal H}_{h^*,h^*})| \ge 2$ and $i \in [\omega]$ such that $[h,h^*] \subset V(F^{\underline{i}})$. \label{ite:top}

	\item If $h \not\in \{f^{\underline{1}}, \ldots, f^{\underline{\omega}}\}$, then $vf^{\underline{i}} \not\in E(H)$ and ${\cal H} = \langle {\cal P}^i, {\cal Y} \rangle$ where $i \in [\omega]$ is such that $h \in V({\cal F}^i)$ and ${\cal P}^i$ is a minimum slim TP completion of $(F^{\underline{i}}, S \cap V(F^{\underline{i}}))$. \label{ite:slim}

\end{enumerate}
\end{lemma}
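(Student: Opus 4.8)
The plan is to establish the eleven items essentially in the listed order, since the later ones lean on the earlier ones, and to drive almost every argument by the same mechanism: because $H$ is a minimum TP completion, any modification that keeps the graph trivially perfect and a completion of $G$ cannot strictly decrease $fill(H)$. Via the bijection $\alpha$ between trivially perfect graphs and rooted forests, a TP completion of $G$ corresponds to a merge of $\mathcal{F}$ into which $v$ has been inserted along the spine $[h,v]_{\mathcal{H}}$; Lemmas~\ref{lem:twotrees} and~\ref{lem:average} then tell us that along this spine the side-subtrees $\mathcal{H}_{u,u}$ must appear in non-increasing order of average, i.e. $\mathcal{H}$ is, up to the position of $v$, a leading merge. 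This ``sorted spine'' picture is the common backbone of the whole proof.

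For the structural items I would argue as follows. For (i), note first that $h\ne v$ (otherwise $v$ is universal and $H=U(G,v)$), so $h\in V(F)$; if $h$ lay strictly inside its component $F^{\underline{i}}$ with $h\notin S$, then re-rooting that component at the true root $f^{\underline{i}}$, which is already adjacent to all of $F^{\underline{i}}$, removes fill edges without creating new ones, contradicting minimality, so $h\in S\cup\{f^{\underline{1}},\dots,f^{\underline{\omega}}\}$. Item (ii) follows because each spine side-tree $\mathcal{H}_{u,u}$ is connected and, after deleting fill edges and $v$, lies inside $V(F)$; if it met two distinct components of $F$ we could split it and reattach the smaller piece lower on the spine, strictly reducing fill by Lemma~\ref{lem:twotrees}. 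Item (ix) is the standard cut-and-paste optimality: every ancestor of $u$ is universal on $V(\mathcal{H}_u)$, so replacing $\mathcal{H}_u$ by a cheaper TP completion of $G[V(\mathcal{H}_u)]$ would yield a cheaper completion of $G$, a contradiction.

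The counting items use the merge cost directly. For (v), when $v$ is not the root and $\omega\ge2$, connecting the at-least-two components of $F$ through a spine that does not make $v$ universal forces the second largest component to be absorbed, which costs at least $|\mathcal{F}^{\underline{2}}|$ fill edges. Items (iii) and (iv) refine this under $h\in V(\mathcal{F}^{\underline{2}})$, comparing the given arrangement against the one in which $\mathcal{F}^{\underline{1}}$ is pulled above $\mathcal{F}^{\underline{2}}$. Items (vi)--(viii) then come from the sorted-spine picture together with these bounds: if $\ell_{\mathcal{H}}(v)\ge3$ the root must sit in one of the two largest components, and comparing the leading merge against moving $\mathcal{F}^{\underline{1}}$ to the top yields $|\mathcal{F}^{\underline{1}}|>|\mathcal{F}|-|\mathcal{F}^{\underline{1}}|-|\mathcal{F}^{\underline{2}}|$; since $|\mathcal{F}^{\underline{2}}|\le|\mathcal{F}^{\underline{1}}|$ this immediately gives $|\mathcal{F}^{\underline{1}}|>|\mathcal{F}|/3$, proving (viii) from (vii) by arithmetic.

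Finally, (x) is a bookkeeping consequence of (ii) and the sorted spine: if the top side-tree is a single vertex, the first spine vertex $h^*$ carrying a nontrivial side-tree must still lie in the same component as $h$. Item (xi) combines (i) (so $h\in S$ when $h$ is not a component root) with the slim-completion machinery: the portion of $\mathcal{H}$ coming from $F^{\underline{i}}$ is forced to be a clique-path on $S\cap V(F^{\underline{i}})$, hence a slim completion, and it is minimum by (ix) and Lemma~\ref{lem:slim}; the non-adjacency $vf^{\underline{i}}\notin E(H)$ follows because making $v$ adjacent to $f^{\underline{i}}$ would make $f^{\underline{i}}$ eligible to be the root, again contradicting minimality. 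I expect the main obstacle to be the exchange arguments behind the fill lower bounds (iv) and (v) and the decomposition in (xi): each requires explicitly rebuilding an alternative completion from the forest data and invoking Lemmas~\ref{lem:twotrees} and~\ref{lem:average} to certify that the rearrangement does not increase fill, while simultaneously tracking the special vertex $v$ and its neighborhood $S$ across components.
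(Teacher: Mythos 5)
Your overall strategy (minimality plus exchange/counting arguments, items proved in order, (viii) by arithmetic from (vii), (xi) from (i), (x) and the slim machinery) matches the paper's, but two of your load-bearing ideas do not hold up. First, the ``sorted spine'' backbone is false as stated: it is not true that ${\cal H}$ must be, up to the position of $v$, a leading merge of ${\cal F}$. The obstruction is that $H$ must contain all edges from $v$ to $S=N_G(v)$, so every vertex of $S$ is forced to be an ancestor or a descendant of $v$ in ${\cal H}$; this can pin small subtrees containing $S$-vertices high on the spine regardless of averages, and reordering the spine by averages can destroy the property of being a completion of $G$. (This constraint is exactly why the paper introduces slim TP completions.) Accordingly, Lemma~\ref{lem:average} is never invoked in the paper's proof of this lemma; items $(\ref{ite:rootT2})$, $(\ref{ite:F2root})$, $(\ref{ite:rootT1T2})$, $(\ref{ite:universal})$ and $(\ref{ite:top})$ are all proved by direct fill counts against the fixed yardstick $U(G,v)$: since $H$ is minimum, $fill(H) \le fill(U(G,v)) = |V(F)|-|S| < |V(F)|$, and each hypothesis to be refuted is shown to force $fill(H) \ge |V(F)|$. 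Your sketches for these items (``comparing against pulling ${\cal F}^{\underline{1}}$ above ${\cal F}^{\underline{2}}$'', ``bookkeeping from the sorted spine'') never set up this comparison, and it is not clear how they would produce the precise inequalities claimed, e.g.\ $|{\cal F}^{\underline{1}}| > |{\cal F}| - |{\cal F}^{\underline{1}}| - |{\cal F}^{\underline{2}}|$ in item $(\ref{ite:universal})$.

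Second, your argument for item $(\ref{ite:rootTiS})$ fails at the decisive step. The lemma asserts a property of \emph{every} minimum completion different from $U(G,v)$, so an exchange argument must \emph{strictly} decrease the number of fill edges. Re-rooting the component at $f^{\underline{i}}$ is, under the natural reading, a swap of the positions of $h$ and $f^{\underline{i}}$ in ${\cal H}$; since the tree shape is unchanged, the number of edges (hence $fill$) is exactly preserved, so no contradiction with minimality arises --- at best you have produced \emph{another} minimum completion whose root lies in $S \cup \{f^{\underline{1}}, \ldots, f^{\underline{\omega}}\}$, which is weaker than the statement. The paper instead distinguishes cases according to whether $f^{\underline{i}}v \in E(H)$ and whether $f^{\underline{i}}$ lies above or below $v$: two cases end in counting contradictions, and the remaining case uses a surgery that deletes $h$ and reinserts it as the parent of the non-spine children of the deepest spine vertex of $F^{\underline{i}}$, giving $E(H^*) \subset E(H)$ strictly. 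The same defect appears in your justification of $vf^{\underline{i}} \notin E(H)$ in item $(\ref{ite:slim})$: that ``$f^{\underline{i}}$ would be eligible to be the root'' contradicts nothing about the given $H$; the paper derives the non-adjacency structurally from item $(\ref{ite:top})$, by locating $f^{\underline{i}}$ inside the side tree ${\cal H}_{h^*,h^*}$, whose vertices other than possible ancestors of $v$ are non-neighbors of $v$.
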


\begin{proof}
\bigskip\noindent $(\ref{ite:rootTiS})$ Suppose the contrary. Therefore, $h$ is a non-universal vertex of $V(F^{\underline{i}})$ for some $i \in [\omega]$. Observe that $h$ and $f^{\underline{i}}$ are not twins in $F^{\underline{i}}$ neither in $H$, but they are twins in $H[V(F^{\underline{i}})]$.

First, consider that $f^{\underline{i}}v \in E(H)$.
On the one hand, $f^{\underline{i}}$ is ancestral of $v$ in ${\cal H}$. Since $f^{\underline{i}}$ is not a universal vertex of $H$, $\omega \ge 2$ and there is $w \in V(F^{\underline{j}})$ for $j \ne i$ that is ancestral of $f^{\underline{i}}$ in ${\cal H}$ such that $|V({\cal H}_{w,w})| \ge 2$. We can assume that $w$ has been chosen with minimum level. Note that every vertex of $F^{\underline{k}}$ for $k \ne i$ has a fill edge to $h$ and every vertex of $V(F^{\underline{\ell}})$ for $\ell \ne j$ has a fill edge to $w$, which implies that $fill(H) \ge |V(F)| > fill(U(G,v))$, which is not possible.
On the other hand, $v$ is ancestral of $f^{\underline{i}}$ in ${\cal H}$. 
In this case, every vertex of $V(F^{\underline{i}}) \setminus S$ has a fill edge to $v$. Since $v \ne h$, $\omega \ge 2$ and then every vertex of $F^{\underline{j}}$ for $j \ne i$ has a fill edge to $h$. Therefore, we have that $fill(H) \ge fill(U(G,v))$.

Now, consider that $f^{\underline{i}}v \not\in E(H)$. Let $u$ be the vertex of $F^{\underline{i}}$ that is ancestral of $v$ in ${\cal H}$ with maximum level. Such vertex there exists because $h \in V(F^{\underline{i}})$.
Observe that $f^{\underline{i}} \in V({\cal H}_{u,u})$ and that for every $u' \in V(F^{\underline{i}})$ that is a proper ancestral of $u$ in ${\cal H}$, it holds ${\cal H}_{u',u'} = \{u'\}$, i.e., ${\cal H}_{h,h} = \{h\}$.
Now, construct a tree ${\cal H}^*$ by deleting $h$ from ${\cal H}$ and reinserting $h$ as the parent of all children of $u$ except the one that is ancestral of $v$. Now, it remains to observe that $H^*$ is also a TP completion of $G$ having less fill edges than $H$ since $E(H^*) \subset E(H)$, which is a contradiction. 

\bigskip\noindent $(\ref{ite:homogeneous})$
Suppose by contradiction that there is $u \in [h,v)_{\cal H}$ such that $V({\cal H}_{u,u})$ contains vertices of $V(F^{\underline{i}})$ and of $V(F^{\underline{j}})$ for different $i,j \in [\omega]$. Without loss of generality, we can assume that $u \in V(F^{\underline{i}})$. Let $s \in S \cap V(F^{\underline{j}})$ and $w \in V(F^{\underline{j}}) \cap V({\cal H}_{u,u})$.
We can assume that $w$ has been chosen minimizing its level in ${\cal H}$.
Since $sv \in E(H)$, we have that $s \not\in V({\cal H}_{u,u})$ and that $wv \not\in E(H)$. Since both $w$ and $s$ are adjacent to $f^{\underline{j}}$ in $H$, we conclude that $f^{\underline{j}} \in [h,u)_{\cal H}$. Let $w'$ be the vertex of $[f^{\underline{j}},u)_{\cal H}$ with maximum level such that $w' \in V(F^{\underline{j}})$
and $W$ be the induced subgraph of $H$ containing $w$ and the descendants of $w$ in ${\cal H}$ that belong to $V(F^{\underline{j}})$.

Now, let ${\cal H}'$ be the tree obtained from ${\cal H}$ by deleting $V(W)$ and adding the rooted tree ${\cal W}$ as a subtree of $w'$. Since $|H'| < |H|$, we have a contradiction.

\bigskip\noindent $(\ref{ite:rootT2})$ Suppose by contradiction that every vertex of $\{v\} \cup V({\cal F}^{\underline{1}})$ has level at least 3 in $\cal H$. Therefore, $fill(H) \ge |{\cal F}^{\underline{i}}| + 2 \left( \underset{j \in [\omega] \setminus \{2,i\}}{\sum} |{\cal F}^{\underline{j}}| \right)$ for some $i \in [\omega] \setminus \{1,2\}$. Since $|{\cal F}^{\underline{1}}| \ge |{\cal F}^{\underline{2}}|$, it holds that $fill(H) \ge |{\cal F}^{\underline{1}}| + |{\cal F}^{\underline{2}}| + |{\cal F}^{\underline{i}}| + 2 \left( \underset{j \in [\omega] \setminus \{1,2,i\}}{\sum} |{\cal F}^{\underline{j}}| \right) \ge |V(F)| > fill(U(G,v))$, which is a contradiction.

\bigskip\noindent $(\ref{ite:F2root})$ For every vertex $w \in V({\cal F} - {\cal F}^{\underline{2}})$, the fill edge $hw$ exists in $H$, which means that there are at least $|{\cal F} - {\cal F}^{\underline{2}}|$ fill edges in ${\cal H}$ with one extreme in $h$. Then, if the number of fill edges that are not incident to $h$ is at least $|{\cal F}^{\underline{2}}|$, it holds that $fill(H) \ge |V(G)| > fill(U(G,v))$, which is a contradiction.

\bigskip\noindent $(\ref{ite:boundF2})$ If $h \in V(F^{\underline{1}})$, then every vertex of $F^{\underline{2}}$ has a fill edge to $h$, and if $h \in V(F^{\underline{i}})$ for $i \ge 2$, then every vertex of $F^{\underline{1}}$ has a fill edge to $h$. Since $|{\cal F}^{\underline{1}}| \ge |{\cal F}^{\underline{2}}|$, the result does follow.

\bigskip\noindent $(\ref{ite:rootT1T2})$ Suppose by contradiction that $\ell_{\cal H}(v) \ge 3$ and $h \in V(F^{\underline{i}})$ for some $i \ge 3$. Denote by $v_2$ the ancestral of $v$ that is child of $h$ in ${\cal H}$.
If $v_2 \in V(F^{\underline{i}})$, then $wh$ and $wv_2$ are fill edges in $H$ for every $w \in V(F^{\underline{j}})$ for $j \ne i$, which means that $fill(H) \ge 2 \left( \underset{j \in [\omega] \setminus \{i\}}{\sum} |{\cal F}^{\underline{j}}| \right) \ge |V(G)| > fill(U(G,v))$ because $|{\cal F}^{\underline{i}}| \le |{\cal F}^{\underline{1}}|$, which is not possible. Therefore, $v_2 \in V(F^{\underline{k}})$ for some $k \in [\omega] \setminus \{i\}$.

Now, we know that $v_2$ is ancestral of every vertex of $V(F^{\underline{j}})$ for any $j \not\in \{i,k\}$. Therefore, $wh$ and $wv_2$ are fill edges in $H$ for every $w \in V(F^{\underline{j}})$ with $j \not\in \{i,k\}$. Furthermore, every vertex $w \in V(F^{\underline{k}})$ has a fill edge to $h$, which implies that $fill(H) \ge |{\cal F}^{\underline{k}}| + 2 \left(\underset{j \in [\omega] \setminus \{i,k\}}{\sum} |{\cal F}^{\underline{j}}| \right)  \ge |V(G)|  > fill(U(G,v))$ because $|{\cal F}^{\underline{1}}| \ge |{\cal F}^{\underline{i}}|$, which is a contradiction.

\bigskip\noindent $(\ref{ite:universal})$ Suppose by contradiction that $|{\cal F}^{\underline{1}}| \le |{\cal F}| - |{\cal F}^{\underline{1}}| - |{\cal F}^{\underline{2}}|$. If there is $j \in [\omega]$ such that $V({\cal H}_{h,v_2}) \subseteq V({\cal F}^{\underline{j}})$, where $v_2$ is the ancestral of $v$ that is child of $h$ in ${\cal H}$, then for every vertex $u$ of ${\cal F} - {\cal F}^{\underline{j}}$, the fill edges $uh$ and $uv_2$ exist in $H$, which means that

\[fill(H) \ge 2 \left(\underset{i \in [\omega] \setminus \{j\}}{\sum} |{\cal F}^{\underline{i}}| \right) \ge 2 \left(\underset{i \in [\omega] \setminus \{1\}}{\sum} |{\cal F}^{\underline{i}}| \right).\]

\noindent Since $\underset{i \in [\omega] \setminus \{1,2\}}{\sum} |{\cal F}^{\underline{i}}| \geq |{\cal F}^{\underline{1}}|$, $fill(H) \ge |V(G)| > fill(U(G,v))$, which is not possible. Then, without loss of generality, we can say that
$V({\cal H}_{h,h}) \subseteq V({\cal F}^{\underline{i}})$ and
$V({\cal H}_{v_2,v_2}) \subseteq V({\cal F}^{\underline{j}})$ for different $i,j \in [\omega]$. Therefore,
	
\[fill(H) \ge \min \{|{\cal F}^{\underline{i}}|, |{\cal F}^{\underline{j}}|\} + 2 \left( \underset{k \in [\omega] \setminus \{i,j\}}{\sum} |{\cal F}^{\underline{k}}| \right) \ge\]

\[|{\cal F}^{\underline{1}}| + \min \{|{\cal F}^{\underline{i}}|, |{\cal F}^{\underline{j}}|\} + \left( \underset{k \in [\omega] \setminus \{i,j\}}{\sum} |{\cal F}^{\underline{k}}| \right),\]

\noindent because $\underset{k \in [\omega] \setminus \{i,j\}}{\sum} |{\cal F}^{\underline{i}}| \geq |{\cal F}^{\underline{1}}|$. Since $|{\cal F}^{\underline{1}}| + \min \{|{\cal F}^{\underline{i}}|, |{\cal F}^{\underline{j}}|\} \ge |{\cal F}^{\underline{i}}| + |{\cal F}^{\underline{j}}|$, it holds that $fill(H) \ge |V(G)| > fill(U(G,v))$, which is a contradiction.

\bigskip\noindent $(\ref{ite:oneThird})$ By~$(\ref{ite:universal})$, we know that $|{\cal F}^{\underline{1}}| > |{\cal F}^{\underline{3}}| + \ldots + |{\cal F}^{\underline{\omega}}|$. Since $|{\cal F}^{\underline{1}}| \ge |{\cal F}^{\underline{2}}|$, it holds that $|{\cal F}^{\underline{1}}| > \frac{|{\cal F}|}{3}$.

\bigskip\noindent $(\ref{ite:hereditary})$ Suppose that there is a minimum TP completion $R$ of $G[V({\cal H}_u)]$ having less fill edges than ${\cal H}_u$. Now, observe that $\langle {\cal H}_{h,\pi_{\cal H}(u)}, {\cal R} \rangle$ is a TP completion of $G$ with less fill edges than $H$, which is a contradiction.

\bigskip\noindent $(\ref{ite:top})$ Let $i \in [\omega]$ such that $h \in V(F^{\underline{i}})$ and let $x \in [h,v]_{\cal H}$ such that $x \not\in V(F^{\underline{i}})$ but every proper ancestral of $x$ in ${\cal H}$ belongs to $V(F^{\underline{i}})$. Suppose by contradiction that $|V({\cal H}_{h^*,h^*})| = 1$ for every proper ancestral of $x$ in ${\cal H}$. 
Note that there is a fill edge joining every vertex not in $F^{\underline{i}}$ to $h$; and that
there is a fill edge joining every vertex in $F^{\underline{i}}$ to $x$.
Therefore, $fill(H) \ge |{\cal F}| > fill(U(G,v))$, which is a contradiction.

\bigskip\noindent $(\ref{ite:slim})$
By~$(\ref{ite:rootTiS})$, we know that $h \in S$. Let $i \in [\omega]$ such that $h \in V({\cal F}^i)$.

If $|S \cap V(F^{\underline{i}})| = 1$, then we claim that $V(\top{H}{}) = V(F^{\underline{i}})$, which proof the result for this case.
Suppose the contrary and let $w$ be a vertex of $V(F^{\underline{i}}) \setminus V(\top{H}{})$ with minimum level in ${\cal H}$. Denote by $W$ the subgraph of $F^i$ induced by the descendant vertices of $w$ in ${\cal H}$.
Now, let ${\cal H}'$ be the tree obtained from ${\cal H}$ by deleting $V(W)$ and adding the rooted tree ${\cal W}$ as a subtree of $h$. Since $|H'| < |H|$, we have a contradiction.

Then, assume that $|S \cap V(F^{\underline{i}})| \ge 2$. Therefore, $|V(\top{H}{})| = 1$.
By~$(\ref{ite:top})$, there is $h^* \in (h,v)_{\cal H}$ with $|V({\cal H}_{h^*,h^*})| \ge 2$ such that $[h,h^*] \subset V(F^{\underline{i}})$. Choosing $h^*$ with minimum level in ${\cal H}$, we conclude that $V({\cal H}_{h',h'}) = \{h'\}$ for every $h' \in [h,h^*)_{\cal H}$. Furthermore, $f^{\underline{i}} \not\in [h,h^*]_{\cal H}$, because otherwise, $f^{\underline{i}}$ could be chosen as the root of ${\cal H}$.
Since $f^{\underline{i}}$ is a universal vertex of $F^i$, $|V({\cal H}_{h^*,h^*})| \ge 2$ implies that $f^{\underline{i}} \in V({\cal H}_{h^*,h^*}) \setminus \{h^*\}$, which means that $f^{\underline{i}}v \not\in E(H)$.
Therefore, every vertex of $S \cap V(F^{\underline{i}})$ is ancestral of $v$ in ${\cal H}$ and $\ell_{\cal H}(s) \le |S \cap V(F^{\underline{i}})|$ for every $s \in S \cap V(F^{\underline{i}})$, which means that ${\cal H}_{h,h^*}$ is a slim TP completion of $(F^{\underline{i}},S \cap V(F^{\underline{i}}))$ and ${\cal H} = \langle {\cal H}_{h,h^*}, {\cal Y} \rangle$. Finally, the fact that $H$ is a minimum TP completion of $G$ implies that ${\cal H}_{h,h^*}$ is a minimum slim TP completion of $(F^{\underline{i}},S \cap V(F^{\underline{i}}))$.
\end{proof}

\subsection{Computing a minimum TP completion}

We begin by presenting the general idea of the proposed algorithm.
Let $(G,F,v) \in$ {\sc 1TP} and let $H$ be a minimum TP completion of $G$.
Because of items~$(\ref{ite:rootTiS})$,~$(\ref{ite:rootT1T2})$ and~$(\ref{ite:slim})$ of Lemma~\ref{lem:basic}, either $\ell_{\cal H}(v) \le 2$ or
there are at most $4$ possibilities to $h$, namely, $h \in \{f^{\underline{1}},f^{\underline{2}},s_1,s_2\}$, where $s_i$ is the root of a minimum slim TP completion $P^i$ of $F^{\underline{i}}$ for $i \in [2]$.
We will see that to construct all trees ${\cal T}$ such that $T$ is a TP completion of $G$ and $\ell_{\cal T}(v) \le 2$ can be done in polynomial time.
To the other cases, we can write ${\cal H} = \langle \top{H}{}, {\cal H}^* \rangle$, where either $\top{H}{} = \top{F}{\underline{i}}$ 
for $i \in [2]$ or $\top{H}{} = \top{P}{i}$ for $i \in [2]$ and ${\cal H}^*$ is
a minimum TP completion of $G - \top{H}{}$.
These ideas lead to an algorithm for computing a minimum TP completion of $G$.
However, because of the recursion, we cannot guarantee a polynomial-time complexity. Therefore, in Algorithm~\ref{alg:MinTPC}, we exploit these and other properties in order to replace this big recursive call by few recursive calls with small inputs in order to obtain a polynomial-time number of steps.

We need some definitions.
Let $G$ be a connected graph and $v$ a vertex of $V(G)$ such that $(G,F,v) \in$ {\sc 1TP}, and let $k = \min \{2, \omega(F)\}$.
For $i \in [\omega(F)]$, $\Upsilon_i(G,v)$ is a family containing at most two members.
One of the members of $\Upsilon_i$ is $\top{F}{\underline{i}}$, and if $|N_G(v) \cap V(F^{\underline{i}})| = 1$, then the second member exists and it is the minimum slim TP completion of $(F^{\underline{i}}, N_G(v) \cap V(F^{\underline{i}}))$.
Denoting by $J^1$ a minimum slim TP completion of $(F^{\underline{i}},N_G(v) \cap V(F^{\underline{i}}))$, define $\Lambda(G,v) = \{ U(G,v), {\cal J}^1, \Upsilon_1, \Upsilon_2 \}$.
Let $H$ be a TP completion of $G$.
We will write ${\cal H} = \langle {\cal X}_1, \ldots, {\cal X}_k \rangle_\Lambda$ if for every $i \in [k]$, it holds that ${\cal X}_i \in \Lambda(G-({\cal X}_1 \cup \ldots {\cal X}_{i-1}),v)$.

Using the above definitions, we can state a result that synthesizes and is stronger than the ideas discussed in the first paragraph of this section.

\begin{corollary} \label{cor:minTPC}
Let $(G,F,v) \in$ {\sc 1TP}. If $H$ is a minimum TP completion of $G$, then there is ${\cal X}^1 \in \Lambda(G,v)$ such that ${\cal H} = \langle {\cal X}^1,{\cal Y} \rangle$ where ${\cal Y}$ is a minimum TP completion of $G - X^1$. Furthermore, we can write ${\cal H} = \langle {\cal X}^1, \ldots, {\cal X}^k \rangle_\Lambda$.
\end{corollary}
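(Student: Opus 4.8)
The plan is to read both assertions off the items of Lemma~\ref{lem:basic} through a case analysis on the root $h$ of ${\cal H}$, after normalising the base. Because the base of a rooted tree is a free choice while every item of Lemma~\ref{lem:basic} is phrased for a minimum TP completion having $v$ as base, I would first declare $\overline{h}=v$; then Lemma~\ref{lem:basic} applies to $H$ whenever $H\neq U(G,v)$. If $H=U(G,v)$ I take ${\cal X}^1=U(G,v)\in\Lambda(G,v)$ with ${\cal Y}$ trivial, so from now on $H\neq U(G,v)$ and hence $\ell_{\cal H}(v)\ge 2$ (level $1$ would make $v$ the root, i.e.\ universal).

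In the principal case $\ell_{\cal H}(v)\ge 3$ I would locate $h$ as follows. Item~$(\ref{ite:rootTiS})$ gives $h\in S\cup\{f^{\underline{1}},\ldots,f^{\underline{\omega}}\}$ with $S=N_G(v)$, and item~$(\ref{ite:rootT1T2})$ forces the host component down to $h\in V(F^{\underline{1}})\cup V(F^{\underline{2}})$; fix $i\in\{1,2\}$ with $h\in V(F^{\underline{i}})$. If $h=f^{\underline{i}}$, I peel the top, setting ${\cal X}^1=\top{F}{\underline{i}}$: arguing as in the proof of item~$(\ref{ite:slim})$, and using item~$(\ref{ite:homogeneous})$ to keep the initial root-to-base segment inside $V(F^{\underline{i}})$, one shows ${\cal H}=\langle\top{F}{\underline{i}},{\cal Y}\rangle$, and $\top{F}{\underline{i}}$ is by definition a member of $\Upsilon_i\subseteq\Lambda(G,v)$. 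If instead $h\in S\setminus\{f^{\underline{i}}\}$, item~$(\ref{ite:slim})$ hands us ${\cal H}=\langle{\cal P}^i,{\cal Y}\rangle$ with ${\cal P}^i$ a minimum slim TP completion of $(F^{\underline{i}},S\cap V(F^{\underline{i}}))$; this ${\cal P}^i$ is precisely the element ${\cal J}^1$ of $\Lambda(G,v)$ (equivalently the slim member of $\Upsilon_i$ when $|S\cap V(F^{\underline{i}})|=1$). In both branches $v\in V({\cal Y})$, so ${\cal Y}$ is a TP completion of $G-X^1$; its minimality is the usual exchange argument behind item~$(\ref{ite:hereditary})$, since a cheaper completion of $G-X^1$ re-merged beneath ${\cal X}^1$ would beat $H$.

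The boundary case $\ell_{\cal H}(v)\le 2$ must be handled directly, matching the fact that the algorithm treats $\ell_{\cal H}(v)\le 2$ as a separate base case. Here $\ell_{\cal H}(v)=1$ is the already-settled $U(G,v)$, while for $\ell_{\cal H}(v)=2$ the root $h$ is universal and $v$ is its child; I would either exhibit the top piece in $\Lambda$ exactly as above or, when $h$ lies in a component $F^{\underline{i}}$ with $i\ge 3$, argue by a re-rooting exchange that such an $H$ cannot be minimum, so that $h\in V(F^{\underline{1}})\cup V(F^{\underline{2}})$ may be assumed and the previous paragraph applies.

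For the ``furthermore'' I would iterate the first assertion: ${\cal Y}$ is a minimum TP completion of $G-X^1$ that again contains $v$, so applying the first assertion to it yields ${\cal X}^2\in\Lambda(G-X^1,v)$, and so on, producing ${\cal H}=\langle{\cal X}^1,\ldots,{\cal X}^k\rangle_\Lambda$. The real content of the bound $k=\min\{2,\omega(F)\}$ is that the iteration closes after $k$ steps: once a piece has been removed from the largest component, the size estimates of items~$(\ref{ite:universal})$ and~$(\ref{ite:oneThird})$ (which force $|F^{\underline{1}}|>|F|/3$ whenever $\ell(v)\ge 3$) prevent the residual completion from placing $v$ at level $\ge 3$ below $fill(U(G,v))$, so the next $\Lambda$-piece already fixes the remainder. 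I expect the main obstacle to be exactly this termination analysis together with the preservation of the hypotheses of Lemma~\ref{lem:basic} under peeling — in particular that the reduced instance stays connected so that the lemma can be reapplied — and, secondarily, the verification in the $h=f^{\underline{i}}$ branch that the peeled top is literally $\top{F}{\underline{i}}$.
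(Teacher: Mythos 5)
Your proof takes the same route as the paper's: the paper disposes of this corollary in a single sentence, as a consequence of items~$(\ref{ite:rootTiS})$, $(\ref{ite:homogeneous})$, $(\ref{ite:rootT1T2})$ and~$(\ref{ite:slim})$ of Lemma~\ref{lem:basic}, and your case analysis on $h$ (the case $H=U(G,v)$; the case $h=f^{\underline{i}}$ via an exchange in the style of items~$(\ref{ite:homogeneous})$ and~$(\ref{ite:slim})$; the case $h\in S$ via item~$(\ref{ite:slim})$; minimality of ${\cal Y}$ via the exchange behind item~$(\ref{ite:hereditary})$; iteration for the last claim) is exactly the intended unpacking of that sentence. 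There is, however, a genuine gap in your slim branch when $i=2$. If $\ell_{\cal H}(v)\ge 3$ and $h\in S\cap V(F^{\underline{2}})$ with $h\ne f^{\underline{2}}$, item~$(\ref{ite:slim})$ gives ${\cal H}=\langle {\cal P}^2,{\cal Y}\rangle$ with ${\cal P}^2$ a minimum slim TP completion of $(F^{\underline{2}},S\cap V(F^{\underline{2}}))$; but this tree is \emph{not} ``precisely the element ${\cal J}^1$'' of $\Lambda(G,v)$ --- by definition ${\cal J}^1$ is a slim completion of $F^{\underline{1}}$ --- and ${\cal P}^2$ belongs to $\Lambda(G,v)$ only through $\Upsilon_2$, which contains it only when $|S\cap V(F^{\underline{2}})|=1$. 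So you still owe an argument ruling out $|S\cap V(F^{\underline{2}})|\ge 2$. This is where item~$(\ref{ite:rootT2})$ is needed (it is absent from your proof and, incidentally, also from the paper's citation list, but the paper invokes it at exactly this point in the correctness proof of Algorithm~\ref{alg:MinTPC}): since $\ell_{\cal H}(v)\ge 3$, item~$(\ref{ite:rootT2})$ forces $h$ to have a child in $V(F^{\underline{1}})$, whereas ${\cal H}=\langle {\cal P}^2,{\cal Y}\rangle$ with a spine of length $|S\cap V(F^{\underline{2}})|\ge 2$ places every child of $h$ inside $V(F^{\underline{2}})$; hence $|S\cap V(F^{\underline{2}})|=1$ and ${\cal P}^2\in\Upsilon_2$ after all.

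A second, smaller problem is your reading of $k$ in the ``furthermore'' part: it is not $\min\{2,\omega(F)\}$, but simply the number of peeled pieces, which can be arbitrarily large --- this is precisely what the while loop of Algorithm~\ref{alg:MinTPC} exploits when it peels $\top{B}{\underline{1}}$ level after level, and the correctness proof explicitly writes ${\cal H}=\langle {\cal X}^1,\ldots,{\cal X}^k\rangle_\Lambda$ ``for $k\ge 1$''. Accordingly, your claim that after one peel ``the next $\Lambda$-piece already fixes the remainder'' is false: take $F$ to be a single tree whose spine $r_1,\ldots,r_d$ carries a large pendant subtree at each $r_j$, with $N_G(v)=\{r_d\}$; the minimum TP completion is ${\cal F}$ with $v$ attached below $r_d$, whose $\Lambda$-decomposition has $d+1$ pieces, and items~$(\ref{ite:universal})$ and~$(\ref{ite:oneThird})$ give no contradiction since $\omega(F)=1$. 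Fortunately this side-claim is unnecessary: your plain iteration of the first assertion already proves the ``furthermore'' statement, since every member of $\Lambda$ is nonempty and the residual instance $(G-X^1,F-X^1,v)$ stays in {\sc 1TP} (every component of $F-X^1$ still meets $N_G(v)$, so $G-X^1$ is connected), so the induction goes through and terminates.
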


\begin{proof}
It is a consequence of items~$(\ref{ite:rootTiS})$,~$(\ref{ite:homogeneous})$,~$(\ref{ite:rootT1T2})$ and~$(\ref{ite:slim})$ of Lemma~\ref{lem:basic}.
\end{proof}

We discuss now some subroutines that are used in Algorithm~\ref{alg:MinTPC}. We call attention to the fact that the variables are passed to these subroutines by reference.

\begin{itemize}
\item {\sc MinOf}(${\cal Q}$): ${\cal Q}$ is a family of graphs all having the same vertex set. The algorithm returns the graph of ${\cal Q}$ with minimum number of edges.

\item {\sc Add}(${\cal T},\Gamma$): ${\cal T}$ is a rooted tree and $\Gamma$ is a family of rooted trees. 
The algorithm adds ${\cal T}$ to $\Gamma$.

\item {\sc FindTop}$(G, v, {\cal R}, \Upsilon, {\cal Q})$: Algorithm~\ref{alg:findTop}.

\item {\sc SlimTPC}($F, S$): $F$ is a trivially perfect graph and $S \subseteq V(F)$. The algorithm returns a minimum slim TP completion of $(F,S)$.

\end{itemize}

\begin{algorithm}[h]

	\caption{{\sc FindTop}}
	\label{alg:findTop}
	
	\SetKwInOut{Input}{input}
	\SetKwInOut{Output}{output}

	\Input{$(G,F,v) \in$ {\sc 1TP} and a subtree ${\cal R}$ of ${\cal F}^{\underline{1}}$ (it also receives references to the family $\Upsilon$ and the set of TP completions ${\cal Q}$ of $G$)}
	
	\Output{It updates ${\cal Q}$ and $\Upsilon$}

	${\cal T} := \langle {\cal R} , U(G - R,v) \rangle$ \label{lin:hv}
	
	$k = \min \{2, \omega(G - v)\}$

	\For{$i \in [k]$}{ \label{lin:for}
		$\Upsilon_i := \Upsilon_i(G - R, v)$ \label{lin:upsilon}

		\For{${\cal J} \in \Upsilon_i$}{
			${\cal T}' := \langle {\cal R}, {\cal J}, U(G - (R \cup J)) \rangle$

			\If{$|T'| < |T|$}{
				$T := T'$
			}
		}
	}
	{\sc Add}$(T, {\cal Q})$ \label{lin:endtopfind}
\end{algorithm}

\begin{lemma} \label{lem:findtop}
Given $(G,F,v) \in$ {\sc 1TP} and a subtree ${\cal R}$ of ${\cal F}^{\underline{1}}$, Algorithm~$\ref{alg:findTop}$ computes $\Upsilon_1(G - R, v), \Upsilon_2(G - R, v)$ and a minimum TP completion of $G$ of the form $\langle {\cal R}, {\cal T} \rangle$ with $\ell_{\cal T}(v) \le 2$ in $O(n)$ steps where $n$ is the order of $G$.
\end{lemma}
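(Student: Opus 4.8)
The plan is to fix the subtree ${\cal R}$ and reduce the statement to a constrained completion problem on $G-R$. Writing a generic candidate as $H=\langle {\cal R},{\cal T}\rangle$, the computation in Lemma~\ref{lem:twotrees} (with ${\cal T}^1={\cal R}$ and ${\cal T}^2={\cal T}$) gives $|H|=\ell_{\cal R}(\overline r)\,|{\cal T}|+|R|+|T|$. The factor $\ell_{\cal R}(\overline r)$, the summand $|R|$, and the number of vertices $|{\cal T}|=|V(G-R)|$ are all determined by the fixed ${\cal R}$, so minimising $|H|$ over all ${\cal T}$ with $\ell_{\cal T}(v)\le 2$ is the same as minimising the edge count of ${\cal T}$, i.e. finding a minimum TP completion of $G-R$ with $\ell_{\cal T}(v)\le 2$ and $v$ as base. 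Hence it suffices to prove that, among the inner trees $U(G-R,v)$ (built on line~\ref{lin:hv}) and $\langle {\cal J},U(G-(R\cup J),v)\rangle$ for ${\cal J}\in\Upsilon_i(G-R,v)$, $i\in[k]$ (built inside the loop), one attains this minimum; the comparisons $|T'|<|T|$ then keep the best and line~\ref{lin:endtopfind} stores it.

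Next I would split on the level of $v$. If $\ell_{\cal T}(v)=1$, then $v$ is the root and thus universal in ${\cal T}$; since $(G-R)-v=F-R$ is already trivially perfect, the unique minimum is $U(G-R,v)$. If $\ell_{\cal T}(v)=2$, let $t$ be the root and write ${\cal T}=\langle {\cal T}_{t,t},{\cal T}_v\rangle$. By item~$(\ref{ite:hereditary})$ of Lemma~\ref{lem:basic}, ${\cal T}_v$ is a minimum TP completion of the graph it induces, and as $v$ is its root it equals the universal completion of that graph. By item~$(\ref{ite:homogeneous})$ the top ${\cal T}_{t,t}$ lies inside one component $(F-R)^{\underline j}$, and by item~$(\ref{ite:rootTiS})$ its root $t$ is either the component root $f^{\underline j}$—in which case minimality forces ${\cal T}_{t,t}$ to be the non-neighbour top of that component—or a neighbour of $v$, in which case item~$(\ref{ite:slim})$ forces $|N_G(v)\cap V((F-R)^{\underline j})|=1$ and makes ${\cal T}_{t,t}$ the minimum slim completion of the component. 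In both cases ${\cal T}_{t,t}\in\Upsilon_j(G-R,v)$ and ${\cal T}=\langle {\cal T}_{t,t},U(G-(R\cup T_{t,t}),v)\rangle$, which is exactly the shape of the candidates generated in the loop.

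The crux is bounding the index $j$. I would compare an $\ell_{\cal T}(v)=2$ candidate directly against $U(G-R,v)$: placing the whole top of $(F-R)^{\underline j}$ above $v$ makes $t$ universal over all of ${\cal T}$, and a short edge count shows that this yields fewer edges than $U(G-R,v)$ only when $(F-R)^{\underline j}$ contains more than half of the vertices of $F-R$ (the analogous inequality, via the degree of the single neighbour, governs the slim case). At most one component can exceed half of $V(F-R)$, and it is the largest one, $(F-R)^{\underline 1}$. Therefore, whenever some $\ell_{\cal T}(v)=2$ completion beats $U(G-R,v)$, its top component is $(F-R)^{\underline 1}$, so a minimum lies in $\{U(G-R,v)\}\cup\{\langle {\cal J},U\rangle:{\cal J}\in\Upsilon_1(G-R,v)\}$; since the algorithm additionally inspects $\Upsilon_2(G-R,v)$, its candidate set contains a minimum. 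This restriction to the first two components is precisely the content of Corollary~\ref{cor:minTPC} specialised to $G-R$.

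Finally, for the side output and the running time: line~\ref{lin:upsilon} sets $\Upsilon_i:=\Upsilon_i(G-R,v)$ for $i\in[k]$, which is the requested computation of $\Upsilon_1(G-R,v)$ and $\Upsilon_2(G-R,v)$. Each non-neighbour-top member is obtained by one traversal of its component, and the only slim member that can occur has a single terminal and hence coincides with the completion that makes that terminal adjacent to its whole component; so $\Upsilon_i$ is produced in $O(n)$ rather than $O(n^2)$. The loop runs $O(1)$ times over $O(1)$ members, and each tree construction, each call to $U$, and each comparison $|T'|<|T|$ costs $O(n)$, giving $O(n)$ overall. I expect the main obstacle to be the third paragraph: matching the $\ell_{\cal T}(v)=2$ structure to a member of $\Upsilon_j$ through the items of Lemma~\ref{lem:basic}, and proving that components of index at least three can never improve on the universal completion.
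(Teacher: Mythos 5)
Your proof is correct and, at the level of overall structure, follows the paper's own route: the same reduction to a completion problem on $G-R$ (which the paper performs by simply restating the lemma, and which you justify more explicitly via the edge-count identity from Lemma~\ref{lem:twotrees}), the same split on $\ell_{\cal T}(v)\in\{1,2\}$, the same remark that the only slim member of $\Upsilon_i$ arising here has a single terminal and is obtained by making that vertex universal in its component, and the same $O(n)$ accounting on tree representations. The one genuinely different step is the $\ell_{\cal T}(v)=2$ case: the paper settles it in one line by citing Corollary~\ref{cor:minTPC}, whereas you re-derive the required shape $\langle {\cal X}, U(G-(R\cup X),v)\rangle$ with ${\cal X}\in\Upsilon_j(G-R,v)$ from items~$(\ref{ite:rootTiS})$, $(\ref{ite:homogeneous})$, $(\ref{ite:hereditary})$ and~$(\ref{ite:slim})$ of Lemma~\ref{lem:basic}, and then bound the index $j$ by a direct count showing that such a tree beats $U(G-R,v)$ only if component $j$ holds more than half the vertices of $F-R$, which forces $j=1$. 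That elementary count is a real gain: the restriction to the first two components in Corollary~\ref{cor:minTPC} rests on Lemma~\ref{lem:basic}~$(\ref{ite:rootT1T2})$, which is stated only for $\ell_{\cal H}(v)\ge 3$, so your argument supplies exactly the justification for level $2$ that the citation leaves implicit (and shows, in passing, that $\Upsilon_2$ is needed only as a side output, not for optimality of this subroutine). The one caveat—shared with the paper rather than introduced by you—is that both arguments apply structural facts proved for globally minimum completions (Lemma~\ref{lem:basic}, Corollary~\ref{cor:minTPC}) to a completion that is only minimum subject to $\ell_{\cal T}(v)\le 2$; your direct comparison against $U(G-R,v)$ softens this in the counting step, but the structural step in your second paragraph inherits the same looseness.
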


\begin{proof}
We can assume that we are using the tree representations of the graphs.	
Observe that we can rewrite the statement of the lemma as follows: given $(Y,Z,v) \in$ {\sc 1TP}, Algorithm~$\ref{alg:findTop}$ computes $\Upsilon_1(Y,v), \Upsilon_2(Y,v)$ and a minimum TP completion $T$ of $Y$ with $\ell_{\cal T}(v) \le 2$ in $O(n^2)$ steps where $n$ is the order of $Y$. 

Note that if $\{u\} = N_Y(v) \cap V(Z^{\underline{i}})$ for some $i \in [2]$, then the minimum slim TP completion of $(Z^{\underline{i}}, N_Y(v) \cap V(Z^{\underline{i}}))$ is the graph obtained by making $u$ adjacent to all vertices of 
$Z^{\underline{i}}$. This can be clearly done in $O(n)$ steps. Therefore, $\Upsilon_i(G - R, v)$ can be computed in $O(n)$ steps in line~\ref{lin:upsilon}.

It is clear that a minimum TP completion $T$ of $Y$ such that $\ell_{\cal T}(v) = 1$ is $U(Y,v)$. Such tree is computed in $O(n)$ steps in line~\ref{lin:hv}. Now, by Corollary~\ref{cor:minTPC}, a minimum TP completion $T$ of $Y$ such that $\ell_{\cal T}(v) = 2$ is the minimum tree $\langle {\cal X}^1, U(Y - X^1,v) \rangle$ which can be formed choosing ${\cal X}^1$ among the members of $\Upsilon_1(Y, v) \cup \Upsilon_2(Y, v)$. This is done in lines~\ref{lin:for} to~\ref{lin:endtopfind} of this algorithm spending $O(n)$ steps since we are working with their tree representations.
\end{proof}

Despite that the {\sc Trivially Perfect $1$-completion} has as input a graph $G$, the input of Algorithm~\ref{alg:MinTPC} is formed by a graph $G$ and a vertex $v \in V(G)$ to guarantee that the same vertex $v$ be used in all recursive calls.

\newcounter{numLinhas}

\todo{não existem mais a família upsilon}

\begin{algorithm}
	
	\caption{{\sc MinTPC}}
	\label{alg:MinTPC}
	
	\SetKwInOut{Input}{input}
	\SetKwInOut{Output}{output}
	\SetKwRepeat{Do}{do}{while}
	
	\Input{Graph $G$ and $v \in V(G)$ such that $G - v$ is a trivially perfect graph}
	\Output{A minimum TP completion of $G$}

	$F := G - v$

	$S := N_G(v)$

	$\Upsilon := \Gamma := \varnothing$

	{\sc FindTop}$(G, v, \varnothing, \Upsilon, {\cal Q})$ \label{lin:universal}

	${\cal B} := {\cal F}^{\underline{1}} - \top{F}{\underline{1}}$ \label{lin:B1}

	\If{$|{\cal F}^{\underline{1}}| < \frac{2|{\cal F}|}{3}$}{ \label{lin:23begin}
			
		\For{${\cal D}^2 \in \Upsilon_2$}{

			{\sc Add(}$\langle \top{F}{\underline{1}}, \top{B}{\underline{1}}, {\cal D}^2 \rangle, \Gamma)$ \label{lin:G1G1G2}

			\If{$|S \cap V(B^{\underline{1}})| = 1$}{
				{\sc Add(}$\langle \top{F}{\underline{1}},$ {\sc slimTPC}$({\cal B}^{\underline{1}},S \cap V({\cal B}^{\underline{1}})), {\cal D}^2 \rangle, \Gamma)$ \label{lin:G1C1G2}
			}

			\For{${\cal D}^1 \in \Upsilon_1$}{ \label{lin:E}
				${\cal L} :=$ {\sc MinOf}$(\{\langle {\cal D}^2, {\cal D}^1 \rangle, \langle {\cal D}^1,  {\cal D}^2 \rangle\})$ \label{lin:G2G1}

				{\sc Add}$(\langle {\cal L} , U(G - L,v), {\cal Q} \rangle)$ \label{lin:v3}
	
				\If{$|{\cal D}^1| = |{\cal F}^{\underline{1}}|$}{
					{\sc Add(}${\cal L}, \Gamma)$ \label{lin:G1G2big}

				}\ElseIf{$|{\cal L}| \ge \frac{|{\cal F}|}{6}$}{ \label{lin:if56}
						{\sc Add(}$\langle {\cal L} \ , \ ${\sc MinTPC}$(G - L, v) \rangle, {\cal Q})$ \label{lin:G1G2small}
				}
			}
		}
		\If{$|{\cal F}^{\underline{1}} \cap S| = 1$}{

			{\sc Add(slimTPC}$({\cal F}^{\underline{1}},S), \Gamma)$ \label{lin:G1comp}

			{\sc Add(}$\langle \top{F}{\underline{1}}, \ ${\sc slimTPC}$({\cal B}^{\underline{1}},S) \rangle, \Gamma)$ \label{lin:G11comp1}
		}
		\ElseIf{$|{\cal F}^{\underline{1}} \cap S| = 2$}{
			{\sc Add(slimTPC}$({\cal F}^{\underline{1}},S),\Gamma)$ \label{lin:G11comp2}
		}
		\For{${\cal T} \in \Gamma$}{ 
			\If{$|{\cal T}| \ge \frac{|{\cal F}|}{3}$}{ \label{lin:if23}
				{\sc add(}$\langle {\cal T} \ , \ ${\sc MinTPC}$(G - T, v) \rangle, {\cal Q})$ \label{lin:rec23}
			}
		}
		{\sc FindTop}$( G, v, \top{F}{\underline{1}}, \Upsilon, {\cal Q} )$ \label{lin:v2F3}

		{\sc FindTop}$(G, v, \langle \top{F}{\underline{1}}, \top{B}{\underline{1}} \rangle, \Upsilon, {\cal Q} )$ \label{lin:v3F3}

		\Return {\sc MinOf}$({\cal Q})$ \label{lin:best23} \label{lin:23end}
	}
	\tcc{It continues}
		
	\setcounter{numLinhas}{\value{AlgoLine}}		
\end{algorithm}

\begin{algorithm}
	\addtocounter{algocf}{-1}	
	\setcounter{AlgoLine}{\value{numLinhas}}

	\caption{continuation of {\sc MinTPC}}

	${\cal R} := \varnothing$ \label{lin:begin23}

	${\cal B} := {\cal F}$
	
	\While{$|{\cal R}| < \frac{|{\cal F}|}{3}$ {\bf and} $|{\cal B}^{\underline{1}}| \ge \frac{|{\cal B}|}{3}$}  { \label{lin:while}

		{\sc FindTop}$(G, v, {\cal R}, \Upsilon, {\cal Q})$ \label{lin:v}

		\For{${\cal Y} \in {\Upsilon}_2$}{ \label{lin:Y2}

			\If{$|\langle {\cal R}, {\cal Y}\rangle| \ge \frac{|{\cal F}|}{3}$ {\bf and} $\left(\pi_{{\cal F}^{\underline{1}}}(b^{\underline{2}}) = \overline{r} \ {\bf or} \ |\langle {\cal R}_{r,\pi(\overline{r})} , {\cal Y}\rangle| < \frac{|{\cal F}|}{3}\right)$}{ \label{lin:F2minlevel}

				\sc Add($\langle {\cal R}, {\cal Y} \rangle, \Gamma^1)$ \label{lin:RF2}		
			}
		}
		\If{$b^{\underline{1}} \not\in S$}{ \label{lin:f1S}
			
			${\cal T} := $ {\sc slimTPC}$({\cal B}^{\underline{1}}, S \cap V({\cal B}^{\underline{1}}))$ \label{lin:compF1}

			\If{$\frac{|{\cal R}|}{3} + |{\cal T}| \ge \frac{|{\cal F}|}{3}$}{ \label{lin:save13}
			
				{\sc Add}$(\langle {\cal R}, {\cal T}\rangle,\Gamma^2)$ \label{lin:saveComp}
			}
		}

		${\cal R} := \langle {\cal R}, \top{B}{\underline{1}} \rangle$ \label{lin:F1}

		${\cal B} := {\cal B} - \top{B}{\underline{1}}$ \label{lin:redefineB}
	}

	\tcc{For ${\cal M}^{\ell} \in \Gamma^1$, denote by ${\cal B}^{\ell}$ the instance of ${\cal B}^{\underline{2}}$ at the moment that ${\cal M}^{\ell}$ is added to $\Gamma^1$.}

	\For{${\cal M}^i \in \Gamma^1$} { \label{lin:M}

		\If{there are ${\cal M}^j, {\cal M}^k \in \Gamma^1$ {\bf with} $|{\cal B}^i| < |{\cal B}^j| < |{\cal B}^k|$ }{ \label{lin:2parents}

			$r' :=$ vertex such that $\pi_{{\cal F}^{\underline{1}}}(r') = \overline{m^i}$

			\For{$r'' \in [r',\pi_{{\cal F}^{\underline{1}}}(b^{\underline{2}})]_{\cal R}$}{
				{\sc Add}$(\langle {\cal M}^i, {\cal R}_{r',r''}, U(G- (V({\cal M}^i) \cup V({\cal R}_{r',r''})),v) \rangle, {\cal Q})$ \label{lin:G1a}
			}
		}
		\Else{
			{\sc Add($\langle {\cal M}^i,$ MinTPC}$(G - M^i, v) \rangle, {\cal Q})$ \label{lin:G1b}
		}
	}

	$\Gamma^3 := $ set formed by the trees of $\Gamma^2$ with minimum number of vertices \label{lin:G3aa}

	$\Gamma^4 := $ set formed by the trees of $\Gamma^2 \setminus \Gamma^3$ with minimum number of vertices

	add to $\Gamma$ one of the trees $\cal Y$ of $\Gamma^3$ such that $|Y|$ is minimum \label{lin:G3a}

	add to $\Gamma$ one of the trees $\cal Y$ of $\Gamma^4$ such that $|Y|$ is minimum \label{lin:G3b}

	\tcc{It continues}

	\setcounter{numLinhas}{\value{AlgoLine}}		
\end{algorithm}

\begin{algorithm}
\addtocounter{algocf}{-1}	
\setcounter{AlgoLine}{\value{numLinhas}}

\caption{continuation of {\sc MinTPC}}

	\If{$|{\cal R}| \ge \frac{|{\cal F}|}{3}$}{ \label{lin:testR}
		{\sc Add(${\cal R} , \Gamma)$} \label{lin:saveR}
	}
	\Else{ \label{lin:Rsmall}
		
		\For{${\cal C} \in \Upsilon_2$}{ \label{lin:T2a}

		\For{$r^* \in [\pi_{{\cal F}^{\underline{1}}}(b^{\underline{2}}), \overline{r}]_{\cal R}$}{
			${\cal D} := \langle {\cal R}_{r,\pi(r^*)} , {\cal C} , {\cal R}_{r^*}\rangle$
			
			{\sc Add(}$\langle {\cal D}, U(G - {\cal D})\rangle, {\cal Q})$ \label{lin:QF2a}
		}
	}
}
\For{${\cal Z} \in \Gamma$}{ \label{lin:bestTop}
	
	{\sc Add($\langle {\cal Z} ,$ MinTPC}$(G - Z, v)\rangle, {\cal Q})$ \label{lin:recR}

	$b := $ root of the maximum tree of ${\cal F} - {\cal Z}$ \label{lin:lastB2}

	\For{${\cal C} \in \Upsilon_2$}{ \label{lin:T2}

		${\cal D} := U(G,v)$
	
		\For{$z^* \in [\pi_{{\cal F}}(b), \overline{z}]_{\cal Z}$}{ \label{lin:merge}
			
			\If{$|E(\langle {\cal Z}_{z,\pi(z^*)} , {\cal C} , {\cal Z}_{z^*}\rangle)| < |E({\cal D})|$}{
				
				${\cal D} := \langle {\cal Z}_{r,\pi(z^*)} , {\cal C} , {\cal Z}_{z^*}\rangle$
			}
		}
				
		{\sc Add(}$\langle {\cal D} ,$ {\sc MinTPC}$(G - D, v)\rangle, {\cal Q})$ \label{lin:QF2}
		}
	}

	\Return {\sc MinOf}$({\cal Q})$ \label{lin:best}

\setcounter{numLinhas}{\value{AlgoLine}}		
\end{algorithm}

\begin{theorem}
Algorithm~$\ref{alg:MinTPC}$ is correct.
\end{theorem}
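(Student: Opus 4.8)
The plan is to prove correctness of Algorithm~\ref{alg:MinTPC} by showing two things in tandem: that every graph added to the output family ${\cal Q}$ is a genuine TP completion of $G$, and that at least one minimum TP completion of $G$ is eventually placed in ${\cal Q}$ (so that the final {\sc MinOf}$({\cal Q})$ returns a minimum one). The backbone of the argument is Corollary~\ref{cor:minTPC}: any minimum TP completion ${\cal H}$ decomposes as $\langle {\cal X}^1, \ldots, {\cal X}^k \rangle_\Lambda$ with each ${\cal X}^i \in \Lambda$, and in particular ${\cal H} = \langle {\cal X}^1, {\cal Y} \rangle$ where ${\cal Y}$ is a minimum TP completion of $G - X^1$. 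The natural proof structure is therefore an induction on $|V(G)|$ (equivalently on $|{\cal F}|$): assuming every recursive call {\sc MinTPC}$(G - T, v)$ correctly returns a minimum TP completion of its argument, I would verify that the driver produces a minimum completion of $G$ itself.

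First I would dispatch the validity direction, which is the routine half: each {\sc Add} inserts a tree of the form $\langle \cdots \rangle$ built by concatenating leading subtrees, slim TP completions (correct by Lemma~\ref{lem:slim}), universal completions $U(\cdot,v)$, and members of $\Lambda$; I would check that each such concatenation is a legitimate merge and hence a TP completion, using the decomposition identities for $\langle \cdot,\cdot\rangle$ and the fact that $U(G,v)$ is trivially perfect by construction. Then I would turn to optimality, splitting on $\ell_{\cal H}(v)$ for a fixed minimum ${\cal H}$. The cases $\ell_{\cal H}(v) \le 2$ are handled entirely by the {\sc FindTop} calls (Lemma~\ref{lem:findtop}), so the substance lies in $\ell_{\cal H}(v) \ge 3$. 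Here items~$(\ref{ite:rootTiS})$, $(\ref{ite:rootT1T2})$ and~$(\ref{ite:slim})$ of Lemma~\ref{lem:basic} confine $h$ to $\{f^{\underline 1}, f^{\underline 2}, s_1, s_2\}$, and item~$(\ref{ite:oneThird})$ gives $|{\cal F}^{\underline 1}| > |{\cal F}|/3$, which is exactly the threshold governing the branch at line~\ref{lin:23begin} and the guards $|{\cal T}| \ge |{\cal F}|/3$, $|{\cal L}| \ge |{\cal F}|/6$ that gate the recursive calls.

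The heart of the argument is to show the two regimes are exhausted: the ``top-heavy'' regime $|{\cal F}^{\underline 1}| \ge 2|{\cal F}|/3$, handled by the {\sc while} loop that peels off $\top{B}{\underline 1}$ into the growing prefix ${\cal R}$; and the complementary regime, handled by the upper block ending at line~\ref{lin:23end}. In each, I would argue that the prefix ${\cal X}^1$ of ${\cal H}$ (or a short initial segment $\langle {\cal X}^1, {\cal X}^2\rangle$) coincides with one of the candidates enumerated into $\Gamma$, $\Gamma^1$, or $\Gamma^2$, invoking Lemma~\ref{lem:basic}$(\ref{ite:hereditary})$ to transfer minimality to the suffix ${\cal Y}$ and then the induction hypothesis on {\sc MinTPC}$(G - X^1, v)$. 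The use of Lemma~\ref{lem:average} and Lemma~\ref{lem:twotrees} is essential to argue that the \emph{leading} merge realized by {\sc SlimTPC} and by the $\langle {\cal D}^2, {\cal D}^1\rangle$ versus $\langle {\cal D}^1, {\cal D}^2\rangle$ comparison at line~\ref{lin:G2G1} is optimal among all orderings, so that restricting to leading merges loses nothing.

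The main obstacle I anticipate is the polynomial-time \emph{pruning} built into the candidate enumeration: the algorithm does not try every admissible $\top{B}{\underline 1}$-prefix together with every recursive expansion, but instead keeps only the minimum-cardinality witnesses ($\Gamma^3$, $\Gamma^4$) and gates recursion behind the size thresholds. The delicate step is proving that this pruning is \emph{optimality-preserving} — that whenever ${\cal H}$'s true prefix is discarded, some retained candidate yields a completion of equal cost. This requires a careful exchange argument: given the optimal prefix ${\cal M}^\ell$ with associated second-tree size ${\cal B}^\ell$, one shows via Lemma~\ref{lem:average} that if there are two strictly larger ${\cal B}^j, {\cal B}^k$ (the condition at line~\ref{lin:2parents}) then the optimal continuation reattaches ${\cal F}^{\underline 2}$ within a bounded interval $[r', \pi_{{\cal F}^{\underline 1}}(b^{\underline 2})]_{\cal R}$, enabling the non-recursive enumeration at line~\ref{lin:G1a}; otherwise the bounded number of surviving candidates makes the recursion at line~\ref{lin:G1b} affordable. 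Verifying that these interval and threshold bookkeeping conditions exactly capture the structure forced by Lemma~\ref{lem:basic} — in particular the level-$|{\cal F}|/3$ and level-$|{\cal F}|/6$ cutoffs and the $\pi_{{\cal F}^{\underline 1}}(b^{\underline 2})$ ancestry constraints at lines~\ref{lin:F2minlevel} and~\ref{lin:merge} — is where the bulk of the case analysis will live, and is the step most likely to require the full strength of every item of Lemma~\ref{lem:basic}.
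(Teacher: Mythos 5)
Your skeleton matches the paper's proof exactly: fix a minimum completion ${\cal H}$ with base $v$, apply Corollary~\ref{cor:minTPC}, dispose of $\ell_{\cal H}(v) \le 2$ via {\sc FindTop} (Lemma~\ref{lem:findtop}), and then show, in each of the two regimes $|{\cal F}^{\underline{1}}| < \frac{2|{\cal F}|}{3}$ and $|{\cal F}^{\underline{1}}| \ge \frac{2|{\cal F}|}{3}$, that the prefix ${\cal X}^1$ (or a short initial segment) of ${\cal H}$ coincides with a retained candidate, with minimality of the suffix transferred by Lemma~\ref{lem:basic}~$(\ref{ite:hereditary})$ and an implicit induction over the recursive calls. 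However, your proposal stops precisely where the proof begins: you name the optimality-preservation of the pruning as the step ``most likely to require the full strength of every item of Lemma~\ref{lem:basic}'' but you do not supply it, and that step is essentially the whole content of the theorem. In the paper it occupies three claims proved inside the theorem (Claim~\ref{cla:pivot}, Claim~\ref{cla:2ormore} and Claim~\ref{cla:atmost2}) plus several further counting arguments in the regime $|{\cal F}^{\underline{1}}| < \frac{2|{\cal F}|}{3}$, e.g.\ that $|{\cal H}_{h,v_2}| < \frac{|{\cal F}|}{6}$ is impossible when line~\ref{lin:G1G2small} is skipped, and that $|S \cap V(F^{\underline{1}})| \ge 3$ is impossible.

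Moreover, the tool you designate for this step would not suffice. You attribute the needed exchange arguments to Lemmas~\ref{lem:twotrees} and~\ref{lem:average}, but those lemmas only compare different orderings of a fixed merge; they cannot justify the thresholds $\frac{|{\cal F}|}{3}$ and $\frac{|{\cal F}|}{6}$, nor bound the number of candidates surviving in $\Gamma^1$ and $\Gamma^2$. What the paper actually uses throughout is a different and more elementary engine: the benchmark $fill(U(G,v)) < |V(F)|$. Each discarded configuration is shown, by directly counting fill edges incident to two or three forced ``pivot'' vertices (such as $h$, $v_2$, $v_3$, or the independent set $\{s_1,s_2,s_3\}$ in Claim~\ref{cla:atmost2}), to force at least $|V(F)|$ fill edges, contradicting minimality because the universal completion is itself placed in ${\cal Q}$ at line~\ref{lin:universal}; Claim~\ref{cla:2ormore} additionally rests on Lemma~\ref{lem:basic}~$(\ref{ite:F2root})$ and~$(\ref{ite:boundF2})$. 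This counting-against-the-universal-completion idea is the missing ingredient: without it your plan has no mechanism to rule out the pruned candidates, so as written the proposal is an accurate roadmap of the paper's argument rather than a proof of it.
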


\begin{proof}
Let $H$ be a minimum TP completion of $G$ and set $v$ as the base of ${\cal H}$. According to Corollary~\ref{cor:minTPC}, there is ${\cal X}^1 \in \Lambda(G,v)$ such that ${\cal H} = \langle {\cal X}^1,{\cal Y} \rangle$ where ${\cal Y}$ is a minimum TP completion of $G - X^1$.
The algorithm is divided into two cases. If $|{\cal F}^{\underline{1}}| < \frac{2|{\cal F}|}{3}$, then it finishes in line~\ref{lin:best23}; otherwise, it finishes in line~\ref{lin:best}.
In both lines, a call to {\sc MinOf} is made where the family ${\cal Q}$ containing some TP completions of $G$ is passed as parameter. Therefore, we have to show that $H$ belongs to ${\cal Q}$ when one of these two lines is reached.

By Lemma~\ref{lem:findtop}, the possibilities for $\ell_{\cal H}(v) \le 2$ are considered in line~\ref{lin:universal} because in the call to {\sc FindTop} done in this line we have $V({\cal R}) = \varnothing$. From now on, we assume that $\ell_{\cal H}(v) \ge 3$. Hence, Lemma~\ref{lem:basic}~$(\ref{ite:rootT1T2})$ implies that $h \in V({\cal F}^{\underline{1}}) \cup V({\cal F}^{\underline{2}})$, more precisely, ${\cal X}^1 \in \Upsilon_1 \cup \Upsilon_2$. We denote by $v_2$ the ancestral of $v$ at level 2. If $\ell_{\cal H}(v) \ge 4$, denote by $v_3$ the ancestral of $v$ at level~3.

\bigskip \noindent {\bf Case 1:} $|{\cal F}^{\underline{1}}| < \frac{2|{\cal F}|}{3}$ (lines~\ref{lin:23begin} to~\ref{lin:23end}).

In lines~\ref{lin:G1G1G2},~\ref{lin:G1C1G2},~\ref{lin:G1G2big},~\ref{lin:G1comp},~\ref{lin:G11comp1} and~\ref{lin:G11comp2}, some rooted trees are saved in the family $\Gamma$. In line~\ref{lin:rec23}, for each ${\cal T}$ added to $\Gamma$ with at least $\frac{|{\cal F}|}{3}$ vertices, a recursive call with input $G - {\cal T}$ is done, which returns as result a tree ${\cal T}^*$. Then, the TP completion $\langle {\cal T}, {\cal T}^* \rangle$ is saved in ${\cal Q}$ as a candidate to be a minimum TP completion of $G$. Other TP completions of $G$ are also saved in ${\cal Q}$ in lines~\ref{lin:v3} and~\ref{lin:G1G2small} and in the calls to {\sc FindTop} (lines~\ref{lin:universal},~\ref{lin:v2F3} and~\ref{lin:v3F3}). In line~\ref{lin:23end}, a TP completion contained in ${\cal Q}$ with minimum number of edges is chosen as a minimum TP completion of $G$. Therefore, we have to prove that ${\cal H}$ is added to ${\cal Q}$ in one of these lines.

We need to show that a tree ${\cal T}$ saved in $\Gamma$ cannot satisfies ${\cal H} = \langle {\cal T}, {\cal T}^* \rangle$ if $|{\cal T}| < \frac{|{\cal F}|}{3}$.
By Lemma~\ref{lem:basic}~$(\ref{ite:oneThird})$, each tree added in one of the lines~\ref{lin:G1G2big},~\ref{lin:G1comp},~\ref{lin:G11comp1} and~\ref{lin:G11comp2} has at least $\frac{|{\cal F}|}{3}$ vertices because it contains $V(F^1)$.
Note that each tree ${\cal T}$ added to $\Gamma$ in line~\ref{lin:G1G1G2} or~\ref{lin:G1C1G2} is formed from vertices of three trees ${\cal T}^1, {\cal T}^2$ and ${\cal T}^3$ such that ${\cal T}^1$ is a subtree of ${\cal F}^{\overline{1}}$ and ${\cal T}^2$ and ${\cal T}^3$ are subtrees of different trees of ${\cal F}$.
Thus, suppose by contradiction that ${\cal T} = {\cal H}_{h,v_3}$ was added to $\Gamma$ in line~\ref{lin:G1G1G2} or~\ref{lin:G1C1G2} having less than $\frac{|{\cal F}|}{3}$ vertices.
Since $h \in V(F^{\underline{1}})$, $H$ has at least $\frac{|{\cal F}|}{3}$ fill edges with one extreme in $h$ because $|{\cal F}^{\underline{1}}| < \frac{2|{\cal F}|}{3}$.
Furthermore, $H$ has other $\frac{2|{\cal F}|}{3}$ fill edges, each with one extreme in $v_2$ or $v_3$ and the other extreme in ${\cal H} - {\cal H}_{h,v_3}$, because every vertex of ${\cal H} - {\cal H}_{h,v_3}$ has a non-neighbor in $G$ belonging to $\{v_2,v_3\}$ since $v_2$ and $v_3$ belong to different tres of ${\cal F}$ and $|{\cal H}_{h,v_3}| < \frac{|{\cal F}|}{3}$, which is a contradiction because this implies that $fill(H) \ge |V(F)| > fill(U(G,v))$.

First, consider that $h \in V(F^{\underline{2}})$. Since $\ell_{\cal H}(v) \ge 3$, Lemma~\ref{lem:basic}~$(\ref{ite:rootT2})$ implies that $v_2 \in V(F^{\underline{1}})$. Because of Lemma~\ref{lem:basic}~$(\ref{ite:slim})$, the possible cases for $\top{H}{}$ are the members of $\Upsilon_2$. Recall that $\Upsilon_i$ for $i \in [\omega(F)]$ was constructed in line~\ref{lin:universal} and contains at most two members, one is $\top{F}{\underline{i}}$ and the other is the slim TP completion of $(F^{\underline{i}}, N_G(v) \cap V(F^{\underline{i}}))$ if $|S \cap V(F^{\underline{i}})| = 1$.

All possibilities for $V(\top{H}{}) \subseteq V(F^{\underline{2}})$ and $V({\cal H}_{v_2,v_2}) \subseteq V(F^{\underline{1}})$ are considered in lines~\ref{lin:v3}, \ref{lin:G1G2big} and~\ref{lin:G1G2small}. In fact, note that line~\ref{lin:G1G2small} is executed only if $|{\cal H}_{h,v_2}| \ge \frac{|{\cal F}|}{6}$, condition that is checked in line~\ref{lin:if56}.
Therefore, suppose by contradiction that $|{\cal H}_{h,v_2}| < \frac{|{\cal F}|}{6}$.
Since the case $\ell_{\cal H}(v) = 3$ is covered in line~\ref{lin:v3}, we can assume that $\ell_{\cal H}(v) \ge 4$.
Since $h \in V(F^{\overline{2}})$ and $v_2  \in V(F^{\overline{1}})$, $H$ has at least $\frac{5|{\cal F}|}{6}$ fill edges each of them with exactly one extreme in $\{h,v_2\}$ because each vertex of ${\cal H} - {\cal H}_{h,v_2}$ has a fill edge to $h$ or to $v_2$.
Now, Lemma~\ref{lem:basic}~$(\ref{ite:oneThird})$ implies that $|W^1| \ge \frac{|{\cal F}|}{6}$ where $W^1 = V(F^{\underline{1}}) \setminus V({\cal H}_{v_2,v_2})$.
The assumptions that $|{\cal F}^{\underline{1}}| < \frac{2|{\cal F}|}{3}$ and $|{\cal H}_{h,v_2}| < \frac{|{\cal F}|}{6}$ imply that $|W^2| \ge \frac{|{\cal F}|}{6}$ where
$W^2 = V({\cal H}_{v_3}) \setminus (V(F^{\underline{1}}) \cup V(F^{\underline{2}}))$.
Since $\ell_{\cal H}(v) \ge 4$, if $v_3 \in V(F^{\underline{1}})$, then $H$ has at least $\frac{|{\cal F}|}{6}$ fill edges all of them having one extreme in $v_3$ and the other in $W^2$;
and if $v_3 \not\in V(F^{\underline{1}})$, then $H$ has at least $\frac{|{\cal F}|}{6}$ fill edges all of them having one extreme in $v_3$ and the other in $W^1$. Therefore, $fill(H) \ge |V(G)| > fill(U(G,v))$, which is a contradiction.

Consider now that $h \in V(F^{\underline{1}})$.
By Corollary~\ref{cor:minTPC}, ${\cal X}^1 \in \{\top{F}{\underline{1}}, {\cal J}\}$, where ${\cal J}$ is a minimum slim TP completion of $(F^{\underline{1}}, N_G(v) \cap V(F^{\underline{1}}))$.
First, assume that ${\cal X}^1 = {\cal J}$.
The case $|S \cap V(F^{\underline{1}})| = 1$ is considered in line~\ref{lin:G1comp}.
The case $|S \cap V(F^{\underline{1}})| = 2$ is considered in lines~\ref{lin:G11comp1} and~\ref{lin:G11comp2}.
At last, assume by contradiction that $|S \cap V(F^{\underline{1}})| \ge 3$. Therefore, $\{h,v_2,v_3\} \subset V(F^1)$ and every vertex of $F - F^{\underline{1}}$ has three fill edges each one with an extreme in each vertex of $\{h,v_2,v_3\}$. Since $|{\cal F}^{\underline{1}}| < \frac{2|{\cal F}|}{3}$, it holds that $|{\cal F} - {\cal F}^{\underline{1}}| \ge \frac{|{\cal F}|}{3}$ and then that $fill(H) \ge |V(F)| > fill(U(G,v))$, which is a contradiction.

Now, assume that ${\cal X}^1 = \top{F}{\underline{1}}$. The possible cases for $v_2 \in V(F^{\underline{2}})$ are considered in lines~\ref{lin:v3}, \ref{lin:G1G2big} and~\ref{lin:G1G2small}.
As noted above, line~\ref{lin:G1G2small} is executed only if $|{\cal H}_{h,v_2}| \ge \frac{|{\cal F}|}{6}$, condition that is checked in line~\ref{lin:if56}. The same proof used above does hold here to show that $|{\cal H}_{h,v_2}| < \frac{|{\cal F}|}{6}$ cannot happen.
The cases for $v_2 \in \{v\}$ or $v_2 \in V(F^i)$ for $i \in \{3, \ldots, \omega(F)\}$ are considered in line~\ref{lin:v2F3}.
The case $V({\cal H}_{v_2,v_2}) = V(F^{\underline{1}}) \setminus V(F^{\underline{1}}_{f^{\underline{1}},f^{\underline{1}}})$ is considered in line~\ref{lin:G11comp1}. Then, we can assume that ${\cal H}_{v_2,v_2} = \top{B}{\underline{1}}$ where ${\cal B} = {\cal F}^{\underline{1}} - \top{F}{\underline{1}}$.
The cases for $v_3 \in \{v\}$ or $v_3 \in V(F^i)$ for $i \in \{3, \ldots, \omega(F)\}$ are considered in line~\ref{lin:v3F3}. Since we have already shown that $\{h,v_2,v_3\} \not\subseteq V(F^{\underline{1}})$, we can assume that $v_3 \in V(F^{\underline{2}})$. The possibilities for this case are considered in lines~\ref{lin:G1G1G2} and~\ref{lin:G1C1G2}.

\bigskip \noindent {\bf Case 2:} $|{\cal F}^{\underline{1}}| \ge \frac{2|{\cal F}|}{3}$ (lines~\ref{lin:begin23} to~\ref{lin:best}).

From now on, when we refer to ${\cal R}, {\cal B}$ and $\Gamma^1$ without mentioning the line of the algorithm we are considering, assume that their values are the instances of the homonym variables of the algorithm when the while loop finishes. The conditions of the while loop (line~\ref{lin:while}) and $|{\cal F}^{\underline{1}}| \ge \frac{2|{\cal F}|}{3}$ imply that $V(R) \subseteq V(F^{\underline{1}})$.
We need some claims.

The first claim deals with the case where $\overline{r}$ is ancestral of $v$ in ${\cal H}$ and has two ancestrals in ${\cal H}$ that are not ancestrals of $\overline{r}$ in ${\cal F}$.

\begin{claim} \label{cla:pivot}
If there are $w_1, w_2 \not\in [f^{\underline{1}}, \overline{r}]_{{\cal F}^{\underline{1}}}$ such that
$w_1 \in [h,w_2)_{\cal H}$ and
$w_2 \in (w_1,\overline{r})_{\cal H}$, then
there are $x_1,x_2 \in [r,\overline{r})_{\cal R}$ such that
${\cal H}_{h,w_2} = \langle {\cal R}_{r,\pi(x_1)}, W_1, {\cal R}_{x_1,x_2}, W_2\rangle$,
$W_1 \in \Upsilon_2(G - {\cal H}_{h,\pi(x_1)},v)$,
$W_2 \in \Upsilon_2(G - {\cal H}_{h,x_2},v)$ and
$|\langle {\cal R}_{r,\pi(x_1)}, W_1\rangle| \ge \frac{|{\cal F}|}{3}$.
\end{claim}

\begin{proofClaim}{cla:pivot}
Corollary~\ref{cor:minTPC} and the construction of ${\cal R}$ (lines~\ref{lin:begin23} and~\ref{lin:F1}) imply that ${\cal H}_{h,w_2} = \langle {\cal R}_{r, \pi(x_1)}$, $W_1, {\cal R}_{x_1,x_2}, W_2\rangle$,
$W_1 \in \Upsilon_2(G - {\cal H}_{h,\pi(x_1)},v)$ and
$W_2 \in \Upsilon_2(G - {\cal H}_{h,x_2},v)$.

Now, suppose by contradiction that $|\langle {\cal R}_{r,\pi(x_1)} , W_1\rangle| < \frac{|{\cal F}|}{3}$.
If $w_1 = h$, then $w_1 \not\in V(F^{\underline{1}})$ because 
$W_1 \in \Upsilon_2(G - {\cal H}_{h,\pi(x_1)},v)$
and ${\cal H}_{h,\pi(x_1)} = \varnothing$, which means that
$H$ has at least $\frac{2|{\cal F}|}{3}$ fill edges each with one extreme in $w_1$ because $|F^{\underline{1}}| \ge \frac{2|{\cal F}|}{3}$. If $w_1 \ne h$, then $H$ has at least $\frac{2|{\cal F}|}{3}$ fill edges each with one extreme in $w_1$ or $h = f^{\underline{1}}$ because $|\langle {\cal R}_{r,x_1} , W_1\rangle| < \frac{|{\cal F}|}{3}$ and every vertex of ${\cal H}_{x_1}$ has in $G$ one of these two vertices as a non-neighbor. Next, note that $|{\cal F}^{\underline{1}}_{\overline{r}}| \ge \frac{|{\cal F}|}{3}$. Since every vertex of ${\cal F}^{\underline{1}}_{\overline{r}}$ has a fill edge to $w_2$, there are at least $\frac{|{\cal F}|}{3}$ fill edges with both extremes in ${\cal H}_{w_2}$, which means that $fill(H) \ge |{\cal F}| > fill(U(G,v))$, a contradiction.
\end{proofClaim}

The second lemma says that for all but to at most two trees added to $\Gamma^1$ in line~\ref{lin:RF2}, a minimum TP completion is given by the universal TP completion.

\begin{claim} \label{cla:2ormore}
Denote by ${\cal R}^{\ell}, {\cal B}^{\ell}$ and $\Upsilon^{\ell}$ the instances of ${\cal R}, {\cal B}^{\underline{2}}$ and $\Upsilon_2$, respectively, in line~$\ref{lin:Y2}$ of the ${\ell}$-th iteration of the while loop.
If there are positive integers $i < j < k$ such that 
$|{\cal B}^i| < |{\cal B}^j| < |{\cal B}^k|$,
${\cal M}^{\ell} = \langle {\cal R}^{\ell}, {\cal Y}^{\ell} \rangle\in \Gamma^1$ for $\ell \in \{i,j,k\}$ where
${\cal Y}^{\ell} \in \Upsilon^{\ell}$
and ${\cal H} = \langle {\cal M}^i, {\cal P} \rangle$, then ${\cal P} = \langle {\cal R}_{r',r''}, U(G- (V({\cal M}^i) \cup V({\cal R}_{r',r''})) \rangle$ where
$\pi_{\cal R}(r') = \overline{r_i}$ and
$r'' \in [\overline{r_i},\pi_{{\cal F}^{\underline{1}}}(b^j)]_{\cal R}$.
\end{claim}

\begin{proofClaim}{cla:2ormore}
We begin remarking that the choice	$r'' =\overline{r_i}$ corresponds to the case where ${\cal R}_{r',r''} = \varnothing$. By the construction, we know that
${\cal R}^i \subset {\cal R}^j \subset {\cal R}^k$ and
$V({\cal Y}^{\ell}) \subseteq V({\cal B}^{\ell})$ for $\ell \in \{i,j,k\}$, that
$\pi_{{\cal F}^{\underline{1}}}(b^j) \in (\overline{r_i}, \pi_{{\cal F}^{\underline{1}}}(b^k))_{\cal R}$, that
$\pi_{{\cal F}^{\underline{1}}}(b^k) \in (\pi_{{\cal F}^{\underline{1}}} (b^j), \overline{r})_{\cal R}$, and that
${\cal B}^j$ and ${\cal B}^k$ have disjoint vertex subsets of $V({\cal F}^{\underline{1}})$ both containing vertices of $S$, see Figure~\ref{fig:R}.

Suppose by contradiction that the claim is false, i.e.,
${\cal P} \ne \langle {\cal R}_{r',r''}, U(G- (V({\cal M}^i \cup V({\cal R}_{r',r''})) \rangle$
for any choice of $r''$ in $[\overline{r_i},\pi_{{\cal F}^{\underline{1}}}(b^j)]_{\cal R}$.
Applying Lemma~\ref{lem:basic}~$(\ref{ite:F2root})$ on ${\cal H}_{y^i}$, we conclude that the number of fill edges with both extremes in ${\cal H}_p = {\cal P}$ is less than $|{\cal B}^i|$.
On the one hand, ${\cal P} = \langle {\cal R}_{r',\pi(r^*)}$, ${\cal S}', {\cal K}' \rangle$, where ${\cal S}'$ is a minimum slim TP completion of $(F^{\underline{1}}_{r^*}, N_G(v) \cap V(F^{\underline{1}}_{r^*}))$ for some $r^* \in [r',r'']_{\cal R}$ and ${\cal K}'$ is a minimum TP completion of $G - (V({\cal M}^i) \cup V({\cal R}_{r',\pi(r^*)}) \cup V({\cal S}'))$.
We have a contradiction in the case because the construction of ${\cal S}'$ needs of at least $|{\cal B}^j|$ fill edges and $|{\cal B}^j| > |{\cal B}^i|$. 
On the other hand, ${\cal P} = \langle {\cal R}_{r',r''}, {\cal K}'' \rangle$
where ${\cal K}''$ is a minimum TP completion of $G - (V({\cal M}^i) \cup V({\cal R}_{r',r''})$.
Lemma~\ref{lem:basic}~$(\ref{ite:boundF2})$ leads to a contradiction in this case because the second greatest tree of $G - ({\cal M}^i \cup {\cal R}_{r',r''})$ is ${\cal B}^j$ and $|{\cal B}^j| \ge |{\cal B}^i|$.
\end{proofClaim}

\begin{figure}[h]
\begin{center}

\usetikzlibrary{shapes.geometric}

\begin{tikzpicture}[scale=1]

\pgfsetlinewidth{1pt}

\tikzset{
	vertex/.style={circle,  draw, minimum size=5pt, inner sep=1pt}}

\tikzset{
	bigVertex/.style={circle,  draw, minimum size=0.5cm, inner sep=1pt}}

\node [vertex] (h) at (4,5) [label=right:$f^{\underline{1}}$]{};
\node [vertex] (u1) at (4,4) [label=right:$b^{\underline{1}}_{(2)}$]{} edge (h);
\node [vertex] (u2) at (4,3) [label=right:$b^{\underline{1}}_{(3)}$]{} edge (u1);
\node [vertex] (u3) at (4,2) [label=right:$b^{\underline{1}}_{(4)}$]{} edge (u2);
\node [vertex] (u4) at (4,1) [label=left:$b^{\underline{1}}_{(5)}$]{} edge (u3);
\node [vertex] (r) at (4,0) [label=left:$\overline{r}$]{} edge (u4);

\node [bigVertex] (x) at (3,4) [dashed, label=right:$$]{} edge (h);
\node [bigVertex] (x) at (3,3) [dashed, label=right:$$]{} edge (u1);
\node [bigVertex] (x) at (6,-3) [minimum size=3.2cm, label=left:$$]{ } edge (r);
\node [bigVertex] (x) at (3,-3) [dashed, minimum size=2cm, label=left:$$]{ } edge (r);

\node [bigVertex] (F3) at (16,3) [minimum size=1cm, label=above:${\cal F}^{\underline{3}}$] {};

\node [bigVertex] (F2) at  (12.5,2.5) [minimum size=1.8cm, label=above:${{\cal F}^{\underline{2}} = {\cal B}_{(1)}^{\underline{2}} = {\cal B}_{(2)}^{\underline{2}}}$] {};

\node [bigVertex] (B22) at  (8,2) [minimum size=2.6cm, label=above:${{\cal B}_{(3)}^{\underline{2}} = {\cal B}_{(4)}^{\underline{2}}}$] {};

\draw (u1) to (B22);

\node [bigVertex] (B24) at  (10,-2) [minimum size=3.2cm, label=right:${{\cal B}_{(5)}^{\underline{2}} = {\cal B}_{(6)}^{\underline{2}}}$] {};

\draw (u3) to (B24);

\end{tikzpicture}

\caption{Dashed circles mean subtrees containing no vertices of $S$.
For every instance with $\omega({\cal F}) \ge 2$, it holds ${\cal F}^{\underline{2}} = {\cal B}_{(1)}^{\underline{2}}$.
In this instance, $\omega({\cal F}) = 3$. We also have ${\cal F}^{\underline{2}} = {\cal B}_{(2)}^{\underline{2}}$ because ${\cal F}^{\underline{2}}$ is the second greatest tree of $G - \top{F}{1}$. However, ${\cal F}^{\underline{2}} \ne {\cal B}_{(3)}^{\underline{2}}$ because ${\cal F}^{\underline{2}}$ is not the second greatest tree of $G - {\cal F}^{\underline{1}}_{f^{\underline{1}},u_1}$. The number of iterations of the while loop is 6 which is the level of $\overline{r}$ in ${\cal F}^{\underline{1}}$.}
\label{fig:R}
\end{center}
\end{figure}

\begin{claim} \label{cla:atmost2}
Only the two smallest sizes of trees ${\cal M}$ added to family $\Gamma^2$ in line~$\ref{lin:saveComp}$ can satisfy ${\cal H} = \langle {\cal M}, {\cal K} \rangle$ where ${\cal K}$ is a minimum TP completion of $G - V({\cal M})$.
\end{claim}

\begin{proofClaim}{cla:atmost2}
Suppose that ${\cal M}^1, {\cal M}^2, {\cal M}^3 \in \Gamma^2$ when line~$\ref{lin:saveComp}$ is reached with $|{\cal M}^1| > |{\cal M}^2| > |{\cal M}^3|$. Note that ${\cal M}^i$ was added to $\Gamma^2$ before ${\cal M}^{i+1}$ for $i \in [2]$.
For $i \in [3]$, write
${\cal M}^i = \langle {\cal R}^i, {\cal T}^i \rangle$
where ${\cal T}^i$ is the slim TP completion constructed in line~\ref{lin:compF1} and
${\cal H}^i = \langle {\cal M}^i, {\cal K}^i \rangle$ where
${\cal K}^i$ is a minimum TP completion of $G - V({\cal M}^i)$. Denote by ${\cal W}^i$ the subtree of ${\cal F}^{\underline{1}}$ induced by the vertices of ${\cal T}^i$.
We know that ${\cal R}^1 \subset {\cal R}^2 \subset {\cal R}^3$, that ${\cal T}^3 \subset {\cal T}^2 \subset {\cal T}^1$ and that ${\cal P}^1 \subset {\cal P}^2 \subset {\cal P}^3$ where ${\cal P}^i = {\cal F}^{\underline{1}} - ({\cal R}^i \cup {\cal T}^i)$, see Figure~\ref{fig:R}.

Since ${\cal R}^2 \cup {\cal T}^2 \subset {\cal R}^1 \cup {\cal T}^1$ and ${\cal R}^1 \subset {\cal R}^2$, there exists $s_1 \in S \cap V({\cal T}_1 - ({\cal R}_2 \cup {\cal T}_2))$. Analogously, there exist $s_2 \in V({\cal T}_2 - ({\cal R}_3 \cup {\cal T}_3)) \cap S$ and $s_3 \in V({\cal T}_3) \cap S$. Note that $\{s_1,s_2,s_3\}$ is an independent set of $G$.

Let $w_a$ be the vertex of ${\cal W}^1$ with minimum level such that $s_1 \in V({\cal W}^1_{w_a})$ and $s_2 \not\in V({\cal W}^1_{w_a})$; and let $w_b$ be the vertex of ${\cal W}^1$ with minimum level such that $s_2 \in V({\cal W}^1_{w_b})$ and $s_1 \not\in V({\cal W}^1_{w_b})$.
Recall that ${\cal T}^1$ is a minimum slim TP completion of $(W^1, N_G(v) \cap V(W^1))$. Therefore, $s_1$ and $s_2$ are universal vertices in ${\cal T}^1$. Since $w_a$ and $w_b$ are siblings and $V({\cal W}^2) \subseteq V({\cal W}^1_{w_b})$, it holds that there exist at least $|{\cal T}^2|$ fill edges in $H$ each one joining $s_1$ to the vertices of ${\cal W}^1_{w_b}$.

Let $w_c$ be the vertex of ${\cal W}^2$ with minimum level such that $s_2 \in V({\cal W}^2_{w_c})$ and $s_3 \not\in V({\cal W}^2_{w_c})$; and let $w_d$ be the vertex of ${\cal W}^2$ with minimum level such that $s_3 \in V({\cal W}^2_{w_d})$ and $s_2 \not\in V({\cal W}^2_{w_d})$.
Recall that ${\cal T}^2$ is a minimum slim TP completion of $(W^2, N_G(v) \cap V(W^2))$. Therefore, $s_2$ and $s_3$ are universal vertices in ${\cal T}^2$.
Since $w_c$ and $w_d$ are siblings and $V({\cal W}^3) \subseteq V({\cal W}^2_{w_d})$, it holds that there exist at least $|{\cal T}^3|$ fill edges in $H$ each one joining $s_2$ to the vertices of ${\cal W}^2_{w_d}$.

Note that every vertex $u \in V({\cal P}^3)$ has the three fill edges $us_1,us_2,us_3$ in ${\cal H}^1$. Hence, we have that the number of fill edges joining vertices of ${\cal F}^{\underline{1}}$ is at least $|{\cal T}^2| + |{\cal T}^3| + 3|{\cal P}^2|$. 
By the conditions of the while loop (line~\ref{lin:while}) and the fact that $|{\cal F}^{\underline{1}}| \ge \frac{2|{\cal F}|}{3}$, we have that $|{\cal R}^i| < \frac{|{\cal F}^{\underline{1}}|}{2}$, which means that $|{\cal T}^i| + |{\cal P}^i| \ge \frac{|{\cal F}^{\underline{1}}|}{2}$ for $i \in [3]$.
Since $|{\cal P}^3| > |{\cal P}^2| > |{\cal P}^1|$, we have that $|{\cal T}^2| + |{\cal T}^3| + 3|{\cal P}^2| \ge |{\cal F}^{\underline{1}}|$.
Since every vertex not in $F^{\underline{1}}$ has a fill edge to $h$, it holds that $fill(H) > |V(F)| > fill(U(G,v))$, a contradiction.
\end{proofClaim}

As in the case where $|{\cal F}^{\underline{1}}| < \frac{2|{\cal F}|}{3}$, some rooted trees are saved in the family $\Gamma$ (lines~\ref{lin:saveComp} and~\ref{lin:saveR}). Then, in line~\ref{lin:recR}, for each ${\cal T}$ added to $\Gamma$, a recursive call is done obtaining as result a tree ${\cal T}'$. Then, the TP completion $\langle {\cal T} , {\cal T}' \rangle$ is saved in ${\cal Q}$ as a candidate to be a minimum TP completion of $G$. Other TP completions of $G$ are saved in ${\cal Q}$ in lines~\ref{lin:RF2} and~\ref{lin:RU} and in the call to {\sc FindTop} (line~\ref{lin:v}). In line~\ref{lin:best}, the TP completion with minimum number of edges contained in ${\cal Q}$ is chosen as a minimum TP completion of $G$. Therefore, we have to show that ${\cal H}$ is added to ${\cal Q}$ in one of these lines.

By Corollary~\ref{cor:minTPC}, we can write ${\cal H} = \langle {\cal X}^1 , \ldots , {\cal X}^k \rangle_\Lambda$ for $k \ge 1$. Let $p \in [k]$ be such that $\overline{r} \in V({\cal X}^p)$. We consider 3 subcases:

\bigskip \noindent {\bf Case 2.1:} $V(X^i) \subseteq V(F^{\underline{1}})$ for every $i \in [p-1]$.

First, consider that ${\cal X}^p = {\cal R}_{\overline{r}}$.
The case where $|{\cal R}| \ge \frac{|{\cal F}|}{3}$ is considered in line~\ref{lin:saveR}.
If $|{\cal R}| < \frac{|{\cal F}|}{3}$, then $|{\cal B}^{\underline{1}}| < \frac{|{\cal B}|}{3}$, which means that if ${\cal H} = \langle {\cal R}, {\cal K} \rangle$ where ${\cal K}$ is a minimum TP completion of $G - V({\cal R})$, then Lemma~\ref{lem:basic}~$(\ref{ite:oneThird})$ implies that $\ell_{\cal K}(v) \le 2$. This case is covered in line~\ref{lin:RU}.

Now, consider that ${\cal X}^p \ne {\cal R}_{\overline{r}}$, i.e., ${\cal X}_p$ is a minimum slim TP completion ${\cal T}$ of ${\cal F}^{\underline{1}}_{r'}$ for some $r' \in [r,\overline{r}]_{\cal R} \setminus S$. This case is considered in lines~\ref{lin:f1S} to~\ref{lin:saveComp}.
Because of the if condition of line~\ref{lin:save13}, we have to prove that we only need to save $\langle {\cal R}, {\cal T} \rangle$ in line~\ref{lin:saveComp} if $\frac{|{\cal R}|}{3} + |{\cal T}| \ge \frac{|{\cal F}|}{3}$. Suppose by contradiction that $|{\cal T}| < \frac{|{\cal F}| - |{\cal R}|}{3}$ and write ${\cal H} = \langle {\cal R}, {\cal W} \rangle$. Observe that the vertices of ${\cal T}$ form the biggest tree of ${\cal F} - {\cal R}$, which implies, by items~$(\ref{ite:oneThird})$ and~$(\ref{ite:hereditary})$ of Lemma~\ref{lem:basic},
that $\ell_{\cal W}(v) \le 2$. However, these cases have already been considered in line~\ref{lin:v}. Then, we can assume that $|{\cal T}| \ge \frac{|{\cal F}| - |{\cal R}|}{3}$ or equivalently that $\frac{|{\cal R}|}{3} + |{\cal T}| \ge \frac{|{\cal F}|}{3}$.

According to Claim~\ref{cla:atmost2}, we can discard the trees added to $\Gamma^2$ in line~\ref{lin:saveComp} having more vertices than the two smallest sizes. Now, consider two trees ${\cal T}$ and ${\cal T}'$ added to $\Gamma^2$ in line~\ref{lin:saveComp} such that $V({\cal T}) = V({\cal T}')$. Note that if $|T| > |T'|$, then
${\cal H}$ cannot be $\langle {\cal T}, {\cal T}'' \rangle$, because $\langle {\cal T}', {\cal T}'' \rangle$ has less fill edges than $\langle {\cal T}, {\cal T}'' \rangle$. The trees that cannot be part of a minimum solution by these observations are eliminated in lines~\ref{lin:G3aa} and~\ref{lin:G3b}.

\bigskip \noindent {\bf Case 2.2:} There is exactly one $i \in [p-1]$ such that ${\cal X}^i \in \Upsilon_2(G - {\cal H}_{h,\pi(x^i)},v)$.

We start this case proving that the proper ancestrals in ${\cal Z}$ of the vertex $b$ chosen in line~$\ref{lin:lastB2}$ do not need to be considered in the for loop beginning in line~\ref{lin:merge}. Let ${\cal Z}$, $b$ and ${\cal C} \in \Upsilon_2$ be chosen in lines~$\ref{lin:bestTop}$,~$\ref{lin:lastB2}$ and~$\ref{lin:T2}$ of a same iteration of the for loop beginning in line~$\ref{lin:bestTop}$, respectively. Therefore, it suffices to show that for any $z' \in [z,\pi_{{\cal F}}(b))$, it holds $|E(\langle {\cal Z}_{z,\pi(z')} , {\cal P} \rangle| \ge |E(U(G,v))|$ where $P$ is a minimum TP completion of $(G - V({\cal Z}_{z,\pi(z')}), S \cap V({\cal Z}_{z,\pi(z')}))$.
Write ${\cal B} = {\cal F} - {\cal Z}$ and ${\cal D} = {\cal F} - {\cal Z}_{z,z'}$. Note that $b = b^{\underline{1}}$ and that $|V(D^{\underline{2}})| < |V(B^{\underline{1}})|$. By Lemma~\ref{lem:basic}~$(\ref{ite:F2root})$, the number of fill edges of $\langle {\cal Z}_{z,\pi(z')} , {\cal P} \rangle$ that are not incident to $z'$ is smaller than $|V(D^{\underline{2}})|$. However, since there is exactly one $i \in [p-1]$ such that ${\cal X}^i \in \Upsilon_2(G - {\cal H}_{h,\pi(x^i)},v)$, there are $|V(B^{\underline{1}})|$ fill edges in $\langle {\cal Z}_{z,\pi(z')} , {\cal P} \rangle$ that are not incident in $z'$, a contradiction. Hence, in line~\ref{lin:merge}, we only need to consider the vertices $z^*$ belonging to $[\pi_{{\cal F}}(b), \overline{z}]_{\cal Z}$.

On the one hand, ${\cal X}_p = {\cal R}_{\overline{r}}$.
Then, we can write ${\cal H}_{\overline{r}} = \langle {\cal R}_{\overline{r}}, {\cal K} \rangle$ where ${\cal K}$ is a minimum TP completion of $G - V({\cal H}_{h,{\overline{r}}})$.
If $|{\cal R}| < \frac{|{\cal F}|}{3}$, then $|{\cal B}^{\underline{1}}| < \frac{|{\cal B}|}{3}$. Hence, by Lemma~\ref{lem:basic}~$(\ref{ite:oneThird})$, it holds that $\ell_{\cal K}(v) \le 2$. This case is covered in lines~\ref{lin:T2a} to~\ref{lin:QF2a}.
The case where $|{\cal R}| \ge \frac{|{\cal F}|}{3}$ is considered in lines~\ref{lin:lastB2} to~\ref{lin:QF2} using the tree ${\cal R}$ saved in line~\ref{lin:saveR}.

On the other hand, ${\cal X}_p \ne {\cal R}_{\overline{r}}$, i.e., ${\cal X}_p$ is a slim TP completion ${\cal T}$ of ${\cal F}^{\underline{1}}_{r'}$ for some $r' \in [r,\overline{r}]_{\cal R} \setminus S$. This case is considered in lines~\ref{lin:lastB2} to~\ref{lin:QF2} using the tree $\langle {\cal R}, {\cal T} \rangle$ saved in $\Gamma^2$ in line~\ref{lin:saveComp}.

\bigskip \noindent {\bf Case 2.3:} There are at least two $i \in [p-1]$ such that $X^i \in \Upsilon_2(G - {\cal H}_{h,\pi(x^i)},v)$.

Therefore, there are $y,y' \in [r,\overline{r}]_{\cal R}$ and $j,\ell \in [p-1]$ such that
${\cal H}_{h,x^\ell} = \langle {\cal X}^1$, $\ldots, {\cal X}^\ell \rangle = \langle {\cal R}_{r,\pi(y)}, {\cal X}^j , {\cal R}_{y,y'}, {\cal X}^\ell \rangle$,
$X^j \in \Upsilon_2(G - {\cal R}_{r,\pi(y)},v)$ and
$X^\ell \in \Upsilon_2(G - ({\cal R}_{r,y'} \cup X^j),v)$.
By Claim~\ref{cla:pivot}, we know that $|\langle {\cal R}_{r,\pi(y)} , {\cal X}^j \rangle| \ge \frac{|{\cal F}|}{3}$.
Furthermore, we claim that we can discard $\langle {\cal R}_{r,\pi(y)}, {\cal X}^j \rangle$
if $\pi(x^j) \ne \pi(y)$ and $|\langle {\cal R}_{r,\pi(\pi(y))}, {\cal X}^j \rangle| \ge \frac{|{\cal F}|}{3}$. Indeed, 
by the algorithm, we know that $|{\cal R}_{r,\pi(y)}| < \frac{|{\cal F}|}{3}$. Since $|\langle {\cal R}_{r,\pi(\pi(y))}, {\cal X}^j \rangle| \ge \frac{|{\cal F}|}{3}$, we have that $|{\cal X}^j| > |{\cal H}_\pi(y)|$, which implies that 
$|\langle {\cal R}_{r,\pi(\pi(y))}, {\cal X}^j, \ldots,$ ${\cal X}^k \rangle| \le |\langle {\cal R}_{r,\pi(y)}, {\cal X}^j, \ldots, {\cal X}^k \rangle|$.
These constraints are guaranteed by the if conditions in line~\ref{lin:F2minlevel}.

Finally, every tree saved in $\Gamma^1$ in line~\ref{lin:RF2} in considered in the for loop beginning in line~\ref{lin:M}. If ${\cal M}^1$ satisfies the if conditions of line~\ref{lin:2parents}, then, by Claim~\ref{cla:2ormore}, the possibilities for ${\cal M}^1$ are covered in for loop beginning in line~\ref{lin:G1a}. Otherwise, a recursive call for ${\cal M}^i$ is made in line~\ref{lin:G1b}.
\end{proof}

\begin{theorem} \label{thm:complexity}
Algorithm~$\ref{alg:MinTPC}$ runs in $O(n^7)$ steps where $n$ is the order of the input graph.
\end{theorem}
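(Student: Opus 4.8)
The plan is to set up and solve a recurrence for $T(n)$, the worst-case number of steps of Algorithm~\ref{alg:MinTPC} on a graph of order $n$, separating the work into the non-recursive cost of a single invocation and the cost of the recursive calls it triggers. Throughout I write $F = G - v$, so $|{\cal F}| = n-1$, and I assume the graphs are manipulated through their rooted-forest representations, on which $U(\cdot,v)$, a leading merge, and the construction of $\Upsilon_1,\Upsilon_2$ each cost $O(n)$.

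First I would bound the per-invocation non-recursive cost by $O(n^3)$. By Lemma~\ref{lem:findtop} each call to {\sc FindTop} costs $O(n)$, and by Lemma~\ref{lem:slim} each call to {\sc SlimTPC} costs $O(n^2)$. The only loops of unbounded length are the while loop of line~\ref{lin:while}, which runs $\ell_{{\cal F}^{\underline{1}}}(\overline r) = O(n)$ times with a body dominated by the {\sc SlimTPC} of line~\ref{lin:compF1}, costing $O(n^2)$ per iteration and $O(n^3)$ in total; the loop over $\Gamma^1$ of line~\ref{lin:M}, whose non-recursive branch (line~\ref{lin:G1a}) sweeps an interval of $O(n)$ vertices doing $O(n)$ work per vertex; and the analogous interval sweeps elsewhere. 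Every other loop ranges over $\Upsilon_1,\Upsilon_2,\Gamma,\Gamma^2,\Gamma^3,\Gamma^4$, each of $O(1)$ relevant size. Summing these contributions gives $O(n^3)$ exclusive of the recursive calls.

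Next I would control the recursion. The crucial point is that only a constant number of recursive calls survive, and each is made on an instance that is a fixed fraction smaller. The branching is bounded by the structural claims: Claim~\ref{cla:2ormore} shows that among the trees of $\Gamma^1$ only those realizing one of the two largest values of $|{\cal B}^i|$ can force a recursive call in line~\ref{lin:G1b}, every other one being resolved by a universal completion in line~\ref{lin:G1a}; Claim~\ref{cla:atmost2} keeps only the two smallest trees added to $\Gamma^2$; and $\Gamma,\Upsilon_1,\Upsilon_2$ already have $O(1)$ members, so across lines~\ref{lin:G1G2small},~\ref{lin:rec23},~\ref{lin:recR} and~\ref{lin:QF2} the number of recursive calls is $O(1)$. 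The shrink is enforced by the guards together with Lemma~\ref{lem:basic}~$(\ref{ite:oneThird})$: a call is made only after checking that the removed leading portion has at least $\tfrac{|{\cal F}|}{6}$ vertices (line~\ref{lin:if56}) or at least $\tfrac{|{\cal F}|}{3}$ vertices (lines~\ref{lin:if23},~\ref{lin:testR} and~\ref{lin:F2minlevel}), so each recursive instance has order at most $\tfrac{5}{6}n$.

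Finally I would assemble the recurrence $T(n) \le a\,T\!\left(\tfrac{5}{6}n\right) + O(n^3)$ with $a = O(1)$, noting that along the genuinely recursive branches the instance shrinks geometrically so the recursion depth is $O(\log n)$; unwinding the recurrence then yields a polynomial bound whose degree is governed by the branching constant and the shrink factor, and the threshold tests together with Claims~\ref{cla:2ormore} and~\ref{cla:atmost2} are calibrated precisely so that this degree does not exceed $7$, giving $T(n) = O(n^7)$. I expect the main obstacle to be exactly this last accounting: one must verify case by case, treating the disjoint cases $|{\cal F}^{\underline{1}}| < \tfrac{2|{\cal F}|}{3}$ and $|{\cal F}^{\underline{1}}| \ge \tfrac{2|{\cal F}|}{3}$ separately, that after the pruning of Claims~\ref{cla:2ormore} and~\ref{cla:atmost2} the surviving number of calls and their sizes combine to keep the exponent at $7$ rather than larger, since a careless count of the calls spawned by the double loop over $\Upsilon_1 \times \Upsilon_2$ would inflate the bound and must be shown not to compound.
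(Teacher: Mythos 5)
There is a genuine gap, and it sits exactly where you admit the ``main obstacle'' lies: your recurrence $T(n) \le a\,T\!\left(\tfrac{5}{6}n\right) + O(n^3)$ with an unspecified constant $a$ simply does not imply $T(n) = O(n^7)$. By the Master Theorem, that recurrence gives $T(n) = O\!\left(n^{\log_{6/5} a}\right)$, and $\log_{6/5} a \le 7$ forces $a \le (6/5)^7 \approx 3.58$, i.e.\ at most $3$ recursive calls. But the algorithm makes many more: the paper counts up to $2+6=8$ calls in the case $|{\cal F}^{\underline{1}}| < \tfrac{2|{\cal F}|}{3}$ and up to $13$ calls in the case $|{\cal F}^{\underline{1}}| \ge \tfrac{2|{\cal F}|}{3}$. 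With $a = 13$ your recurrence yields only $O\!\left(n^{\log_{6/5} 13}\right) \approx O(n^{14})$, which is far weaker than the claimed bound. The theorem is saved only by the fact that the two shrink factors are \emph{not} interchangeable: the paper shows that only the (at most $2$) calls of line~\ref{lin:G1G2small} can be as large as $\tfrac{5n}{6}$, while all other calls (at most $6$ in the first case, at most $13$ in the second) are bounded by $\tfrac{2n}{3}$. This gives the mixed recurrence $T_1(n) \le 2T_1(\tfrac{5n}{6}) + 6T_1(\tfrac{2n}{3}) + O(n^6)$, closed by the explicit check $2\left(\tfrac{5}{6}\right)^7 + 6\left(\tfrac{2}{3}\right)^7 < \tfrac{91}{100} < 1$, and $T_2(n) \le 13\,T_2(\tfrac{2n}{3}) + O(n^5)$, closed by $\log_{3/2} 13 \approx 6.33 < 7$. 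By collapsing everything to the single worst shrink factor $\tfrac{5}{6}$, you discarded precisely the distinction on which the exponent $7$ depends.

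The counting of branches is also not something you can wave at with ``$O(1)$'': the proof must pin down the constants per case and per line (e.g.\ $|\Gamma| \le 6$ when line~\ref{lin:rec23} is reached, because lines~\ref{lin:G1G1G2}, \ref{lin:G1G2big}, \ref{lin:G11comp1} and~\ref{lin:G11comp2} contribute at most $2+2+1+1$ trees; at most $4$ calls in line~\ref{lin:G1b} via Claim~\ref{cla:2ormore} and $|\Upsilon_2| \le 2$; at most $3 \times 3 = 9$ calls in lines~\ref{lin:recR} and~\ref{lin:QF2} since $|\Gamma| \le 3$ there), because these exact numbers are what enter the arithmetic above. Your structural instincts — which claims prune the branching, which guards enforce the shrink — are correct and match the paper's, but asserting that the thresholds are ``calibrated precisely so that this degree does not exceed $7$'' is a restatement of the theorem, not a proof of it. To repair the argument you must (i) keep two separate recurrences, one per case of the algorithm, (ii) record for each recursive line both its branch count and its own shrink factor, and (iii) carry out the Master Theorem computation and the induction with those numbers.
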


\begin{proof}
The algorithm is divided into two cases, one is when $|{\cal F}^{\underline{1}}| < \frac{2|{\cal F}|}{3}$ and the other is when $|{\cal F}^{\underline{1}}| \ge \frac{2|{\cal F}|}{3}$. Then, we can express the number of steps of the algorithm by $T(n) = T_1(n) + T_2(n)$, where $T_1(n)$ is associated with the first case and $T_2(n)$ with the second case. The algorithm has five lines with recursive calls, namely, lines~\ref{lin:G1G2small} and~\ref{lin:rec23} for $|{\cal F}^{\underline{1}}| < \frac{2|{\cal F}|}{3}$, and lines~\ref{lin:G1b},~\ref{lin:recR}, and~\ref{lin:QF2} for $|{\cal F}^{\underline{1}}| \ge \frac{2|{\cal F}|}{3}$. 

For the case $|{\cal F}^{\underline{1}}| < \frac{2|{\cal F}|}{3}$, we will show that
$T_1(n) \le 2T_1(\frac{5n}{6}) + 6 T_1(\frac{2n}{3})$.
The input graph of each recursive call in line~\ref{lin:G1G2small} has order at most $\frac{5|{\cal G}|}{6}$ because it is checked in line~\ref{lin:if56} if $|{\cal L}| \ge \frac{|{\cal F}|}{6}$. We have to show that line~\ref{lin:G1G2small} is executed at most twice. The tree ${\cal L}$ considered in line~\ref{lin:G1G2small} is formed by one member of $\Upsilon_1$ and one member of $\Upsilon_2$. We know that 
$|\Upsilon_1| \le 2$ and $|\Upsilon_2| \le 2$. However, if $|\Upsilon_1| = 2$, then there is a tree in $\Upsilon_1$ with the same order as ${\cal F}^{\underline{1}}$. Then, for this tree, line~\ref{lin:G1G2big} is executed instead of line~\ref{lin:G1G2small}, which means that line~\ref{lin:G1G2small} is executed at most twice.
The input graph of each recursive call in line~\ref{lin:rec23} has order at most $\frac{2|{\cal F}|}{3}$ because it is checked in line~\ref{lin:if23} if $|{\cal C}| \ge \frac{|{\cal F}|}{3}$.
Line~\ref{lin:rec23} is executed at most 6 times because $|\Gamma| \le 6$ when this line is reached. This is true because
at most two trees are added to it in line~\ref{lin:G1G1G2} because $|\Upsilon_2| \le 2$,
at most two in line~\ref{lin:G1G2big} because at most one member of $\Upsilon_1$ has ${\cal F}^{\underline{1}}$ vertices,
at most one in line~\ref{lin:G11comp1} and
at most one in line~\ref{lin:G11comp2}.

For the case $|{\cal F}^{\underline{1}}| \ge \frac{2|{\cal F}|}{3}$, we show that $T_2(n) \le 13 T_2(\frac{2n}{3})$.
The order of the input graph of each recursive call in line~\ref{lin:G1b} is clearly at most $\frac{2|{\cal F}|}{3}$ because it is checked in the if conditions of line~\ref{lin:F2minlevel} if $|\langle {\cal R}, {\cal J}\rangle| \ge \frac{|{\cal F}|}{3}$. The order of the input graph of every recursive call in lines~\ref{lin:recR} and~\ref{lin:QF2} is at most $\frac{2|{\cal F}|}{3}$ because it is checked in the if conditions of lines~\ref{lin:save13} and~\ref{lin:testR}, respectively, if the tree has at least $\frac{|{\cal F}|}{3}$ before it be added to $\Gamma$.
It remains to show that the number of recursive calls for this case is at most~$13$.

Denoting by ${\cal B}_{(j)}$ the instance of ${\cal B}$ in line~$\ref{lin:Y2}$ of the $j$-th iteration of the while loop, it holds that $|{\cal B}^{\overline{2}}_{(i_1)}| < |{\cal B}^{\overline{2}}_{(i_2)}|$ if $i_1 < i_2$. Therefore, the if condition of line~\ref{lin:2parents} and the fact that $|\Upsilon_2| \le 2$ for any instance of $\Upsilon$ imply that the number of recursive calls in line~\ref{lin:G1b} is at most 4.

At most two trees added to family $\Gamma^2$ in line~\ref{lin:saveComp} are saved in $\Gamma$ in lines~\ref{lin:G3a} to~\ref{lin:G3b}.
Since one more tree can be added to $\Gamma$ in line~\ref{lin:saveR}, the family $\Gamma$ has at most~$3$ elements when line~\ref{lin:bestTop} is reached. For each member of $\Gamma$, at most~$3$ recursive calls are done in lines~\ref{lin:recR} and~\ref{lin:QF2}, which result in at most $9$ for this subcase. Then, the total number of recursive calls of this case is at most~$13$.

Using Lemma~\ref{lem:mincompaction}, it is easy to see that the remaining operations of the algorithm cost $O(n^5)$. Using the Master Theorem~\cite{Cormen}, we conclude that
$T_2(n) = O(n^{\log_{\frac{3}{2}} 13}) = O(n^7)$.
We use induction on $n$ to prove that $T_1(n) < n^7$ for $n$ sufficiently large.
$T_1(n) \le 6 T_1(\frac{2n}{3}) + 2 T_1(\frac{5n}{6}) + n^6 \le 6 (\frac{2n}{3})^7 + 2 (\frac{5n}{6})^7 + n^6 < \frac{91}{100}n^7 + n^6 < n^7$. Therefore $T(n) = O(n^7)$.
\end{proof}

\bibliographystyle{plain}

\end{document}